\theoremstyle{plain}
\newtheorem{theorem}{Theorem}[section]
\newtheorem{proposition}[theorem]{Proposition}
\newtheorem{Proposition}[theorem]{Proposition}
\newtheorem{lemma}[theorem]{Lemma}
\newtheorem{Lemma}[theorem]{Lemma}
\newtheorem{corollary}[theorem]{Corollary}
\theoremstyle{remark}
\newtheorem{construction}[theorem]{Construction}
\newtheorem{notation}[theorem]{Notation}
\newtheorem{Notations}[theorem]{Notation}
\newtheorem{example}[theorem]{Example}
\newtheorem{Example}[theorem]{Example}
\newtheorem{convention}[theorem]{Convention}
\newtheorem{remark}[theorem]{Remark}
\newtheorem{Remark}[theorem]{Remark}
\newtheorem{alg}[theorem]{Algorithm}
\theoremstyle{definition}
\newtheorem{definition}[theorem]{Definition}
\newtheorem{defn}[theorem]{Definition}
\newcommand{\NN}{{\mathbb N}}
\newcommand{\N}{\NN}
\newcommand{\PP}{{\mathbb P}}
\newcommand{\RR}{{\mathbb R}}
\newcommand{\R}{\RR}
\newcommand{\ZZ}{{\mathbb Z}}
\newcommand{\Z}{\ZZ}
\newcommand{\calF}{{\mathcal F}}
\newcommand{\calP}{{\mathcal P}}
\newcommand{\one}{{\mathbf 1}}
\newcommand{\gen}[1]{\langle#1\rangle}
\newcommand{\hooklongrightarrow}{\lhook\joinrel\longrightarrow}
\DeclareMathOperator{\cl}{cl}
\DeclareMathOperator{\ch}{char}
\DeclareMathOperator{\New}{Newt}
\DeclareMathOperator{\Newt}{Newt}
\DeclareMathOperator{\trop}{trop}
\DeclareMathOperator{\Trop}{Trop}
\DeclareMathOperator{\Spec}{Spec}
\DeclareMathOperator{\Proj}{Proj}
\DeclareMathOperator{\inom}{in}
\DeclareMathOperator{\conv}{conv}
\DeclareMathOperator{\cone}{cone}
\DeclareMathOperator{\Span}{span}
\DeclareMathOperator{\real}{Real}
\DeclareMathOperator{\realdim}{realdim}
\newcommand{\ie}{i.\,e.\ }
\newcommand{\eg}{e.\,g.\ }
\newcommand{\cf}{cf.\ }
\renewcommand{\P}{P}
\newcommand{\mult}{m}
\newlength {\sidetextwidth}
\newlength {\sidepicwidth}
\newsavebox {\sidepicbox}
\newenvironment {sidepic}[1]{%
  \par%
  \def\message##1{}%
  \hbadness=10000%
  \setlength{\sidetextwidth}{\textwidth}%
  \savebox{\sidepicbox}{\input{pics/#1}}%
  \settowidth{\sidepicwidth}{\usebox{\sidepicbox}}%
  \addtolength{\sidetextwidth}{-6mm}%
  \addtolength{\sidetextwidth}{-\sidepicwidth}%
  \begin {minipage}{\sidetextwidth}%
  \parskip1ex plus0.5ex minus0.1ex}%
  {\end {minipage}%
  \hfuzz=10000pt%
  \hspace {4mm}%
  \begin {minipage}{\sidepicwidth}\usebox{\sidepicbox}\end {minipage}%
  \hfuzz=0pt%
  \hbadness=1000%
  }
\renewenvironment {enumerate}%
  {\begin {oldenumerate}\parskip1ex plus0.5ex \itemsep 0mm \parindent 0mm}%
  {\end {oldenumerate}}
\renewenvironment {itemize}%
  {\begin {olditemize}\parskip1ex plus0.5ex \itemsep 0mm \parindent 0mm}%
  {\end {olditemize}}
\begin{document}
  \title{The Realizability of Curves in a Tropical Plane}

\author{Andreas Gathmann \and Kirsten Schmitz \and Anna Lena Winstel}

\address{Andreas Gathmann, Fachbereich Mathematik, Technische Universität
  Kaiserslautern, Postfach 3049, 67653 Kaiserslautern, Germany}
  \email{andreas@mathematik.uni-kl.de}
\address{Kirsten Schmitz, Fachbereich Mathematik, Technische Universität
  Kaiserslautern, Postfach 3049, 67653 Kaiserslautern, Germany}
  \email{schmitz@mathematik.uni-kl.de}
\address{Anna Lena Winstel, Fachbereich Mathematik, Technische Universität
  Kaiserslautern, Postfach 3049, 67653 Kaiserslautern, Germany}
  \email{winstel@mathematik.uni-kl.de}

\thanks{\emph {2010 Mathematics Subject Classification:} 14T05}
\thanks{Kirsten Schmitz has been supported by the DFG grant Ga 636/3}
\keywords{Tropical geometry, tropicalization, tropical realizability}
  
\begin{abstract}
  Let $E$ be a plane in an algebraic torus over an algebraically closed field.
  Given a balanced $1$-dimensional fan $C$ in the tropicalization of $E$, \ie
  in the Bergman fan of the corresponding matroid, we give a complete
  algorithmic answer to the question whether or not $C$ can be realized as the
  tropicalization of an algebraic curve contained in $E$. Moreover, in the case
  of realizability the algorithm also determines the dimension of the moduli
  space of all algebraic curves in $E$ tropicalizing to $C$, a concrete simple
  example of such a curve, and whether $C$ can also be realized by an
  irreducible algebraic curve in $E$. In the first important case when $E$ is a
  general plane in a $3$-dimensional torus we also use our algorithm to prove
  some general criteria for $C$ that imply its realizability resp.\
  non-realizability. They include and generalize the main known obstructions by
  Brugallé-Shaw and Bogart-Katz coming from tropical intersection theory.
\end{abstract}

\maketitle

  \section{Introduction} \label{sec-introduction}

Tropical geometry is a branch of mathematics that establishes a deep connection
between algebraic geometry and combinatorics. For example, given a
$k$-dimensional subvariety $Y$ of an $n$-dimensional algebraic torus $X$ over
an algebraically closed field, the process of tropicalization assigns to it a
purely $k$-dimensional polyhedral fan $ \trop(Y) $ in an $n$-dimensional real
vector space, together with a positive integer multiplicity assigned to each
facet \cite{SP}. Although this fan is in a certain sense a simpler object than
the original variety $Y$, it still carries much information about $Y$. It is
therefore the idea of tropical geometry to study these fans by combinatorial
methods, and then transfer the results back to algebraic geometry.

In order for this strategy to work efficiently it is of course essential to
know which fans can actually occur as tropicalizations of algebraic varieties
--- this is usually called the \emph{realization problem} or \emph{tropical
inverse problem}. An important well-known necessary condition for a fan
together with given multiplicities on the facets to be realizable as the
tropicalization of an algebraic variety is the so-called \emph{balancing
condition}, certain linear relations among the multiplicities of the adjacent
cells of each codimension-$1$ cone \cite[Section 2.5]{SP}. In the case of fans
of dimension or codimension $1$ this condition is also sufficient for
realizability by an algebraic curve resp.\ hypersurface
\cite{NS06,SP,Spe07,MI}, but for intermediate dimensions no such general
statements are known so far.

Rather than considering varieties of intermediate dimension, we will restrict
ourselves in this paper to the case of curves and study a relative version of
the realization problem: let $E$ be a fixed plane in $X$, \ie a $2$-dimensional
subvariety of an algebraic torus defined by linear equations. Its
tropicalization $ \trop(E) $ is the $2$-dimensional Bergman fan of the
corresponding matroid \cite{Stu02,AK}. Given a balanced $1$-dimensional fan $C$
with rays in the support of $ \trop(E) $ --- in the following we will call this
a \emph{tropical curve} in $ \trop(E) $ --- the \emph{relative realization
problem} then is to decide whether there is a (maybe reducible) algebraic curve
$Y$ in $E$ that tropicalizes to $C$. Results in this direction are useful if
one wants to use tropical methods to analyze the geometry of algebraic curves
in (a toric compactification of) $E$, \eg for setting up moduli spaces of such
curves or studying the cone of effective curve classes.

\vspace{1ex}

\begin{sidepic}{intro-1}
  The first important example of this situation is that of a general plane $E$
  in a $3$-dimensional torus $X$. In this case (the support of) $ \trop(E) $
  will be denoted by $ L^3_2 $; it is the union of all cones generated by two
  of the classes $ [e_0],\dots,[e_3] $ of the unit vectors in $ \RR^4 /
  \gen\one $, where $ \one = (1,1,1,1) $. The picture on the right shows this
  space, together with an example of a tropical curve $C$ in it. Its rays all
  have multiplicity $1$ and are spanned by the vectors (in homogeneous
  coordinates)
    \[ [0,3,1,0],\, [0,0,1,3],\, [2,0,1,0],\, \text{and } [1,0,0,0]. \]
\end{sidepic}

Note that the balancing condition in this case just means that these four
vectors add up to $0$ in $ \RR^4 /\gen\one $. As the above representatives
(normalized so that their minimal coordinate is $0$) sum up to $ (3,3,3,3) $
any algebraic curve realizing $C$ must have degree $3$ (see Example
\ref{ex-degc} and Lemma \ref{lem-tropdeg}). We are thus asking if there is a
cubic curve in $E$ tropicalizing to $C$.

Several necessary conditions for this relative realizability have been known so
far, all of them coming from the comparison of tropical and classical
intersection theory. The strongest obstruction seems to be that of Brugallé and
Shaw, stating that an irreducible tropical curve $C$ in $ \trop(E) $ cannot be
realizable if it has a negative intersection product with another realizable
irreducible tropical curve $ D \neq C $ \cite[Corollary 3.10]{brugalleshaw},
\eg if $D$ is one of the three straight lines contained in $ L^3_2 $. They
also prove obstructions coming from the adjunction formula and intersection
with the Hessian \cite[Sections 4 and 5]{brugalleshaw}. In addition, Bogart and
Katz have shown that a tropical curve in $E$ contained in a classical
hyperplane can only be realizable if it contains a classical line or is a
multiple of the tropical intersection product of $E$ with this hyperplane
\cite[Proposition 1.3]{bogartkatz}. However, none of these criteria are also
sufficient for realizability. They all fail to detect some of the
non-realizable curves --- \eg the curve $C$ in $ L^3_2 $ in the picture above,
which actually turns out to be non-realizable by an algebraic curve in $E$ (see
Proposition \ref{prop:commonray} and Example \ref{ex-table}).

In this paper we will take a different approach to the relative realization
problem. It is of an algorithmic nature, and thus first of all leads to an
efficient way to decide for any given tropical curve $C$ in $ \trop(E) $
whether or not it is realizable by an algebraic curve in $E$. After recalling
the basic tropical background in section \ref{sec-tropgeom}, we then show in
sections \ref{sec-projections} and \ref{sec-realize} that checking whether the
tropicalization of an algebraic curve is equal to $C$ is equivalent to checking
that the projections of the curve to the various coordinate planes tropicalize
to the corresponding projections of $C$ to $ \RR^3 / \gen\one $. As these
checks are now in the plane, they can easily be performed explicitly by
comparing Newton polytopes. The resulting Algorithm \ref{algorithm} to decide
for relative realizability is also available for download as a Singular library
\cite{Sing,Win12}. It can distinguish between realizability by a reducible and
by an irreducible curve, compute the dimension of the space of algebraic curves
tropicalizing to $C$ (which in fact is an open subset of a linear space), and
provide an explicit easy example of such an algebraic curve in case of
realizability. The computations can be performed for ground fields of any
characteristic, and in fact the results will in general depend on this choice
(see Example \ref{ex-char}).

From the numeric results of these computations it seems unlikely that there is
a general easy rule to decide for realizability in any given case. However, by
a systematic study of the algorithm we prove some criteria in section
\ref{sec-criteria} that imply realizability resp.\ non-realizability in many
cases of interest. In the case of $ L^3_2 $ they include and generalize the
main previously known obstructions by Brugallé-Shaw and Bogart-Katz mentioned
above, thus putting them into a common framework with a unified idea of proof
(see Propositions \ref{prop:intprod} and \ref{prop:bogartkatz}). In addition,
our criteria show that every tropical curve in $ \trop(E) $ can be realized by
an algebraic cycle in $E$, \ie by a formal $ \ZZ $-linear combination of
algebraic curves in $E$ (see Proposition \ref{prop-realize-cycle}).

\begin{sidepic}{intro-2}
  One example of a new obstruction to realizability in the case of tropical
  curves in $ L^3_2 $ is shown in the picture on the right: a tropical curve
  that is completely contained in the shaded area cannot be realizable by an
  algebraic curve in $E$ if its multiplicity on the ray $ [e_0] $ is $1$ ---
  regardless of the characteristic of the ground field (see Proposition
  \ref{prop:commonray}). This shows \eg the non-realizability of the example
  curve that we had considered in the picture above.
\end{sidepic}

The following numbers may be useful to get a feeling for the numerical
complexity of the problem: there are (up to coordinate permutations) $182$
tropical curves of degree $3$ and $2122$ curves of degree $4$ in $ L^3_2 $. In
characteristic zero, $17$ of the degree $3$-curves and $138$ of the degree-$4$
curves are not realizable. Checking the realizability of all these curves takes
less than one minute on a standard PC. In degree $3$ our general criteria
suffice to find all non-realizable curves, whereas $21$ of the $138$
non-realizable curves remain undetected by these obstructions in degree $4$
(see Example \ref{ex-table}).

It should be noted that the methods of this paper are quite general and can
also be applied \eg to the ``non-constant coefficient case'', \ie to the
question which $1$-dimensional balanced polyhedral complexes in $ \trop(E) $
can be realized as the tropicalization of an algebraic curve in $E$ over a
non-Archimedean valued field. Work in this direction is in progress.

  \section{Tropical Geometry} \label{sec-tropgeom}

We will start by recalling the basic combinatorial concepts from tropical
geometry used in this paper. More details can be found \eg in \cite{AR}.

\begin{notation}[Tropical cycles] \label{not-cycles}
  Let $n\in \N$, let $ \Lambda $ be a lattice of rank $n$, and let $ V =
  \Lambda \otimes_\ZZ \RR$ be the corresponding real vector space. By a
  \emph{cone} $ \sigma $ in $V$ we will always mean a rational polyhedral cone.
  Let $ V_\sigma \subset V $ be the vector space spanned by $ \sigma $, and $
  \Lambda_\sigma := V_\sigma \cap \Lambda $. If the cone $ \tau $ is a face of
  $ \sigma $ of codimension $1$, we denote by $ u_{\sigma/\tau} \in
  \Lambda_\sigma / \Lambda_\tau $ the primitive normal vector of $ \sigma $
  modulo $ \tau $. In the case $ \dim \sigma = 1 $ we write $ u_{\sigma/\{0\}}
  \in \Lambda_\sigma \subset \Lambda $ also as $ u_\sigma $.

  For $ r \in \NN $ an \emph{$r$-dimensional (tropical) cycle} or
  \emph{$r$-cycle} in $V$ is a pure $r$-dimensional fan $C$ of cones in $V$ as
  above, together with a \emph{multiplicity} $ m_C(\sigma) \in \ZZ $ for each
  maximal cone $ \sigma \in C $, and such that the \emph{balancing condition}
    \[ \sum_{\sigma > \tau} m_C(\sigma) \, u_{\sigma/\tau} = 0
       \quad \in V/V_\tau \]
  holds for each $ (r-1) $-dimensional cone $ \tau \in C $ (where the sum is
  taken over all maximal cones $ \sigma $ containing $ \tau $ as a face). If
  there is no risk of confusion, we will also write $ m(\sigma) $ instead of $
  m_C(\sigma) $. A tropical cycle with only non-negative multiplicities will be
  called a \emph{tropical variety}, resp.\ a \emph{tropical curve} if $ r=1 $.

  The \emph{support} $ |C| \subset V $ of a tropical cycle $C$ is the union of
  its maximal cones that have non-zero multiplicity. If $D$ is another tropical
  cycle in $V$ with $ |D| \subset |C| $ we say that $D$ is contained in $C$,
  and also write this as $ D \subset C $ by abuse of notation. The abelian
  group of all $k$-dimensional cycles contained in $C$, modulo refinements as
  in \cite[Definition 2.12]{AR}, will be denoted by $ Z_k^{\trop}(C) $.
\end{notation}

\begin{construction}[Intersection products]
    \label{constr-intersection}
  A \emph{rational function} on a $k$-dimensional cycle $C$ is a continuous
  piecewise integer linear function $ \varphi: |C|\rightarrow \mathbb{R} $,
  where we will assume the fan structure of $C$ to be fine enough so that $
  \varphi $ is linear on each cone $ \sigma $, see \cite[Definition 3.1]{AR}.
  This linear function, extended uniquely to $ V_\sigma $, will be denoted $
  \varphi_\sigma $. We then define the \emph{intersection product} $ \varphi
  \cdot C \in Z_{k-1}^{\trop}(C) $ to be the cycle whose maximal cones are the
  $ (k-1) $-dimensional cones $ \tau $ of $C$ with multiplicities
    \[ m_{\varphi \cdot C}(\tau) = \varphi_\tau
         \left( \sum_{\sigma>\tau} m_C(\sigma) \, v_{\sigma/\tau} \right)
         - \sum_{\sigma>\tau} m_C(\sigma)\,\varphi_\sigma(v_{\sigma/\tau}), \]
  where the sum is taken over all $k$-dimensional cones $ \sigma $ in $C$
  containing $ \tau $ as a face, and the vectors $ v_{\sigma/\tau} $ are
  arbitrary representatives of $ u_{\sigma/\tau} $ \cite[Definition 3.4]{AR}.
  Its support is contained in the locus of points at which $ \varphi $ is not
  locally linear.

  If $ \varphi \cdot V = D $ we say that the rational function $ \varphi $ cuts
  out $D$, and write the intersection product $ \varphi \cdot C $ also as $ D
  \cdot C $. This intersection product of a codimension-$1$ cycle $D$ with $C$
  is well-defined (\ie independent of the rational function cutting out $D$),
  and satisfies the expected properties as \eg commutativity if $C$ can also be
  cut out by a rational function \cite[Section 9]{AR}.
\end{construction}

\begin{construction}[Push-forward of cycles] \label{constr-push}
  Let $ f: \Lambda \to \Lambda' $ be a linear map of lattices. By abuse of
  notation, the corresponding linear map of vector spaces $ V = \Lambda
  \otimes_\ZZ \RR \to V' = \Lambda' \otimes_\ZZ \RR $ will also be denoted by
  $f$. For $ C \in Z_k^{\trop}(V) $ there is then an associated
  \emph{push-forward} cycle $ f_*(C) \in Z_k^{\trop}(V') $ obtained as follows:
  subdivide $C$ so that the collection of cones $ \{ f(\sigma): \sigma \in C \}
  $ is a fan in $V'$, and associate to each such image cone $ \tau $ of
  dimension $k$ the multiplicity
    \[ m_{f_* C}(\tau) = \sum_{\sigma: f(\sigma) = \tau} m_C(\sigma) \cdot
       [\Lambda'_\tau : f(\Lambda_\sigma)]. \]
  This way one indeed obtains a balanced cycle, and the corresponding
  push-forward map $ f_*: Z_k^{\trop}(V) \to Z_k^{\trop}(V') $ is a
  homomorphism that satisfies all expected properties as \eg the projection
  formula \cite[Section 4]{AR}.
\end{construction}

\begin{convention}[Homogeneous coordinates] \label{conv-homo}
  In the following, we will always work with real vector spaces that have fixed
  homogeneous coordinates, \ie we have $ V = \RR^N/\gen\one $ for a finite
  index set $N$, where $ \one $ denotes the vector all of whose coordinates are
  equal to $1$. It is then always understood that the underlying lattice is $
  \ZZ^N/\gen\one $. The class of a vector $ v \in \RR^N $ in $ \RR^N/\gen\one $
  will be written $ [v] $; for $ i \in N $ the unit vector in $ \ZZ^N $ with
  entry $1$ in the coordinate $i$ is denoted by $ e_i $. Often we will just
  have $ N=\{0,\dots,n\} $, in which case we write $V$ as $ \RR^{n+1}/\gen\one
  $ with lattice $ \ZZ^{n+1}/\gen\one $.

  The reason for this choice is that these are the natural ambient spaces for
  \emph{matroid fans} --- tropical varieties that will be central in this paper
  as they occur as tropicalizations of linear spaces \cite[Theorem 1]{AK}.
  Let us now introduce these matroid fans from a combinatorial point of view.
  Details on matroid theory can be found in \cite{Oxl}.
\end{convention}

\begin{construction}[Matroid fans] \label{constr-matroid}
  Let $M$ be a loop-free matroid on a finite ground set $N$. By a \emph{chain
  of flats} (of length $m$) in $M$ we will mean a sequence $ \calF =
  (F_1,\dots,F_m) $ of flats of $M$ with
    \[ \emptyset \subsetneq F_1 \subsetneq F_2 \subsetneq \cdots \subsetneq F_m
       \subsetneq N. \]
  For such a chain of flats let $ \sigma_\calF \subset \RR^N/\gen\one $ be the
  $m$-dimensional simplicial cone generated by the classes of the vectors $
  v_{F_1},\dots,v_{F_m} $, where $ v_F \in \RR^N $ for a flat $F$ denotes the
  vector with entries $1$ in the coordinates of $F$, and $0$ otherwise. One can
  show that the collection of all cones $ \sigma_\calF $ corresponding to
  chains of flats in $M$, with multiplicity $1$ assigned to each maximal cone,
  is a tropical variety of dimension equal to the rank of $M$ minus $1$
  \cite[Proposition 3.1.10]{Fra12}. It is called the \emph{matroid fan} or
  \emph{Bergman fan} associated to $M$ and denoted by $ B(M) $.
\end{construction}

\begin{sidepic}{ex-l32} \vspace{-6pt} \begin{example}[General linear spaces
    $ L^n_k $] \label{ex-matroid}
  Let $ n,k \in \NN $ with $ k \le n $, and let $M$ be the uniform matroid
  of rank $ k+1 $ on $ N = \{0,\dots,n\} $. Then the matroid fan $ B(M) $
  consists of the cones spanned by the vectors $ [v_{F_1}],\dots,[v_{F_m}] $
  for all sequences $ \emptyset \subsetneq F_1 \subsetneq \cdots \subsetneq F_m
  \subsetneq N $ with $ |F_m| \le k $. We denote it by $ L^n_k $; the picture
  on the right shows the case of $ L^3_2 $ in $ \RR^4/\gen\one $ (see also
  Example \ref{ex-rank}).

  In the following we will consider tropical cycles only up to refinements.
  Hence, we will often draw $ L^3_2 $ without the subdivision induced by the
  rank-$2$ flats.
\end{example} \end{sidepic}

Our main tropical objects in this paper will be tropical curves in matroid
fans. So let us now introduce some convenient notations to deal with such
curves.

\begin{notation}[Description of a curve $C$ with the set $ \P(C) $]
    \label{notation-pofc}
  For a tropical curve $C$ in $ \RR^N / \gen\one $ we will always assume that
  it is subdivided so that the origin is a cone of $C$. If $ \sigma_1,\dots,
  \sigma_k $ are the $1$-dimensional cones of $C$, we set
    \[ \P(C) := \{ m(\sigma_1) \, v_1,\dots, m(\sigma_k) \, v_k \}
       \quad \subset \ZZ^N, \]
  where $ v_i \in \ZZ^N $ for $ i=1,\dots,k $ is the unique representative of
  the primitive normal vector $ u_{\sigma_i} \in \ZZ^n / \gen\one $ such that
  the minimum over all its coordinates is $0$. For $ v \in \ZZ^N $ denote by $
  \gcd(v) $ the (non-negative) greatest common divisor of the coordinates of
  $v$. Then $ \gcd(v_i) = 1 $, and so for all $i$ we have $ \gcd(m(\sigma_i) \,
  v_i) = m(\sigma_i) $ and $ [m(\sigma_i) \, v_i] \in \sigma_i $. This means
  that the set $ \P(C) $ allows to reconstruct the curve $C$ uniquely, and thus
  is a convenient way to describe curves in $ \RR^N / \gen\one $. By abuse of
  notation, we will write the multiplicity $ m(\sigma_i) $ also as $ m(v_i) $
  or $ m([v_i]) $.
\end{notation}

We will now introduce the degree of a tropical $1$-cycle and show that the set
$ \P(C) $ gives a convenient way to compute it in the case of curves.

\begin{definition}[Degree of a tropical $1$-cycle] \label{def-degree}
  The \emph{degree} $ \deg(C) $ of a tropical $1$-cycle $C$ in $ \RR^{n+1} /
  \gen\one $ is defined to be the (multiplicity of the origin in the)
  intersection product $ L^n_{n-1} \cdot C $ of $C$ with a general tropical
  hyperplane. 
\end{definition}

\begin{lemma}[The degree in terms of $ \P(C) $] \label{lem-degree}
  Let $ C \subset \RR^{n+1} / \gen\one $ be a tropical curve. Then $ \sum_{v
  \in \P(C)} v = \deg(C) \cdot \one $.
\end{lemma}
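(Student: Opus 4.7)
My plan is to split the proof into two steps: first, the balancing condition reduces the statement to identifying a single scalar multiple of $\one$; second, that scalar is identified as an intersection number with $L^n_{n-1}$ via an explicit rational function.

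The balancing condition from Notation \ref{not-cycles} applied to the $0$-dimensional cone $\{0\}\in C$ gives $\sum_i m(\sigma_i)\,[v_i]=0$ in $V=\RR^{n+1}/\gen\one$. Lifting to $\ZZ^{n+1}$, there is therefore a unique $\lambda\in\ZZ$ with $\sum_{v\in\P(C)}v=\lambda\,\one$, and the lemma becomes the claim that $\lambda=\deg(C)$.

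To identify $\lambda$, I would take the rational function $\varphi:V\to\RR$ defined by $\varphi([x]):=\min_j x_j - x_0$. It is integer piecewise linear and $\one$-invariant, hence well-defined on $V$. The key claim is that $\varphi\cdot V=L^n_{n-1}$, so that $\varphi$ cuts out $L^n_{n-1}$. The support is the tropical hyperplane by inspection; the verification that the multiplicity on each codimension-$1$ cone $\tau_{lm}=\{x_l=x_m\leq x_j\text{ for all }j\}$ equals $1$ is a direct application of the formula in Construction \ref{constr-intersection}, using the two adjacent top cones $\sigma_l,\sigma_m$ (on which $\varphi_{\sigma_k}=x_k-x_0$) and primitive normals $v_{\sigma_l/\tau}=e_m$, $v_{\sigma_m/\tau}=e_l$. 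The computation splits naturally into the cases $0\in\{l,m\}$ and $0\notin\{l,m\}$, each giving $1$ after a one-line evaluation. This cutting-out claim is the only real computational content and constitutes the main (if modest) obstacle of the proof.

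Once $\varphi\cdot V=L^n_{n-1}$ is established, applying the intersection formula to $\varphi\cdot C$ at $\tau=\{0\}$ is routine. The first term $\varphi_{\{0\}}\bigl(\sum_i m(\sigma_i)\,v_{\sigma_i/\{0\}}\bigr)$ vanishes, since $V_{\{0\}}=\{0\}$ and the argument is zero by balancing. For each ray $\sigma_i$, the normalization $\min_j v_{i,j}=0$ built into the definition of $\P(C)$ yields $\varphi_{\sigma_i}(v_i)=\min_j v_{i,j}-v_{i,0}=-v_{i,0}$. Hence
\[ \deg(C) \;=\; m_{\varphi\cdot C}(\{0\}) \;=\; -\sum_i m(\sigma_i)\bigl(-v_{i,0}\bigr) \;=\; \sum_i m(\sigma_i)\,v_{i,0}, \]
which is precisely the $0$-th coordinate of $\lambda\,\one$, namely $\lambda$. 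This gives $\lambda=\deg(C)$ and finishes the proof.
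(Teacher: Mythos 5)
Your proof is correct and follows essentially the same approach as the paper's: compute $L^n_{n-1}\cdot C$ at $\tau=\{0\}$ using the rational function $\varphi([x])=\min_j x_j - x_0$, observe that the first term in the intersection formula vanishes, and read off the $0$-th coordinate of $\sum_{v\in\P(C)}v$ from the second term. The only differences are cosmetic: you establish up front via balancing that $\sum_{v\in\P(C)}v$ is a multiple of $\one$ (whereas the paper computes the $0$-th coordinate first and then appeals to symmetry of the construction in the coordinates), and you spell out the verification that $\varphi$ cuts out $L^n_{n-1}$, which the paper simply asserts. Both are fine.
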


\begin{proof}
  As $ L^n_{n-1} $ is cut out by the function $ \varphi(x) =
  \min(x_0-x_0,x_1-x_0,\dots,x_n-x_0) $, the intersection product $ L^n_{n-1}
  \cdot C $ is easily computed with the formula of Construction
  \ref{constr-intersection} for $ \tau=\{0\} $: the first term vanishes due to
  the balancing condition, and thus every $1$-dimensional cone $ \sigma $ in
  $C$ with corresponding vector $ (x_0,\dots,x_n) $ in $ P(C) $, \ie such that
  $ m(\sigma) \, u_\sigma = [x_0,\dots,x_n] $ and $ \min (x_0,\dots,x_n) = 0 $,
  gives rise to a contribution of
    \[ - m(\sigma) \, \varphi (u_\sigma)
       = - \min (x_0-x_0,x_1-x_0,\dots,x_n-x_0) = x_0 \]
  to $ L^n_{n-1} \cdot C $. In other words, the first coordinates of all
  vectors in $ P(C) $ sum up to $ \deg(C) $. Of course, by symmetry this means
  that the sum of all vectors in $ P(C) $ is $ \deg(C) \cdot \one $.
\end{proof}

\begin{example} \label{ex-degc}
  For the tropical curve $C$ in $ L^3_2 $ from the picture in the introduction
  we have
    \[ \P(C) = \{ (0,3,1,0), (0,0,1,3), (2,0,1,0), (1,0,0,0) \}
       \quad \subset \ZZ^4. \]
  As these vectors sum up to $ (3,3,3,3) $, we see by Lemma \ref{lem-degree}
  that $C$ has degree $3$.
\end{example}

For our applications we will need intersection products of $ L^3_2 $ with a
classical plane. For this, let $ a_0,a_1,a_2,a_3 \in \ZZ $ not all zero with $
a_0+\cdots+a_3=0 $. We set $ f: \RR^4 / \gen\one \to \RR, \; (x_0,\dots,x_3)
\mapsto a_0x_0 + \cdots + a_3x_3 $ and $ \varphi: \RR^4 / \gen\one \to \RR, \;
x \mapsto \min(0,f(x)) $. Then the rational function $ \varphi $ cuts out a
cycle $H$ whose support is just the classical plane given by the equation $ f=0
$. We want to compute the intersection cycle $ L^3_2 \cdot H $.

\begin{lemma}[Intersection products of $ L^3_2 $ with classical planes]
    \label{lem-plane}
  With the notations as above, let $ d = \sum_{i: a_i>0} \, a_i $. Then the set $
  P(C) $ for $ C = L^3_2 \cdot H $ consists exactly of the following vectors:
  \begin{enumerate}
  \item \label{lem-plane-a}
    $ a_i e_j - a_j e_i $ for all $ i,j \in \{0,1,2,3\} $ with $ a_i>0 $
    and $ a_j<0 $;
  \item \label{lem-plane-b}
    $ d \, e_i $ for all $ i \in \{0,1,2,3\} $ with $ a_i = 0 $.
  \end{enumerate}
  In particular, we have $ \deg(C) = d $.
\end{lemma}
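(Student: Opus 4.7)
My plan is to compute $\varphi \cdot L^3_2$ directly from the formula in Construction \ref{constr-intersection}, after refining $L^3_2$ so that $\varphi$ is linear on each maximal cone.

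On each $2$-dimensional cone $\sigma_{ij} := \cone([e_i], [e_j])$ of $L^3_2$, writing a point as $s[e_i] + t[e_j]$ with $s, t \ge 0$ gives $f = s\,a_i + t\,a_j$; hence $\varphi$ is already linear on $\sigma_{ij}$ (identically $0$ or equal to $f$) whenever $a_i$ and $a_j$ share a weak sign. In the mixed-sign case $a_i > 0$, $a_j < 0$ I subdivide $\sigma_{ij}$ along the ray $\rho_{ij}$ through $[a_i e_j - a_j e_i]$ on which $f = 0$: the subcone adjacent to $[e_i]$ has $\varphi = 0$ and the one adjacent to $[e_j]$ has $\varphi = f$. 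In the degenerate subcase $a_i = -a_j$ this ray coincides with the matroid ray $[e_i + e_j]$, but no further treatment is required.

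With these subdivisions in place the multiplicity at each ray of the refined $L^3_2$ is a routine application of the formula (all $m_C(\sigma)=1$). At a ray $[e_i]$ with $a_i \ne 0$, every adjacent $2$-cone carries the same linear expression for $\varphi_\sigma$ (identically $0$ if $a_i > 0$, identically $f$ if $a_i < 0$), so the formula yields multiplicity $0$ by balancing of $L^3_2$ and linearity of $f$. At $[e_i]$ with $a_i = 0$ we have $\varphi_\tau = 0$, so only the second sum contributes; taking the primitive normal vector in each $\sigma_{ij}$ to be $[e_j]$, only the indices $j$ with $a_j < 0$ contribute, giving multiplicity $-\sum_{j:\, a_j < 0} a_j = d$. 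Since the primitive generator of $[e_i]$ with minimum coordinate $0$ is $e_i$, this produces the vector $d\,e_i$ in $P(C)$, matching case (b). The analogous argument shows that every preexisting matroid ray $[e_i + e_j]$ not equal to some $\rho_{ij}$ also contributes multiplicity $0$.

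Finally, at a new ray $\rho_{ij}$ the function $\varphi$ vanishes on $\tau$ itself, so $\varphi_\tau = 0$ kills the first term of the formula. Balancing of $L^3_2$ at $\rho_{ij}$ lets me choose opposite primitive representatives $v_{\sigma^-/\rho_{ij}} = -v_{\sigma^+/\rho_{ij}}$, reducing the multiplicity to $f(v_{\sigma^+/\rho_{ij}})$. Setting $g = \gcd(a_i, -a_j)$, $p = a_i/g$, $q = -a_j/g$, the primitive lattice point of $\rho_{ij}$ in $\ZZ[e_i] + \ZZ[e_j]$ is $q[e_i] + p[e_j]$; Bézout provides $x, y \in \ZZ$ with $xp - yq = 1$, and then $v_{\sigma^+/\rho_{ij}} = x[e_i] + y[e_j]$ yields $f(v) = g(xp - yq) = g$. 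Since the primitive generator of $\rho_{ij}$ with minimum coordinate $0$ is $(a_i e_j - a_j e_i)/g$, this contributes exactly $a_i e_j - a_j e_i$ to $P(C)$ as in case (a); the degenerate subcase $a_i = -a_j$ is absorbed uniformly into the same calculation. The identity $\deg(C) = d$ then follows immediately from Lemma \ref{lem-degree} by summing the listed vectors coordinate by coordinate. The main bookkeeping challenge is organizing the case split and subdivisions cleanly, but each individual multiplicity computation is routine.
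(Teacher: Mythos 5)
Your proof is correct, and it takes a genuinely different route from the paper's. The paper identifies the candidate rays as exactly the rays of $L^3_2 \cap H$, observes via Lemma \ref{lem-degree} that the listed vectors already form a balanced cycle in the plane $H$, and notes that at most two of them are of type \ref{lem-plane-b}; since balancing in $H$ then determines those two multiplicities once all the type-\ref{lem-plane-a} multiplicities are known, the paper only verifies the type-\ref{lem-plane-a} case, which it does locally via the lattice-index formula of \cite[Lemma 1.4]{Rau08}. You instead compute every ray multiplicity in $\varphi \cdot L^3_2$ directly from the defining formula in Construction \ref{constr-intersection}, after refining $L^3_2$ along the rays $\rho_{ij}$ on which $f$ vanishes. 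The paper's argument is shorter and leans on balancing to sidestep most of the ray-by-ray checks; yours is entirely self-contained and never appeals to an external lattice-index result. One detail you left implicit but which does hold: your choice $v_{\sigma^+/\rho_{ij}} = x[e_i]+y[e_j]$ with $xp-yq=1$ really is the normal vector pointing into $\sigma^+$, because $f$ evaluates on it to $g(xp-yq)=g>0$ and $\sigma^+$ is by definition the half of $\sigma_{ij}$ on which $f>0$, so orientation and normalization agree automatically.
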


\def\message#l32-plane{}%
  {\begin{center}\input{pics/l32-plane}\end{center}}

\begin{proof}
  By Construction \ref{constr-intersection}, the cones that can occur in the
  intersection product $ L^3_2 \cdot H $ are the $1$-dimensional cones of $
  L^3_2 \cap H $. As these are exactly the classes of the vectors listed in the
  lemma, it only remains to compute their multiplicities in $ L^3_2 \cdot
  H $.

  Moreover, for all possibilities of the signs of $ a_0,\dots,a_3 $, the
  vectors listed in the lemma sum up to $ (d,d,d,d) $. Hence, they form a
  balanced cycle of degree $d$ by Lemma \ref{lem-degree}. As shown in the
  picture above, the number of these vectors can vary (3 or 4), but in any case
  at most two of them are of type \ref{lem-plane-b}, \ie point along a ray of $
  L^3_2 $. Since $ L^3_2 \cdot H $ is a balanced cycle too and the balancing
  condition in the plane $H$ allows to reconstruct the multiplicities of up to
  two linearly independent cones, it thus suffices to check the multiplicities
  in the case \ref{lem-plane-a}.

  In this case we can assume by symmetry that $ i=0 $ and $ j=1 $. Locally
  around the $1$-dimensional cone $ \sigma $ spanned by $ (-a_1,a_0,0,0) $,
  the cycles $ L^3_2 $ and $H$ are then cut out by the rational functions $
  \min(0,x_2-x_3) $ and $ \min(0,a_0x_0 + \cdots + a_3x_3) $, respectively. In
  non-homogeneous coordinates with $ x_3 = 0 $ the corresponding functions are
  $ \min(0,x_2) $ and $ \min(0,a_0x_0+a_1x_1 + a_2 x_2) $. By
  \cite[Lemma 1.4]{Rau08} the multiplicity of $ \sigma $ in the intersection
  product is therefore the index of the lattice $ \{ (x_2, a_0x_0 + a_1x_1 +
  a_2 x_2): x_0,x_1,x_2 \in \ZZ \} $ in $ \ZZ^2 $, \ie the (positive) greatest
  common divisor of the $ 2 \times 2 $ minors of the matrix
    \[ \begin{pmatrix} 0 & 0 & 1 \\ a_0 & a_1 & a_2 \end{pmatrix}, \]
  which is just $ \gcd(a_0,a_1) $. As desired, the vector in $ P(C) $
  corresponding to the cone $ \sigma $ is thus $ \gcd(a_0,a_1) \cdot \frac
  1{\gcd(a_0,a_1)} (-a_1,a_0,0,0) = (-a_1,a_0,0,0) $.
\end{proof}

\begin{construction}[Intersection products in $ L^3_2 $] \label{constr-l32}
  Intersection products of cycles can not only be constructed in vector spaces,
  but also in matroid fans \cite{FJ10,Sha10}. In this paper we will only need
  the (degree of the) intersection product of two curves $ C_1 $ and $ C_2 $ in
  $ L^3_2 $; by \cite[Proposition 4.1]{Sha10} it is given by the explicit
  formula
    \[ C_1 \cdot C_2 = \deg(C_1) \cdot \deg(C_2)
       - \sum_{0 \le i<j \le 3} \;\;
         \sum_{\substack{a\,e_i+b\,e_j \in P(C_1) \\ a,b>0}} \;\;
         \sum_{\substack{c\,e_i+d\,e_j \in P(C_2) \\ c,d>0}} \;\;
         \min (ad,bc). \]
\end{construction}

  \section{Projections of Matroid Fans} \label{sec-projections}

In order to study the (relative) realizability of tropical curves in
$2$-dimensional matroid fans, our strategy is to use coordinate projections to
map the situation to the plane, where we can then apply Newton polytope
techniques. For example, there are four projections of the space $ L^3_2
\subset \RR^4/\gen\one $ of Example \ref{ex-matroid} to the plane $
\RR^3/\gen\one $ which are described by forgetting one of the coordinates. But
of course none of these projections is injective, and thus all of them lose
some information on the curves in $ L^3_2 $. It is the main goal of this
section to prove that all coordinate projections together suffice to
reconstruct arbitrary tropical curves in the matroid fan (see Corollary
\ref{cor-reconstruct}).

Throughout this section, let $M$ be a loop-free matroid on a finite ground set
$N$, and let $ B(M) \subset \RR^N / \gen\one $ be the corresponding matroid fan
as in Construction \ref{constr-matroid}, consisting of all cones $ \sigma_\calF
$ for chains of flats $ \calF = (F_1,\dots,F_m) $ in $M$. Recall that $
\sigma_\calF $ is generated by the vectors $ [v_{F_i}] $, where $ v_F \in \RR^N
$ for a flat $F$ has $i$-th coordinate $1$ for $ i \in F $, and $0$ for $ i
\notin F $. For details on matroid theory we refer to \cite{Oxl}.

\begin{construction}[Projections of matroid fans] \label{constr-project}
  For a non-empty subset $ A \subset N $ we denote by $ p^A: \RR^N \to \RR^A
  $ (and by abuse of notation also $ p^A: \RR^N / \gen\one \to \RR^A /
  \gen\one $) the projection onto the coordinates of $A$. Our goal is to
  describe the projection $ p^A (B(M)) $.

  For this we consider the so-called \emph{restricted matroid} $ M|_A $ on $A$
  whose independent sets are exactly those subsets of $A$ that are independent
  subsets of $N$ in $M$. It gives rise to a matroid fan $ B(M|_A) \subset \RR^A
  / \gen\one $. We will now show that $ p^A $ maps $ B(M) $ to $ B(M|_A) $, and
  describe this map more precisely. An example of this is shown in the picture
  below, where $M$ is the uniform rank-$3$ matroid on $ N = \{0,1,2,3\} $, so
  that $ B(M) = L^3_2 $, and $ A = \{0,1,2\} $. Hence, $ M|_A $ is the uniform
  rank-$3$ matroid on $ \{0,1,2\} $, the map $ p^A $ just forgets the last
  coordinate, and can be viewed in the picture as the vertical projection onto
  $ \RR^A / \gen\one \cong \RR^2 $.

  \def\message#ex-matroid{}%
  {\begin{center}\input{pics/ex-matroid}\end{center}}

  In order to describe $ p^A $ we first note that, if $ F \subset N $ is a
  flat of $M$, then $ F \cap A \subset A $ is a flat of $ M|_A $
  \cite[Proposition 3.3.1]{Oxl}. So if $ \calF = (F_1,\dots,F_m) $ is a chain
  of flats in $M$ then $ (F_1 \cap A,\dots,F_m \cap A) $ is a collection of
  ascending flats in $A$ --- it might be however that some of these flats
  coincide or are equal to $ \emptyset $ or $A$. We denote by $ \calF \cap A $
  the chain of flats in $ M|_A $ obtained from the sequence $ (F_1 \cap A,
  \dots,F_m \cap A) $ by deleting repeated entries and those that are equal to
  $ \emptyset $ or $A$. In our example in the picture above, the chain of flats
  $ \calF = (\{0\},\{0,3\}) $ in $M$ would \eg give rise to the chain of
  flats $ \calF \cap A = (\{0\}) $ in $ M|_A $.

  With these notations we can now describe the projection $ p^A $ as follows.
\end{construction}

\begin{lemma}[Properties of projections of matroid fans] \label{lem-project}
  Let $ A \subset N $ be a non-empty subset. With notations as in Constructions
  \ref{constr-matroid} and \ref{constr-project}, we have for the corresponding
  projection $ p^A: \RR^N / \gen\one \to \RR^A / \gen\one $:
  \begin{enumerate}
  \item \label{lem-project-a}
    $ p^A ([v_F]) = [v_{F \cap A}] $ for every flat $F$ of $M$.
  \item \label{lem-project-b}
    Let $ \calF $ be a chain of flats in $M$. Then $ p^A $ maps the
    corresponding cone $ \sigma_\calF $ of $ B(M) $ surjectively to the cone
    $ \sigma_{\calF \cap A} $ of $ B(M|_A) $. The map $p^A|_{\sigma_\calF}$ is
    bijective if and only if the chains $ \calF $ and $ \calF \cap A $ have
    the same length.
  \item \label{lem-project-c}
    The maps $p^A$ and $p^A|_{\sigma_\calF}$ of \ref{lem-project-b} are also
    surjective and bijective, respectively, over $ \ZZ $, \ie they map $
    V_{\sigma_\calF} \cap (\ZZ^N / \gen\one) $ surjectively and bijectively,
    respecively, to $ V_{\sigma_{\calF \cap A}} \cap (\ZZ^A / \gen\one) $.
  \item \label{lem-project-d}
    $ p^A $ maps $ B(M) $ surjectively to $ B(M|_A) $.
  \item \label{lem-project-e}
    $ p^A $ maps $ B(M) $ surjectively to $ \RR^A / \gen\one $ if and only
    if $A$ is an independent set in $M$.
  \end{enumerate}
\end{lemma}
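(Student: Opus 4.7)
My plan is to prove the five parts in order, with each building on the previous.

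Part (a) is immediate from the definition: $v_F$ has coordinate $1$ on indices in $F$ and $0$ elsewhere, so its image under coordinate projection to $\RR^A$ is $v_{F\cap A}$, and the equality descends to the quotient by $\gen\one$. For Part (b), (a) shows that the generators $[v_{F_1}], \dots, [v_{F_m}]$ of $\sigma_\calF$ map to $[v_{F_1 \cap A}], \dots, [v_{F_m \cap A}]$. Discarding the zero ones (arising when $F_i \cap A \in \{\emptyset, A\}$) and collapsing duplicates, these are precisely the generators of $\sigma_{\calF \cap A}$, which gives surjectivity onto this cone. Since both cones are simplicial, the restriction $p^A|_{\sigma_\calF}$ is bijective iff all $m$ images remain distinct and nonzero, i.e., iff $\calF$ and $\calF \cap A$ have the same length.

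For the main technical step, Part (c), the plan is to establish the auxiliary claim that for \emph{any} chain of flats $\calG = (G_1, \dots, G_\ell)$ on a ground set $N'$, the classes $[v_{G_1}], \dots, [v_{G_\ell}]$ form a $\ZZ$-basis of the saturated lattice $V_{\sigma_\calG} \cap (\ZZ^{N'}/\gen\one)$. The key observation is that the adapted sequence $[v_{G_1}], [v_{G_2}] - [v_{G_1}], \dots, [v_{G_\ell}] - [v_{G_{\ell-1}}]$ lifts in $\ZZ^{N'}$ to the characteristic vectors of the disjoint pieces $G_1, G_2 \setminus G_1, \dots, G_\ell \setminus G_{\ell-1}$ of a partition of $N'$ whose remaining block is $N' \setminus G_\ell$. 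Any integer vector in $V_{\sigma_\calG}$ lifts to an integer vector constant on each piece, hence is manifestly an integer combination of these indicator vectors (the indicator of $N' \setminus G_\ell$ being absorbed modulo $\gen\one$). Applying the claim to both $\calF$ and $\calF \cap A$, the map $p^A$ sends the $\ZZ$-basis $\{[v_{F_i}]\}$ of the source to the generators $\{[v_{F_i \cap A}]\}$ of the target lattice, giving $\ZZ$-surjectivity; when the chain lengths agree, it is a bijection of $\ZZ$-bases and hence a lattice isomorphism.

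For Part (d) I would lift a given chain $(G_1, \dots, G_\ell)$ of flats in $M|_A$ to $M$ by taking closures, setting $F_i := \cl_M(G_i)$. Since the rank functions of $M$ and $M|_A$ agree on subsets of $A$, closures in $M$ satisfy $\cl_M(G_i) \cap A = G_i$, which forces the inclusions $F_i \subsetneq F_{i+1}$ to be strict and guarantees $F_\ell \neq N$; hence $(F_1, \dots, F_\ell)$ is a valid chain in $M$ whose image under the map of (b) is the original cone $\sigma_\calG$. Part (e) is then immediate from (d) combined with the observation that $B(M|_A) = \RR^A / \gen\one$ iff $M|_A$ has rank $|A|$, i.e., iff $A$ is independent in $M$ (in that case $M|_A$ is free and the cones of chains of subsets of $A$ exhaust the ambient space). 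The main obstacle is Part (c), whose lattice statement requires the correct choice of basis adapted to the partition induced by the chain; once that is established the remaining parts are short formal consequences.
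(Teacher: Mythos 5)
Your proof is correct and follows essentially the same route as the paper's: part (a) by inspection, (b) by tracking the images of the generating rays, (d) by lifting a chain of flats in $M|_A$ via the closure operator $\cl_M$, and (e) via the equivalence between $A$ being independent and $M|_A$ having full rank. Two points where you deviate are worth noting. For (c) you prove the lattice statement in detail: that $[v_{G_1}],\dots,[v_{G_\ell}]$ form a $\ZZ$-basis of $V_{\sigma_\calG}\cap(\ZZ^{N'}/\gen\one)$, by passing to the telescoping differences $[v_{G_j}]-[v_{G_{j-1}}]$, which lift to the indicator vectors of the partition blocks $G_j\setminus G_{j-1}$, and observing that any integer vector in $V_{\sigma_\calG}$ must be constant on these blocks. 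The paper merely asserts that the lattices are spanned by the $[v_{F_i}]$ and $[v_{F_i\cap A}]$; your argument supplies the missing justification. For (e) you argue directly that the Bergman fan of a free matroid (the uniform matroid of rank $|A|$ on $A$) has support equal to all of $\RR^A/\gen\one$, whereas the paper invokes the tropical irreducibility of $\RR^A/\gen\one$ (cited from Gathmann--Kerber--Markwig) to conclude that a full-dimensional subfan must already be everything. Your version is more elementary in that it avoids the tropical-cycle machinery, at the small cost of needing to know that the chains of subsets of $A$ exhaust $\RR^A/\gen\one$ when $M|_A$ is Boolean.
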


\begin{proof}
  Statement \ref{lem-project-a} follows immediately from the definition of $
  v_F $, since $ p^A $ just forgets the coordinates of $A$. So if $ \calF =
  (F_1,\dots,F_m) $ is a chain of flats in $M$, the cone $ \sigma_\calF $
  spanned by $ [v_{F_1}],\dots,[v_{F_m}] $ is mapped by $ p^A $ surjectively
  to the cone spanned by $ [v_{F_1 \cap A}],\dots,[v_{F_m \cap A}] $, which by
  definition is equal to $ \sigma_{\calF \cap A} $. As a linear map of cones
  this map is bijective if and only if $ \sigma_\calF $ and $ \sigma_{\calF
  \cap A} $ have the same dimension, \ie if $ \calF $ and $ \calF \cap A $
  have the same length. This shows \ref{lem-project-b}. Statement
  \ref{lem-project-c} follows in the same way, noting that the lattices
  $ V_{\sigma_\calF} \cap (\ZZ^N / \gen\one) $ and $ V_{\sigma_{\calF \cap A}}
  \cap (\ZZ^A / \gen\one) $ are spanned by the classes of $ v_{F_1},\dots,
  v_{F_m} $ and $ v_{F_1 \cap A},\dots,v_{F_m \cap A} $, respectively.

  To show the last two statements, note that for every chain of flats $
  \calF' = (F'_1,\dots,F'_m) $ in $ M|_A $ we get a chain of flats $ \calF
  = (\cl(F'_1),\dots,\cl(F'_m)) $ in $M$ with $ \calF \cap A = \calF' $ by
  applying the closure operator $ \cl $ of $M$ \cite[3.1.16]{Oxl}. Thus $
  p^A(\sigma_\calF) = \sigma_{\calF'} $, and hence, the image of $ p^A $ is
  all of $ B(M|_A) $, as claimed in \ref{lem-project-d}. As $ \RR^A /
  \gen\one $ is irreducible \cite[chapter 2]{GKM}, this image $ B(M|_A) $ is
  equal to $ \RR^A / \gen\one $ if and only if its dimension is equal to $
  |A|-1 $. This is the case if and only if the matroid $ M|_A $ has rank $ |A|
  $, which in turn is equivalent to saying that $ M|_A $ is the uniform matroid
  on $A$, \ie that $A$ is an independent set in $M$. This proves
  \ref{lem-project-e}.
\end{proof}

\begin{definition}[Rank of a chain of flats] \label{def-rank}
  Let $ r: \calP(N) \to \NN $ be the rank function of the matroid $M$, \cf 
  \cite[Section 1.3]{Oxl}.
  For a chain of flats $ \calF = (F_1,\dots,F_m) $ of $M$, with $ \emptyset
  \subsetneq F_1 \subsetneq \cdots \subsetneq F_m \subsetneq N $ as above, we
  define the \emph{rank} of $ \calF $ to be
    \[ r(\calF) := r(F_1) + \cdots + r(F_m). \]
  We will also call this the rank of the corresponding cone $ \sigma_\calF $ of
  $ B(M) $.
\end{definition}

\begin{lemma} \label{lem-rank}
  For each chain of flats $ \calF $ of $M$ there is a basis $ A \subset N $ of
  $M$ such that:
  \begin{enumerate}
  \item \label{lem-rank-a}
    the projection $ p^A $ is injective on the cone $ \sigma_\calF $ of $
    B(M) $;
  \item \label{lem-rank-b}
    for every other chain of flats $ \calF' \neq \calF $ of $M$ with $
    p^A(\sigma_{\calF'}) = p^A(\sigma_\calF) $ we have $ r(\calF') >
    r(\calF) $.
  \end{enumerate}
\end{lemma}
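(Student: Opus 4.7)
The idea is to build $A$ tightly adapted to the chain $\calF = (F_1,\ldots,F_m)$, so that $A$ meets each $F_i$ in exactly a basis of $F_i$, and then to read off both conclusions from straightforward rank computations.

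First I would construct $A$ by iterated extension: pick a basis $A_1 \subset F_1$ of the flat $F_1$ in $M$ (so $|A_1| = r(F_1)$), extend via the matroid exchange axiom to a basis $A_2 \subset F_2$ of $F_2$, continue through $A_m \subset F_m$, and finally extend $A_m$ to a basis $A$ of $M$ itself. All inclusions $A_1 \subsetneq \cdots \subsetneq A_m \subsetneq A$ are strict; the last one because $F_m$ is a proper flat, hence $r(F_m) < r(N) = |A|$. A short argument using the maximality of $A_i$ as an independent subset of $F_i$ then shows $F_i \cap A = A_i$: if $x \in (F_i \cap A) \setminus A_i$ existed, then $A_i \cup \{x\} \subset A$ would be independent but contained in the rank-$r(F_i)$ flat $F_i$, contradicting $|A_i| = r(F_i)$. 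In particular, $\calF \cap A = (A_1,\ldots,A_m)$ has the same length as $\calF$, so Lemma~\ref{lem-project}\ref{lem-project-b} gives part \ref{lem-rank-a} immediately.

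For part \ref{lem-rank-b}, take any other chain $\calF' = (F'_1,\ldots,F'_{m'}) \neq \calF$ with $p^A(\sigma_{\calF'}) = p^A(\sigma_\calF)$; by Lemma~\ref{lem-project}\ref{lem-project-b} this forces $\calF' \cap A = (A_1,\ldots,A_m)$. Setting $B'_j := F'_j \cap A$, the sequence $(B'_j)$ is weakly increasing, and after deleting repeats, the entry $\emptyset$, and the entry $A$ we recover $(A_1,\ldots,A_m)$; the value $B'_j = A$ cannot actually occur, because it would force $F'_j \supset \cl(A) = N$. I would partition $\{1,\ldots,m'\}$ into $J_0,J_1,\ldots,J_m$ by setting $J_i := \{j : B'_j = A_i\}$ with $A_0 := \emptyset$; each $J_i$ for $i \geq 1$ is nonempty. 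Since $B'_j \subset A$ is automatically independent, $r(F'_j) \geq |B'_j|$, and loop-freeness gives $r(F'_j) \geq 1$ whenever $j \in J_0$, so
\[
r(\calF') \;\geq\; |J_0| + \sum_{i=1}^{m} |J_i|\, r(F_i) \;\geq\; r(\calF).
\]
Strictness then follows by exhausting cases: any $|J_0| \geq 1$ or any $|J_i| \geq 2$ (with $r(F_i) \geq 1$) produces a strict gap; the only remaining situation has $m' = m$ and $F'_i \cap A = A_i$ for all $i$, and then equality in $r(F'_i) \geq r(F_i)$ would make $A_i$ a basis of $F'_i$, forcing $F'_i = \cl(A_i) = F_i$ and hence $\calF' = \calF$, contradicting the hypothesis.

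The step I expect to be the main subtlety is allowing $J_0 \neq \emptyset$: a proper flat $F'_j$ of $M$ may intersect $A$ trivially (for instance, when it is an atom of $M$ not contained in $A$), so this case cannot be excluded combinatorially and must instead be absorbed via the rank bound $r(F'_j) \geq 1$ coming from loop-freeness. Everything else reduces to the matroid bookkeeping described above.
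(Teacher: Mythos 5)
Your proof is correct and follows essentially the same approach as the paper: both construct a basis $A$ that meets each $F_i$ in a basis of $F_i$ (the paper does this by extending $\calF$ to a maximal chain of flats and choosing one element per step; you do it by iterated basis extension, which is equivalent), and both then derive part \ref{lem-rank-b} from the same rank comparison $r(\calF') \ge \sum r(F'_j \cap A) \ge r(\calF)$ together with the observation that equality forces each $F'_i$ to be the closure of $F_i \cap A$, hence equal to $F_i$. Your partition into $J_0,\dots,J_m$ is a slightly more explicit bookkeeping device than the paper's choice of indices $j_i$, but the underlying argument — including the need to handle flats $F'_j$ with $F'_j \cap A = \emptyset$ via loop-freeness — is the same.
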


\begin{proof}
  Extend $ \calF=(F_1,\dots,F_m) $ to a maximal chain of flats $
  (G_1,\dots,G_k) $ of length $ k := r(N)-1 $ (with $ G_0 := \emptyset
  \subsetneq G_1 \subsetneq \cdots \subsetneq G_k \subsetneq N =: G_{k+1} $).
  Choosing an element of each $ G_i \backslash G_{i-1} $ for $ i=1,\dots,k+1 $,
  we then obtain a basis $A$ of $M$ with $ r(G_i \cap A) = r(G_i) $ for all $
  i=1,\dots,k $, and thus $ r(F_i) = r(F_i \cap A) $ for all $ i=1,\dots,m $.

  Hence, from $ 1 \le r(F_1) < \cdots < r(F_m) \le k $ it follows that
  $ 1 \le r(F_1 \cap A) < \cdots < r(F_m \cap A) \le k $. So the sets $ F_1
  \cap A,\dots,F_m \cap A $ are all distinct and not equal to $ \emptyset $ or
  $A$, and thus $ \calF \cap A = (F_1 \cap A,\dots,F_m \cap A) $ and $
  r(\calF \cap A) = r(\calF) $. In particular, since $ \calF $ and $ \calF \cap
  A $ have the same length it follows from Lemma \ref{lem-project}
  \ref{lem-project-b} that $ p^A $ is injective on $ \sigma_\calF $. This
  shows part \ref{lem-rank-a} of the lemma.

  Now let $ \calF' = (F'_1,\dots,F'_q) $ be another chain of flats with the
  same image under $ p^A $, \ie by Lemma \ref{lem-project}
  \ref{lem-project-b} such that $ \calF' \cap A = \calF \cap A = (F_1 \cap A,
  \dots,F_m \cap A) $. Then for each $ i=1,\dots,m $ there must be an index $
  j_i \in \{1,\dots,q\} $ with $ F'_{j_i} \cap A = F_i \cap A $, and thus
    \[ r(\calF') \ge \sum_{i=1}^m r(F'_{j_i})
         \ge \sum_{i=1}^m r(F'_{j_i} \cap A)
         = \sum_{i=1}^m r(F_i \cap A)
         = r(\calF \cap A)
         = r(\calF).
       \]
  In the case of equality $ r(\calF') = r(\calF) $ we must have $ q=m $
  (\ie $ j_i = i $ for all $ i=1,\dots,m $) and $ r(F'_i) = r(F'_i \cap A) $
  for all $i$. But then $ F'_i $ and $ F_i $ are two flats of $M$ containing
  the set $ F'_i \cap A = F_i \cap A $, where
    \[ r(F'_i) = r(F'_i \cap A) = r(F_i \cap A) = r(F_i). \]
  This requires both $ F'_i $ and $ F_i $ to be the closure of $ F'_i \cap A =
  F_i \cap A $ for all $i$. In particular, we then have $ \calF' = \calF $,
  completing the proof of \ref{lem-rank-b}.
\end{proof}

\begin{example} \label{ex-rank}
  Let $M$ be the uniform rank-$3$ matroid on $ N=\{0,1,2,3\} $, so $
  B(M) = L^3_2 $ as in Example \ref{ex-matroid}. Then $ B(M) $ has cones of
  ranks $1$, $2$, and $3$, corresponding to the following chains of flats:
  \begin{enumerate}
  \item four $1$-dimensional cones of rank $1$ spanned by a unit vector,
    corresponding to the chains $ \calF = (\{i\}) $ for $ 0 \le i \le 3 $;
  \item six $1$-dimensional cones of rank $2$ spanned by a vector with two
    entries $1$ and two entries $0$, corresponding to the chains $ \calF =
    (\{i,j\}) $ for $ 0 \le i < j \le 3 $;
  \item twelve $2$-dimensional cones of rank $3$, corresponding to the chains $
    \calF = (\{i\},\{i,j\}) $ for $ 0 \le i,j \le 3 $ with $ i \neq j $.
  \end{enumerate}
  Let us apply (the proof of) Lemma \ref{lem-rank} to the first type of
  chain, say to $ \calF = (\{0\}) $ of rank $1$ with corresponding cone $
  \sigma_\calF $ spanned by $ [v_{\{0\}}] = [1,0,0,0] $. We extend $ \calF $ to
  a maximal chain of flats, \eg to $ \emptyset \subsetneq \{0\} \subsetneq
  \{0,1\} \subsetneq N $, and derive from this the basis $ A=\{0,1,2\} $ of
  $M$. Projecting $ B(M) $ with $ p^A $ (as in the picture in Construction
  \ref{constr-project}) we see indeed that $ \sigma_\calF $ is mapped
  injectively to its image cone spanned by $ [1,0,0] $, and that there are
  three more cones with the same image --- namely the ones corresponding to the
  chains $ (\{0,3\}) $, $ (\{0\},\{0,3\}) $, and $ (\{3\},\{0,3\}) $ --- and
  they all have bigger rank.
\end{example}

With this result we can now reconstruct arbitrary $1$-cycles in matroid fans
from their projections, by reconstructing their rays by descending induction on
the rank of the cone in which they lie.

\begin{corollary}[Reconstruction of $1$-cycles in matroid fans with
    projections] \label{cor-reconstruct}
  Let $ C,C' \in Z_1^{\trop}(B(M)) $ be two $1$-cycles contained in the Bergman
  fan $ B(M) $.

  If $ p^A_* (C) = p^A_* (C') $ for all bases $ A \subset N $ of $M$, then $ C
  = C' $.
\end{corollary}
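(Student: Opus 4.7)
The plan is to consider the difference $D := C - C' \in Z_1^{\trop}(B(M))$, for which $p^A_*(D) = 0$ for every basis $A$ of $M$, and to show that this forces $D = 0$. After passing to a common subdivision of $C$, $C'$, and $B(M)$, every ray $\rho$ of $D$ lies in the relative interior of a unique minimal cone $\sigma_{\calF_\rho}$ of $B(M)$, so that the rank $r(\calF_\rho)$ is a well-defined integer attached to $\rho$. I will prove $m_D(\rho) = 0$ for every such $\rho$ by descending induction on $r(\calF_\rho)$.

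For the inductive step, fix a ray $\rho$ and let $A \subset N$ be the basis supplied by Lemma \ref{lem-rank} applied to $\calF_\rho$. Since $p^A$ is injective on $\sigma_{\calF_\rho}$, it sends $\rho$ into the relative interior of the cone $p^A(\sigma_{\calF_\rho}) = \sigma_{\calF_\rho \cap A}$ of $B(M|_A)$. By Construction \ref{constr-push} the multiplicity of $p^A(\rho)$ in the zero cycle $p^A_*(D)$ is
\[ \sum_{\rho'\,:\,p^A(\rho') = p^A(\rho)} m_D(\rho') \cdot [\Lambda'_{p^A(\rho)} : p^A(\Lambda_{\rho'})], \]
and the index contributed by $\rho$ itself is a positive integer by the injectivity of $p^A$ on the span of $\sigma_{\calF_\rho}$ together with Lemma \ref{lem-project} \ref{lem-project-c}. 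The goal becomes showing that every competing $\rho' \neq \rho$ in this sum lies in a minimal cone $\sigma_{\calF'}$ of strictly greater rank, so that $m_D(\rho') = 0$ by the inductive hypothesis.

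Such a $\rho'$ necessarily has $\calF' \neq \calF_\rho$, since otherwise injectivity of $p^A$ on $\sigma_{\calF_\rho}$ would force $\rho' = \rho$. Because $p^A(\rho) \in \relint(p^A(\sigma_{\calF_\rho}))$ lies in both cones $p^A(\sigma_{\calF'})$ and $p^A(\sigma_{\calF_\rho})$ of the fan $B(M|_A)$, their intersection is a face of each that meets the relative interior of $p^A(\sigma_{\calF_\rho})$, hence equals $p^A(\sigma_{\calF_\rho})$; so $p^A(\sigma_{\calF'}) \supseteq p^A(\sigma_{\calF_\rho})$. If equality holds, Lemma \ref{lem-rank} \ref{lem-rank-b} immediately yields $r(\calF') > r(\calF_\rho)$. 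Otherwise $\calF_\rho \cap A$ is a proper subchain of $\calF' \cap A$, and the identity $r(\calF_\rho \cap A) = r(\calF_\rho)$ built into the construction of $A$ in the proof of Lemma \ref{lem-rank} yields
\[ r(\calF') \geq r(\calF' \cap A) > r(\calF_\rho \cap A) = r(\calF_\rho). \]

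The delicate point of the proof is exactly this strict-inclusion sub-case, since Lemma \ref{lem-rank} \ref{lem-rank-b} is phrased only for equal projected cones; the rank identity above patches it. With both sub-cases handled, the displayed multiplicity equation collapses to a positive-integer multiple of $m_D(\rho)$, forcing $m_D(\rho) = 0$ and completing the induction, whence $D = 0$ and $C = C'$.
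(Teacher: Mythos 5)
Your proof is correct and follows essentially the same route as the paper: descending induction on the rank $r(\calF_\rho)$ of the ambient cone, using the basis $A$ from Lemma~\ref{lem-rank} so that competing rays in the push-forward formula must come from cones of strictly larger rank. Passing to the difference $D = C - C'$ rather than comparing the multiplicities of $C$ and $C'$ directly is a cosmetic reformulation.

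One small remark on what you call the ``delicate point'': the strict-inclusion sub-case $p^A(\sigma_{\calF'}) \supsetneq p^A(\sigma_{\calF_\rho})$ never actually occurs, so your patch, while correct, is unnecessary. Indeed, $\rho'$ lies in the relative interior of $\sigma_{\calF'}$, and a surjective linear map of cones carries relative interior onto relative interior; hence $p^A(\rho') = p^A(\rho)$ lies in the relative interior of $p^A(\sigma_{\calF'})$ as well as of $p^A(\sigma_{\calF_\rho})$. Two cones of the same fan sharing a relatively interior point are equal, so the hypothesis of Lemma~\ref{lem-rank}~\ref{lem-rank-b} is automatically met. This is implicitly what the paper uses.
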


\begin{proof}
  Let $ u \in \ZZ^N / \gen\one $ be a primitive vector contained in the support
  of $ B(M) $, let $ \sigma $ be the unique cone of $ B(M) $ containing $u$ in
  its relative interior, and let $ \calF $ be the corresponding chain of flats
  of $M$. Moreover, let $ \lambda $ and $ \lambda' $ be the multiplicities of
  $u$ in $C$ resp.\ $C'$. We have to prove that $ \lambda = \lambda' $.

  We will show this by descending induction on the rank $ r(\calF) $ as in
  Definition \ref{def-rank}. The start of the induction is trivial by Lemma
  \ref{lem-rank}, since the possible values of $ r(\calF) $ for a given
  matroid fan are bounded. For the induction step choose a basis $ A \subset N
  $ of $M$ for $ \calF $ as in Lemma \ref{lem-rank}, and let $ w \in \ZZ^A /
  \gen\one $ be the primitive vector pointing in the direction of $ p^A(u) $.
  Let $ u_1,\dots,u_m \in \ZZ^N / \gen\one $ be the primitive vectors occurring
  in $C$ or $C'$ except $u$ that are mapped by $ p^A $ to a positive multiple
  of $w$, and let $ \lambda_1,\dots,\lambda_m $ and $
  \lambda'_1,\dots,\lambda'_m $ be their multiplicities in $C$ resp.\ $C'$.
  Then the multiplicity of $w$ in $ p^A_* (C) = p^A_* (C') $ is
    \[ \lambda \, [\ZZ w: \ZZ p^A(u)]
       + \sum_{i=1}^m \lambda_i \, [\ZZ w: \ZZ p^A(u_i)]
       = \lambda' \, [\ZZ w: \ZZ p^A(u)]
       + \sum_{i=1}^m \lambda'_i \, [\ZZ w: \ZZ p^A(u_i)] \]
  by the definition of the push-forward of tropical cycles in Construction
  \ref{constr-push}. Now Lemma \ref{lem-rank} tells us that the vectors $
  u_1,\dots,u_m $ must lie in cones of rank bigger than $ r(\calF) $ (there can
  be no such vectors in $\calF$ by part \ref{lem-rank-a} of the lemma, and none
  in other cones of the same or smaller rank than $ r(\calF) $ by
  \ref{lem-rank-b}), and thus $ \lambda_i = \lambda'_i $ for all $ i=1,\dots,m
  $ by the induction assumption. Hence, we conclude by the above equation that $
  \lambda = \lambda' $.
\end{proof}

\begin{remark} \label{rem-reconstruct}
  Note that by Lemma \ref{lem-project} \ref{lem-project-e} the required
  coordinate projections to reconstruct $1$-cycles in $ B(M) $ are precisely
  those that map $ B(M) $ surjectively to a real vector space $ \RR^A /
  \gen\one $ of dimension $ r(N)-1 = \dim B(M) $. For example, in the case of $
  L^3_2 $ of Example \ref{ex-matroid}, corresponding to the uniform rank-$3$
  matroid on $ N=\{0,1,2,3\} $, we need the four coordinate projections
  $ p^A: \RR^N / \gen\one \to \RR^A / \gen\one \cong \RR^2 $ for all subsets
  $ A \subset N $ with $ |A|=3 $.
\end{remark}

\begin{remark}[Reconstruction in the non-constant coefficient case]
    \label{rem-nonconstant}
  In this paper we only consider the realizability by algebraic curves over a
  field with a trivial valuation. For curves over a non-Archimedean valued
  field, the so-called ``non-constant coefficient case'', one has to replace
  the $1$-dimensional fan cycles of Construction \ref{constr-intersection} by
  $1$-dimensional balanced polyhedral complexes modulo refinements as \eg in
  \cite[Section 1.1]{Rau09}. For these more general cycles there is a
  push-forward along a tropical morphism $f$ as well: one first has to choose a
  subdivision of the given cycle which is fine enough for the images of its
  cells under $f$ to form a polyhedral complex. Then one assigns to each such
  $1$-dimensional image cell $ \tau' $ the multiplicity $ \sum_{\tau: f(\tau) =
  \tau'} \lambda_\tau \, [\ZZ w : \ZZ f(u_\tau)] $, where $ \lambda_\tau $ is
  the multiplicity of $ \tau $, and $ u_\tau $ and $w$ are primitive integer
  vectors in the directions of $ \tau $ and $ \tau' $, respectively
  \cite[Section 1.3.2]{Rau09}. Comparing these multiplicities for two given
  cycles with the same arguments as in the proof above then shows that the
  statement of Corollary \ref{cor-reconstruct} also holds in this non-constant
  coefficient setting.
\end{remark}

To conclude this section, we will prove that the push-forwards occurring in
Construction \ref{cor-reconstruct} can easily be computed in terms of the sets
$ \P(C) $ of Notation \ref{notation-pofc}: To obtain $P(p^A_* C)$ one just has
to delete the coordinates not corresponding to $A$ of the elements in $P(C)$
and then add up all vectors that are positive multiples of each other.

\begin{lemma}[Projections of curves] \label{lem-pofc}
  Let $C$ be a tropical curve in $ B(M) $, and let $A$ be a basis of $M$. Then
  the set $ P(p^A_* C) $ consists exactly of the non-zero vectors of the form
    \[ \sum_{v \in \P(C): \, [p^A(v)] \in \sigma} p^A(v)
       \quad \in \ZZ^A \]
  for all $1$-dimensional cones $ \sigma $ in $ \RR^A/\gen\one $. In
  particular, we have $ \deg (p^A_* C) = \deg(C) $.
\end{lemma}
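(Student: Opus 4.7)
The plan is to analyze how the projection $p^A$ acts on each vector in $P(C)$ individually, and then match the result against the push-forward multiplicity formula from Construction \ref{constr-push}. Fix $v \in P(C)$ coming from a $1$-cone $\sigma_v$ of $C$ with multiplicity $m_v$, so that $v = m_v u_v$ where $u_v \in \ZZ^N$ is the unique primitive representative of $u_{\sigma_v}$ with minimum coordinate $0$. Assume (by refining if necessary) that $[u_v]$ lies in the relative interior of a cone $\sigma_\calF$ of $B(M)$ for some chain of flats $\calF = (F_1,\dots,F_p)$, so $u_v = \sum_{k=1}^p \alpha_k v_{F_k}$ for coefficients $\alpha_k > 0$.

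The crucial — and essentially only non-routine — step is to verify that $p^A(u_v)$ also has minimum coordinate $0$ in $\ZZ^A$, and this is precisely where the hypothesis that $A$ is a basis enters. Since $F_p \subsetneq N$ and $A$ is a basis, we have $r(F_p) < r(N) = |A|$, so $A \not\subset F_p$. Any $j \in A \setminus F_p$ then satisfies $(u_v)_j = \sum_{k : j \in F_k} \alpha_k = 0$, which shows that $p^A(u_v)$ has a vanishing coordinate while all its remaining coordinates are non-negative. Assuming $[p^A(u_v)] \neq 0$ in $\RR^A/\gen\one$, let $\tau$ be the $1$-dimensional cone it spans, $v_\tau \in \ZZ^A$ its primitive representative with minimum coordinate $0$, and $c_v$ the positive integer with $[p^A(u_v)] = c_v[v_\tau]$. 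The identity $p^A(u_v) = c_v v_\tau + \lambda\, \mathbf{1}_A$ then forces $\lambda = 0$ by comparing minimum coordinates, and hence $p^A(v) = m_v c_v v_\tau$ as an equality in $\ZZ^A$; similarly, if $[p^A(u_v)] = 0$ then $p^A(v) = 0$.

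With this in hand, both parts of the lemma follow. Summing $p^A(v) = m_v c_v v_\tau$ over all $v \in P(C)$ with $[p^A(v)] \in \tau$ yields $\sum_v p^A(v) = \bigl(\sum_v m_v c_v\bigr) v_\tau$, while the push-forward formula of Construction \ref{constr-push} gives $m_{p^A_* C}(\tau) = \sum_v m_v \cdot [\ZZ u_\tau : \ZZ p^A(u_{\sigma_v})] = \sum_v m_v c_v$, matching exactly the description of the element of $P(p^A_* C)$ attached to $\tau$; conversely, each $1$-cone of $p^A_* C$ arises from a nonempty such collection. For the degree statement, summing the relation over all of $P(C)$ (with the convention that $v$'s projecting to zero contribute $p^A(v) = 0$) gives $\sum_{w \in P(p^A_* C)} w = \sum_{v \in P(C)} p^A(v) = p^A(\deg(C) \cdot \one) = \deg(C) \cdot \mathbf{1}_A$ by Lemma \ref{lem-degree} applied to $C$, and a second application of Lemma \ref{lem-degree} now to $p^A_* C$ yields $\deg(p^A_* C) = \deg(C)$.
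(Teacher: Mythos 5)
Your proof is correct and follows essentially the same path as the paper: use the basis hypothesis to produce a coordinate of $A$ on which the projected ray vector vanishes (ensuring the normalization required by Notation~\ref{notation-pofc}), then match multiplicities against the push-forward formula of Construction~\ref{constr-push}, and conclude the degree statement from Lemma~\ref{lem-degree}. The only small difference is that the paper picks a single vanishing coordinate $i \in A$ depending only on the target ray $\sigma$ (via the projected chain $\calF \cap A$), whereas you normalize each $p^A(u_v)$ individually using $A \setminus F_p$ and then deduce that the sum is a positive multiple of the primitive vector $v_\tau$ --- both routes are equally valid.
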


\begin{proof}
  We will show first that the vectors stated in the proposition satisfy the
  normalization requirement of Notation \ref{notation-pofc}, \ie that their
  minimal coordinate is $0$. Let $ \sigma $ be a $1$-dimensional cone in $
  \RR^A/\gen\one $ and $ v \in \P(C) $ with $ [p^A(v)] \in \sigma $. Moreover,
  let $ \calF $ and $ \calF' $ be the unique chains of flats of $M$ and $ M|_A
  $, respectively, such that $ [v] $ lies in the relative interior of the cone
  $ \sigma_\calF $ of $ B(M) $ and $ \sigma \backslash \{0\} $ lies in the
  relative interior of the cone $ \sigma_{\calF'} $ of $ B(M|_A) $. By Lemma
  \ref{lem-project} \ref{lem-project-b} we then have $ p^A(\sigma_\calF) =
  \sigma_{\calF'} $, and thus $ \calF' = \calF \cap A $. Now choose $ i \in A $
  not contained in any flat in $ \calF' $. Then $i$ cannot be contained in any
  flat $F$ of $ \calF $ either, because otherwise $ \calF' = \calF \cap A $
  implies $ A \subset F $ and hence $ r(F) \ge r(A) = r(N) $, yielding the
  contradiction $ F=N $. By Construction \ref{constr-matroid} this means that
  the $i$-th coordinate is minimal, and thus $0$, in $v$. As $i$ does not
  depend on $v$ but only on $ \sigma $, it follows that the minimal coordinate
  is $0$ in all the vectors stated in the lemma.

  Next, let us check that the multiplicity of each cone $ \sigma $ as above
  in $ p^A_* C $ is correct. As the minimal coordinate of each $ v \in \P(C) $
  with $ [p^A(v)] \in \sigma $ is $0$, by Construction \ref{constr-push} the
  cone $ \sigma $ receives a multiplicity of
    \[ m_C(v) \cdot [\RR p^A(u) \cap (\ZZ^A/\gen\one) : \ZZ p^A(u)]
       = \gcd \{ v_i: i \in N \} \cdot
         \frac {\gcd\{ v_i: i \in A \}}{\gcd\{ v_i: i \in N \}}
       = \gcd \{ v_i: i \in A \} \]
  from $v$, where $u$ denotes the primitive integral vector in $ \ZZ^N/\gen\one
  $ in the direction of $ [v] $. But as the minimal coordinate of $ p^A(v) $ is
  $0$ as well, this is exactly the integer length of $ [p^A(v)] $ in $
  \ZZ^A/\gen\one $. So the multiplicities of the rays are indeed correct.
  
  Finally, the statement about the degree now follows immediately from Lemma
  \ref{lem-degree}.
\end{proof}

  \section{Relative Realizability} \label{sec-realize}

\subsection{Tropicalization} \label{subsec-trop}

As explained in the introduction, we are concerned with a particular tropical
inverse problem, \ie a question of which tropical cycles are realizable as
tropicalizations of algebraic varieties satisfying certain given conditions. To
describe our relative realization problem, we will use the following setup.

\begin{notation}\label{basicnotation}
  Throughout this section, let $K$ be an algebraically closed field, and fix
  $n\in \N$. We denote by $R=K[x_0,\ldots,x_n]$ the polynomial ring, and by
  $S=K[x_0^{\pm 1},\ldots,x_n^{\pm 1}]$ the Laurent polynomial ring in $n+1$
  variables over $K$. Consider the standard grading on $R$ and $S$, and let
  $S_0=K[\frac{x_i}{x_j}: 0\leq i,j\leq n]$ be the $K$-algebra of elements of
  degree $0$ in $S$. Let $X=\Spec(S_0)$ be the $n$-dimensional torus over $K$.
  Algebraic varieties and curves are always assumed to be irreducible.
\end{notation}

\begin{construction}[Tropicalization of varieties]\label{constr-trop}
  Let $X$ be the torus as in Notation \ref{basicnotation}. We define a
  tropicalization of subvarieties of $X$ to $\R^{n+1}/\left\langle
  \bf{1}\right\rangle$ as follows. Let $f=\sum_{\nu} a_{\nu}x^{\nu}\in S_0$
  with $a_{\nu}\in K$ (for $\nu\in \Z^{n+1}$ such that $\sum_i \nu_i=0$). For
  $\omega\in \R^{n+1}$ let $c_\omega=\min_{\nu:\,a_\nu \neq 0} \omega \cdot \nu
  $, where we use the standard scalar product on $\R^{n+1}$. Then we set
  $\inom_{\omega}(f)=\sum_{\nu:\, \omega\cdot \nu=c_\omega} a_{\nu} x^{\nu}$ to
  be the initial polynomial with respect to $\omega $, and for an ideal
  $I\subset S_0$ we set $\inom_{\omega}(I)=(\inom_{\omega}(f):f\in I)$.

  For a subvariety $Y\subset X$ given by a prime ideal $P\subset S_0$ we
  consider the set $\trop(Y)=\left\{\omega\in \R^{n+1}: \inom_{\omega}(P)\neq
  S_0\right\}$. This is the underlying set of a polyhedral fan in $\R^{n+1}$
  induced by the Gr\"obner fan of $PS\cap R$. For every maximal cone
  $\sigma$ in this fan we define a multiplicity
    $$ \mult(\sigma)=\sum_{\inom_{\omega}(P)\subset Q}
       \ell((S_0/\inom_{\omega}(P))_Q), $$
  where $\omega$ is any element of the relative interior of $\sigma$, the sum
  is taken over all minimal primes $Q$ of $\inom_{\omega}(P)$, and $\ell(M)$
  denotes the length of the $S_0$-module $M$. By \cite{SP} we know that
  $\trop(Y)$ is a balanced polyhedral fan with these multiplicities, and thus a
  tropical variety in the sense of Notation \ref{not-cycles}.

  As $\left\langle \bf{1}\right\rangle$ is contained in the lineality space of
  $\trop(Y)$ it is in fact more natural to consider $\trop(Y)$ in
  $\R^{n+1}/\left\langle \bf{1}\right\rangle$. Hence, from now on, by the
  \emph{tropicalization} $ \trop(Y) $ of $Y$ we will always mean this balanced
  polyhedral fan in $\R^{n+1}/\left\langle \bf{1}\right\rangle$. We have $\dim
  Y = \dim \, \trop(Y)$ after this identification by \cite[Theorem 2.1.2]{SP}
  together with the first main result of \cite{BG}.
\end{construction}

\begin{example}[Linear varieties] \label{ex-trop-linear}
  Let $L$ be a linear ideal in $S$, \ie an ideal that can be generated by $ n-k
  $ independent linear forms $ l_1,\dots,l_{n-k} $ in $x_0,\ldots,x_n$, where
  $ k+1 = \dim(S/L) $. With $ L_0 = L \cap S_0 $ we then have $ \dim (S_0/L_0)
  = k $. We will call $ E = \Spec(S_0/L_0) $ a $k$-dimensional \emph{linear
  variety} in the torus $ X = \Spec (S_0) $.

  The tropicalization of $E$ is easy to describe: let $Q$ be a $
  (k+1)\times (n+1) $ matrix whose rows span the zero set of $(l_1,\dots,
  l_{n-k}) $, and denote by $ M(L) $ the matroid on the columns of $Q$ in the
  sense of \cite[Proposition 1.1.1]{Oxl}. Then $\trop(E)$ is exactly the
  associated matroid fan $B(M(L))$ as in Construction \ref{constr-matroid},
  see \cite[Theorem 1]{AK}. For example, if $L$ and thus also $Q$ is general,
  we obtain for $ M(L) $ the uniform matroid of rank $k+1$ on $n+1$ elements,
  and consequently $ \trop(E) = B(M(L)) = L^n_k $ as in Example
  \ref{ex-matroid}.
\end{example}

In the case of curves, let us extend the tropicalization map from varieties
to cycles.

\begin{definition}[Tropicalization of $1$-cycles] \label{def-tropcycles}
  Let $E$ be a subvariety of the torus $X$.
  \begin{enumerate}
  \item \label{def-tropcycles-a}
    We denote by $ Z_1(E) $ the group of $1$-cycles in $E$ in the sense of
    \cite{F}, \ie the free abelian group generated by all curves in $E$.
    Moreover, let $ Z_1^+(E) \subset Z_1(E) $ be the subset of effective
    cycles. By abuse of notation, for a curve $ Y \subset E $ we will also
    write $ Y \in Z_1^+(E) $ for the $1$-cycle whose multiplicity is $1$ on $Y$
    and $0$ on all other curves.
  \item \label{def-tropcycles-b}
    For $ Y \in Z_1(E) $ we define the \emph{degree} $ \deg(Y) \in \ZZ $ of $Y$
    to be the intersection product of $Y$ with a general hyperplane in $X$.
    The subsets of $ Z_1(E) $ and $ Z_1^+(E) $ of cycles of degree $d$ will be
    denoted $ (Z_1(E))_d $ resp.\ $ (Z_1^+(E))_d $.
  \item \label{def-tropcycles-c}
    We extend the tropicalization of Construction \ref{constr-trop} by
    linearity to a group homomorphism
      \[ \qquad\quad
         \Trop: Z_1(E) \longrightarrow Z_1^{\trop}(\trop(E)). \]
  \end{enumerate} 
\end{definition}

\begin{lemma}[Tropicalization preserves the degree] \label{lem-tropdeg}
  For any subvariety $ E \subset X $ and cycle $ Y \in Z_1(E) $ we have $ \deg
  (\Trop(Y)) = \deg(Y) $, with the tropical degree as in Definition
  \ref{def-degree}.
\end{lemma}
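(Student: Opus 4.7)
The plan is to reduce to the case of an irreducible curve and then use Lemma~\ref{lem-degree} to identify the tropical degree of $\Trop(Y)$ with a divisor-degree computation on a projective closure of $Y$, which manifestly recovers $\deg(Y)$.

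First I would reduce to the case where $Y$ is an irreducible curve in $X$. The map $\Trop$ is a homomorphism by Definition~\ref{def-tropcycles}~(c), both the classical degree and the tropical degree (via the intersection product of Construction~\ref{constr-intersection}) are linear in their cycle argument, so it suffices to treat the case of a single irreducible curve $Y$.

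Next I would pass to a normalized projective closure. Let $\bar{Y} \subset \PP^n$ be the closure of $Y$ and $\pi\colon \tilde{Y} \to \bar{Y}$ its normalization. For each point $p \in \tilde{Y}$ with $\pi(p) \in \bar{Y}\setminus X$, define $w_p \in \ZZ^{n+1}$ by $(w_p)_i = \operatorname{ord}_p(x_i)$, viewing the homogeneous coordinate $x_i$ as a local section of $\mathcal{O}_{\PP^n}(1)\vert_{\bar{Y}}$. At least one coordinate is nonvanishing at $\pi(p)$, so $\min_i (w_p)_i = 0$. The standard description of the tropicalization of a curve in a torus in terms of such ``places at infinity'' (see \cite{SP}) then identifies $P(\Trop(Y))$ with the collection of nonzero vectors obtained from the multiset $\{w_p\}$ by summing together those whose classes in $\RR^{n+1}/\gen{\one}$ are positively proportional.

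Finally I would evaluate $\sum_p w_p$ coordinate by coordinate. For fixed $i$, any $q \in \tilde{Y}$ with $\pi(q) \in X$ satisfies $\operatorname{ord}_q(x_i) = 0$, so
\[
\sum_{p\colon \pi(p)\notin X} (w_p)_i \;=\; \sum_{q \in \tilde{Y}} \operatorname{ord}_q(x_i) \;=\; \deg\bigl(\operatorname{div}(x_i)\bigr) \;=\; \deg \mathcal{O}(1)\vert_{\bar{Y}} \;=\; \deg(Y).
\]
Thus $\sum_{v \in P(\Trop(Y))} v = \deg(Y)\cdot\one$, and Lemma~\ref{lem-degree} yields the claim. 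The main obstacle is the ``places at infinity'' identification of $P(\Trop(Y))$, which is standard but nontrivial; depending on the desired level of self-containment one either cites it from \cite{SP} or derives it directly by analyzing initial ideals along a sufficiently generic coordinate projection of $Y$ to a one-dimensional torus.
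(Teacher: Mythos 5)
Your argument is correct, but it takes a genuinely different route from the paper's. After the same reduction by linearity to a single curve $Y$, the paper proceeds via toric intersection theory: it picks a complete unimodular fan $\Delta$ containing $\trop(Y)$ and $\trop(G)$ for a general hyperplane $G$, invokes the Fulton--Sturmfels ring homomorphism $\phi\colon A_*(X(\Delta)) \to Z^{\trop}_*(\RR^{n+1}/\gen\one)$, identifies $\phi([\overline W])=\trop(W)$ using the fundamental theorem of tropical geometry together with \cite{ST}, and then transfers $\deg([\overline G]\cdot[\overline Y])$ to $L^n_{n-1}\cdot\trop(Y)$ by multiplicativity of $\phi$, observing that $\overline G\cap\overline Y$ lies in the torus. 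Your proof instead reduces the tropical degree to the combinatorial formula of Lemma~\ref{lem-degree}, encodes $P(\Trop(Y))$ by the boundary order vectors $w_p$ on the normalization of $\overline Y\subset\PP^n$, and evaluates $\sum_p w_p = \deg(Y)\cdot\one$ coordinatewise as a divisor-degree count. Both routes import one nontrivial external fact: in your case the places-at-infinity description of the rays and multiplicities of $\trop(Y)$, which you rightly flag as the crux and which deserves a sharper citation (the precise form you use --- that after normalization one obtains $P(\Trop(Y))$ by summing the $w_p$ with positively proportional classes --- is not stated verbatim in the standard references, though it follows from the Sturmfels--Tevelev multiplicity formula applied to a tropical compactification); in the paper's case the identity $\phi([\overline W])=\trop(W)$ from \cite{FS} and \cite{ST}. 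Your approach is more elementary and sidesteps the Minkowski-weight machinery, but is intrinsically one-dimensional, whereas the paper's proof sits more naturally alongside the tropical intersection theory used throughout the article and would generalize to higher-dimensional cycles.
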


\begin{proof}
  By linearity it suffices to prove the statement for a curve $ Y \subset E $.
  Let $G$ be a general hyperplane in $X$, so that $ \trop(G) = L^n_{n-1} $ by
  Example \ref{ex-trop-linear}. Moreover, let $ \Delta $ be a complete
  unimodular fan in $ \RR^{n+1}/\gen\one $ containing $ \trop(Y) $ and $
  \trop(G) $ as subfans. Then the corresponding toric variety $ X(\Delta) $
  is complete and smooth. Hence, by \cite[Theorem 3.1]{FS} there is a natural
  ring homomorphism $ \phi: A_*(X(\Delta)) \to Z^{\trop}_*(\RR^{n+1}/\gen\one)
  $, where $A_*(X(\Delta))$ denotes the Chow homology of $X(\Delta)$, and the
  ring structures are given by the algebraic resp.\ tropical intersection
  product. By the fundamental theorem of tropical geometry \cite{SP} together
  with \cite[Corollary 3.15]{ST} we know that $\phi([\overline W])=\trop(W)$
  for every $ W \subset X $ such that $\trop(W) $ is a subfan of $ \Delta $. In
  particular,
    $$ \phi([\overline{G}]\cdot[\overline{Y}])
       = \phi([\overline{G}]) \cdot \phi([\overline{Y}])
       = L^n_{n-1} \cdot \trop(Y) = \deg(\trop(Y)), $$
  where we identify $ Z_0^{\trop}(\RR^{n+1}/\gen\one) $ with $ \ZZ $. But $
  \overline G $ and $ \overline Y $ only intersect in the torus $X$ of
  $X(\Delta)$, since $G$ is general. So we have $ \deg([\overline{G}] \cdot
  [\overline{Y}]) = \deg(Y) $, and the result follows.
\end{proof}

In this paper we will always consider curves or $1$-cycles in a fixed
$2$-dimensional linear variety in $X$. So let us fix the following notation.

\begin{notation}[Planes] \label{not-plane}
  In the following, let $ L=(l_1,\dots,l_{n-2}) $ always be a linear ideal in
  $S$ with $ \dim (S/L) = 3 $ as in Example \ref{ex-trop-linear}, and let
  $ L_0 = L \cap S_0 $. Then $ E := \Spec(S_0/L_0) $ is a \emph{plane} in the
  torus $X$, and $ \trop(E) = B(M(L)) $ will be called a \emph{tropical plane}.
  By abuse of notation, the ideal $ L \cap R $ in $R$ will also be denoted by
  $L$.
\end{notation}

\begin{construction}[$ (Z_1^+(E))_d $ as an algebraic variety]
    \label{constr-moduli}
  We now want to give the sets $ (Z_1^+(E))_d $ of effective $1$-cycles of
  degree $d$ in the plane $E$ the structure of an algebraic variety.

  By \cite[Proposition 6.11]{Har}, every effective $1$-cycle $Y$ in $E$ is the
  divisor in $E$ of a regular function on $E$, \ie of an element of $ S_0/L_0
  $, unique up to units. Hence, $ Z_1^+(E) $ is in natural bijection with the
  set of principal ideals in $ S_0/L_0 $. As $ S_0/L_0 $ is just the degree-$0$
  part of $ S/L $, this set corresponds by extension to the set of principal
  homogeneous ideals in $ S/L $, which in turn by localization corresponds to
  the set of principal ideals in $ R/L $ generated by a homogeneous polynomial
  without monomial factors \cite[Proposition 2.2]{E}. Let $ f^Y \in R/L $ be
  a homogeneous polynomial such that the ideal $ (f^Y) \subset R/L $
  corresponds to $ Y \in Z_1^+(E) $ in this way. Note that the choice of $ f^Y
  $ is unique up to multiplication with an element of $ K^* $.

  Geometrically, the plane $E$ is a dense open subset of the projective space $
  \Proj(R/L) \cong \PP^2 $. So by taking the closure, a cycle $ Y \in Z_1^+(E)
  $ determines a cycle $ \overline Y \in Z_1^+(\Proj(R/L)) $ without components
  in coordinate hyperplanes, which is just the divisor of $ f^Y $. In
  particular, by B\'ezout's theorem we see that $ \deg f^Y = \deg Y $. So we
  get an injective map
    \[ (Z_1^+(E))_d \hooklongrightarrow \PP((R/L)_d), \;\; Y \mapsto [f^Y] \]
  to a projective space of dimension $ \frac{d(d+3)}2 $, where $ (R/L)_d $
  denotes the degree-$d$ part of $ R/L $. As explained above, its image is the
  complement of the space of polynomials without monomial factors, and thus a
  dense open subset. As $\PP((R/L)_d)$ is irreducible, this gives $
  (Z_1^+(E))_d $ the structure of an open subvariety of $ \PP((R/L)_d) $. In
  the following we will always consider $ (Z_1^+(E))_d $ as an algebraic
  variety in this way.
\end{construction}

It is now the goal of this paper to study which tropical curves in $ \trop(E) $
can be realized as tropicalizations of effective $1$-cycles in $E$, and to
describe the space of such cycles.

\begin{defn}[Relative realizability and realization space] \label{def-real}
  Let $E$ be a plane in the torus $X$ defined by a linear ideal $L$ as in
  Notation \ref{not-plane}. Moreover, let $C\in Z_1^{\trop}(\trop(E))$ be a
  tropical curve of degree $d$ as in Notation \ref{not-cycles} and Definition
  \ref{def-degree}.
  \begin{enumerate}
  \item \label{def-real-a}
    We say that $C$ is \emph{(relatively) realizable} in $E$ (or in $L$) if
    there exists an effective cycle $Y\in Z_1^+(E)$ with $ \Trop(Y)=C $ (note
    that we must have $ \deg(Y)=d $ in this case by Lemma \ref{lem-tropdeg}).
  \item \label{def-real-b}
    The subset $ \real(C) \subset (Z_1^+(E))_d $ of all effective cycles $Y$ such that
    $ \Trop(Y)=C $ is called the \emph{(relative) realization space} of $C$. We
    will see in Algorithm \ref{algorithm} that $ \real(C) $ is the complement
    of a union of hyperplanes in a linear space; its dimension will be called
    the \emph{realization dimension} $\realdim(C) $ of $C$.
  \end{enumerate}
\end{defn}

\subsection{Projecting to the plane}\label{projplane}

As realizability is completely understood in the plane case (see Lemma
\ref{lem-trop-plane}), one idea to deal with more complicated inverse problems
is to reduce them to questions about this case. In this section we will give an
equivalent description of our realization problem in terms of several dependent
realization problems in the plane, which can then be attacked algorithmically.

To relate our problem to the plane case, we will use coordinate projections
onto the plane which preserve enough information both on the algebraic and the
tropical side, such that from all images of those projections we can
reconstruct the original objects. By Corollary \ref{cor-reconstruct} these can
be chosen to be all projections onto coordinates indexed by the bases of the
matroid $M(L)$. On the algebraic side such a projection is simply a
monomorphism and can geometrically be imagined as pushing the plane $E$ in
space injectively to a coordinate plane. It can be described as follows.

\begin{construction}[Algebraic projections to the plane] \label{constr-algproj}
  Let $A=\left\{j_0,j_1,j_2\right\}\subset \left\{0,\ldots,n\right\}$ be a
  basis of $M(L)$. Let $ R^A = K[x_{j_0},x_{j_1},x_{j_2}] $ and $
  S^{A}_0=K[\frac{x_i}{x_k}:i,k=j_0,j_1,j_2]$, and consider the $K$-algebra
  monomorphism $ S^{A}_0\to S_0/L_0$ mapping $\frac{x_i}{x_k}$ to $
  \frac{x_i}{x_k}+L_0 $. This defines an injective morphism
  $\pi^A$ of affine varieties from $ E = \Spec(S_0/L_0) $ to the
  $2$-dimensional torus $ E^A := \Spec(S^{A}_0) $, and thus an injective group
  homomorphism
    \[ \pi^A_*: Z_1(E) \hooklongrightarrow Z_1(\pi^A(E)) \hooklongrightarrow
       Z_1(E^A) \]
  of the corresponding cycle groups. Note that $ \pi^A_* $ preserves degrees
  and effective cycles, and thus maps $ (Z_1^+(E))_d \subset \PP((R/L)_d) $ to
  $ (Z_1^+(E^A))_d \subset \PP(R^A_d) $. To describe $ \pi^A_* $ explicitly in
  terms of these ambient spaces, note that $A$ being a basis of $M(L) $ implies
  that the $K$-algebra homomorphism of polynomial rings
    \[ R^A \to R/L, \;\; x_i \mapsto \overline{x_i} \]
  inducing the map $ S^A_0 \to S_0/L_0 $ from above is an isomorphism. There is
  thus an inverse morphism
    \[ R/L \to R^A, \;\;
       \overline{x_i} \mapsto a_{i,j_0}x_{j_0} + a_{i,j_1}x_{j_1}
       + a_{i,j_2}x_{j_2} \]
  with unique $ a_{i,j} \in K $ such that $ x_i - a_{i,j_0}x_{j_0} -
  a_{i,j_1}x_{j_1} - a_{i,j_2}x_{j_2} \in L $ for all $i$. It maps the class of
  a polynomial $ f \in R $ to the polynomial $ f_A \in R^A $ obtained from $f$
  by replacing $ x_i $ with $ a_{i,j_0}x_{j_0} + a_{i,j_1}x_{j_1} +
  a_{i,j_2}x_{j_2} $ for all $i$, \ie by eliminating all $ x_i $ with $ i
  \notin A $ modulo $L$. The map $ \pi^A_*: (Z_1^+(E))_d \to (Z_1^+(E^A))_d $
  is thus obtained by restriction from the isomorphism
    \[ \PP((R/L)_d) \to \PP(R^A_d), \;\; [\overline f] \mapsto [f_A]. \]
\end{construction}

The main idea of our algorithm deciding the relative realizability problem
relies on the fact that projection commutes with tropicalization. More
precisely, we have the following commutative diagram described in \cite[Theorem
1.1]{ST}:

\begin{theorem}[Sturmfels-Tevelev]\label{tevelev}
  For every basis $A$ of $M(L)$ the diagram

  \def\message#commdiag{}%
  {\begin{center}\input{pics/commdiag}\end{center}}

  commutes, where $ p^A $ is the projection as in Construction
  \ref{constr-project}.
\end{theorem}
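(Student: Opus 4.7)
The plan is to exploit the $\ZZ$-linearity of $\Trop$, $\pi^A_*$, and $p^A_*$ to reduce the commutativity to the case of a single irreducible curve $Y \subset E$; it then suffices to prove $\Trop(\pi^A_*(Y)) = p^A_*(\Trop(Y))$ as tropical $1$-cycles in $\RR^A / \gen\one$.

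The first step is to refine the injective morphism $\pi^A \colon E \to E^A$ of Construction \ref{constr-algproj} to an \emph{open immersion}: using the inverse of the isomorphism $R^A \to R/L$ established there, one sees that $E$ is identified with the open subset of $E^A$ on which the $n-2$ elements $a_{i,j_0} + a_{i,j_1}(x_{j_1}/x_{j_0}) + a_{i,j_2}(x_{j_2}/x_{j_0})$ of $S^A_0$ (for $i \notin A$) are all non-zero. Consequently $\pi^A|_Y$ is birational onto its image $\pi^A(Y)$, so the push-forward cycle $\pi^A_*(Y)$ equals $[\pi^A(Y)]$ with multiplicity one.

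The second step is to invoke the Sturmfels-Tevelev push-forward formula. Extend $\pi^A$ to the torus morphism $\varphi \colon X \to E^A$ induced by the inclusion $S^A_0 \hookrightarrow S_0$; by construction its tropicalization is precisely the lattice projection $p^A$ of Construction \ref{constr-project}, and $\varphi|_E = \pi^A$. Viewing $Y$ as a curve in the ambient torus $X$, \cite[Theorem 1.1]{ST} asserts that $p^A_*(\trop(Y))$ equals $[K(Y):K(\varphi(Y))] \cdot \trop(\varphi(Y))$ whenever $\varphi|_Y$ is generically finite, and vanishes otherwise. By the previous step this degree is $1$ and $\varphi(Y) = \pi^A(Y)$, giving
\[ p^A_*(\Trop(Y)) \,=\, \trop(\pi^A(Y)) \,=\, \Trop(\pi^A_*(Y)), \]
which is the required identity.

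The substance of the argument lives inside the cited Sturmfels-Tevelev formula, which matches tropical lattice indices (as in Construction \ref{constr-push}) to algebraic degrees of field extensions via an initial-ideal computation. The only non-trivial thing to verify on our side is the open-immersion claim for $\pi^A$, which in turn rests essentially on $A$ being a basis of $M(L)$ through the algebraic identification $R^A \cong R/L$.
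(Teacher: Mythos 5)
Your argument is correct and follows the paper's own route: the paper gives no independent proof of this statement but simply cites \cite[Theorem~1.1]{ST}, and your write-up is exactly the natural unwinding of that citation — reduce by linearity to a single curve, observe that $\pi^A$ is an open immersion (so $\varphi|_Y$ has degree $1$), and invoke the Sturmfels--Tevelev push-forward formula for the monomial map $\varphi\colon X\to E^A$ whose tropicalization is $p^A$.
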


Note that all maps in this diagram preserve the degree of cycles by Lemma
\ref{lem-pofc} and Lemma \ref{lem-tropdeg}. Moreover, note that $
p^A_*(\trop(E)) $ is simply the plane $ \RR^A/\gen\one \cong \RR^2 $. We can
thus reduce our relative realizability problem to a finite number of dependent
realizability problems in the plane case, as stated in the following theorem.

\begin{theorem}[Tropicalization by projections] \label{mainthm}
  Let $C\subset \trop(E)$ be a $1$-dimensional tropical cycle, and let $Y \in
  Z_1(E)$. Then the following are equivalent:
  \begin{enumerate}
  \item \label{mainthm-a}
    $\Trop(Y)=C$,
  \item \label{mainthm-b}
    $\Trop(\pi^A_*(Y))=p^{A}_*(C)$ for every basis $A$ of $M(L)$.
  \end{enumerate}
\end{theorem}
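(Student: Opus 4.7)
The plan is to derive both implications directly from the two main tools developed earlier: the Sturmfels--Tevelev commutative diagram (Theorem~\ref{tevelev}) and the reconstruction result for $1$-cycles in matroid fans from their coordinate projections (Corollary~\ref{cor-reconstruct}). Since $Y\in Z_1(E)$ gives $\Trop(Y)\in Z_1^{\trop}(\trop(E))=Z_1^{\trop}(B(M(L)))$, and since $C\subset\trop(E)=B(M(L))$ by hypothesis, both $\Trop(Y)$ and $C$ lie in the group to which Corollary~\ref{cor-reconstruct} applies.

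For the direction \ref{mainthm-a}~$\Rightarrow$~\ref{mainthm-b}, I would simply apply $p^A_*$ to the equality $\Trop(Y)=C$ and then invoke Theorem~\ref{tevelev}, which gives $\Trop(\pi^A_*(Y))=p^A_*(\Trop(Y))=p^A_*(C)$ for every basis $A$ of $M(L)$.

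For the direction \ref{mainthm-b}~$\Rightarrow$~\ref{mainthm-a}, I would again use Theorem~\ref{tevelev} to rewrite the assumption as $p^A_*(\Trop(Y))=\Trop(\pi^A_*(Y))=p^A_*(C)$ for every basis $A$ of $M(L)$. Since $\Trop(Y)$ and $C$ are both $1$-cycles in $B(M(L))$ with identical pushforwards under all the coordinate projections $p^A$ indexed by bases of $M(L)$, Corollary~\ref{cor-reconstruct} forces $\Trop(Y)=C$.

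There is no real obstacle: the theorem is essentially a reformulation of Corollary~\ref{cor-reconstruct} in light of the commutativity of tropicalization and projection. The only thing to be careful about is that the statement of Corollary~\ref{cor-reconstruct} was formulated for cycles in $Z_1^{\trop}(B(M))$, so one should verify at the outset that $\Trop(Y)$ is supported in $\trop(E)=B(M(L))$ (which is immediate from $Y\subset E$ and the definition of tropicalization in Construction~\ref{constr-trop}), ensuring that the reconstruction corollary legitimately applies to both $\Trop(Y)$ and $C$. Everything else is a short two-line application of the preceding results.
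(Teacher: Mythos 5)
Your proof is correct and follows exactly the same route as the paper's: Theorem~\ref{tevelev} gives \ref{mainthm-a}~$\Rightarrow$~\ref{mainthm-b} immediately, and the reverse implication combines Theorem~\ref{tevelev} with Corollary~\ref{cor-reconstruct} applied to the two $1$-cycles $\Trop(Y)$ and $C$ in $B(M(L))$. The only addition is your explicit remark that $\Trop(Y)\subset\trop(E)$, which the paper leaves implicit; this is a harmless extra sanity check.
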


\begin{proof}
  By Theorem \ref{tevelev} it is clear that \ref{mainthm-a} $ \Rightarrow $
  \ref{mainthm-b}. We show \ref{mainthm-b} $ \Rightarrow $ \ref{mainthm-a}: For
  every basis $A$ of $M(L)$ we have $\Trop(\pi^A_*(Y))=p^{A}_{*}(\Trop(Y))$ by
  Theorem \ref{tevelev}. Hence, $p^{A}_{*}(\Trop(Y))=p^{A}_*(C)$ for every
  basis $A$ for the two $1$-cycles $\Trop(Y)$ and $C$, which are both contained
  in $\trop(E)$. By Corollary \ref{cor-reconstruct} this implies $\Trop(Y)=C$.
\end{proof}

\subsection{The plane case: Newton polytopes} \label{subsec-newton}

To study the realization problem in the plane $ \RR^3/\gen\one $ we can use
Newton polytopes. To describe this setup, let us consider the
case $ n=2 $, \ie $ R = K[x_0,x_1,x_2] $, $ S_0 = K[(\frac{x_1}{x_0})^{\pm 1},
(\frac{x_2}{x_0})^{\pm 1}] $, $ E=X=\Spec(S_0) $ is the $2$-dimensional torus,
and thus $ \trop(E) = \RR^3/\gen\one $. Moreover, let $ H = \{ v \in \R^3:
v_0+v_1+v_2=0 \} $ be the vector space dual to $ \RR^3/\gen\one $, with dual
lattice $ \{ v \in \ZZ^3: v_0+v_1+v_2=0 \} $. For $ d \in \NN $ we set
$ H_d = \{ v \in \R^3: v_0+v_1+v_2=d \} $ and $ \Delta_d = \conv \{
(d,0,0), (0,d,0), (0,0,d) \} \subset H_d $.

\begin{construction}[Inner normal fans] \label{constr-normfan}
  Let $P$ be a lattice polytope in $H_d $ for some $d\in\NN$ with $\dim P\geq
  1$. We consider the \emph{inner normal fan} $N(P)\subset \R^3/\left\langle
  {\bf 1}\right\rangle$ of $P$ in the sense of \cite[Example 7.3]{Zie},
  possibly refined such that the origin is a cone. For each $1$-dimensional
  cone $\sigma$ in $N(P)$ we define the multiplicity $m(\sigma)$ to be the
  lattice length of the corresponding edge of $P$. Then $ N(P) $ is a tropical
  curve in $ \RR^3/\gen\one $, and for a given $d$ the assignment $ P \mapsto
  N(P) $ yields a one-to-one correspondence between lattice polytopes in $H_d
  $ of dimension at least $1$ up to translation and plane tropical curves
  \cite[Corollary 2.5]{MI}.
\end{construction}

\begin{definition}[Newton polytopes of polynomials, $1$-cycles, and tropical
    curves] \label{def-newton} \leavevmode \vspace{-0.5ex}
  \begin{enumerate}
  \item \label{def-newton-a}
    The \emph{Newton polytope} of a homogeneous polynomial $ f = \sum_{\nu}
    a_{\nu}x^{\nu} \in R $ of degree $d$ is defined to be $ \New(f) = \conv
    \{\nu: a_{\nu}\neq 0 \} \subset \Delta_d \subset H_d $.
  \item \label{def-newton-b}
    The \emph{Newton polytope} $ \New(Y) $ of an effective $1$-cycle $ Y \in
    Z_1^+(E) $ of degree $d$ is defined to be the Newton polytope of a
    polynomial $ f \in R_d $ corresponding to $Y$ via the inclusion $
    (Z_1^+(E))_d \subset \PP(R_d) $ of Construction \ref{constr-moduli}.
  \item \label{def-newton-c}
    The \emph{Newton polytope} $ \New(C) $ of a tropical curve $C$ in $
    \RR^3/\gen\one $ is the unique lattice polytope in $ \RR^3 $ such that
    \begin{itemize}
    \item the inner normal fan of $ \New(C) $ is $C$ (this fixes the polytope
      up to translation by Construction \ref{constr-normfan}),
    \item $ \New(C) \subset \Delta_d $ and meets all three sides of $ \Delta_d
      $ for some $d$.
    \end{itemize}
  \end{enumerate}
\end{definition}

\begin{lemma}[Tropicalization of plane $1$-cycles] \label{lem-trop-plane}
  Let $ Y \in Z_1^+(E) $ and $ C \in Z_1^{\trop}(\RR^3/\gen\one) $. Then
  $ \Trop(Y) = C $ if and only if $ \New(Y) = \New(C) $.

  In particular, every tropical curve in $ \RR^3/\gen\one $ is realizable, and
  the number $d$ in Definition \ref{def-newton} \ref{def-newton-c} is the
  degree of $C$.
\end{lemma}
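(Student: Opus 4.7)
The plan is to reduce the statement to the Kapranov hypersurface tropicalization theorem, combined with the polytope-to-curve bijection of Construction \ref{constr-normfan}. Set $d = \deg Y$. By Construction \ref{constr-moduli} the cycle $Y$ is the divisor of a homogeneous polynomial $f^Y \in R_d$ without monomial factors, and by Definition \ref{def-newton}(b) we have $\New(Y) = \New(f^Y)$. Since $E = X$ is the torus in this subsection, I would view $f^Y$ as a Laurent polynomial in $S_0$, so that $V(f^Y) = Y$ as cycles. The no-monomial-factor condition means that $x_i$ does not divide $f^Y$ for any $i \in \{0,1,2\}$, and thus $\New(f^Y) \subset \Delta_d \subset H_d$ meets each of the three sides $\{\nu_i = 0\}$ of $\Delta_d$.

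The next step is the hypersurface tropicalization theorem (e.g.\ \cite[Chapter 3]{SP}): for a Laurent polynomial $g \in S_0$ the tropical hypersurface $\Trop(V(g))$ equals the inner normal fan $N(\New(g))$, with the multiplicity on each ray equal to the lattice length of the corresponding edge of $\New(g)$. Applied to $g = f^Y$ this gives $\Trop(Y) = N(\New(Y))$ in $\RR^3/\gen\one$. Combining this with the one-to-one correspondence $P \mapsto N(P)$ from Construction \ref{constr-normfan} --- which bijects lattice polytopes in $H_d$ of dimension at least $1$ modulo translation with plane tropical curves --- and using that both $\New(Y)$ and $\New(C)$ are the normalized representatives inside $\Delta_d$ touching all three sides, one concludes
\[
  \Trop(Y) = C \;\Longleftrightarrow\; N(\New(Y)) = N(\New(C))
  \;\Longleftrightarrow\; \New(Y) = \New(C).
\]
The matching of the integer $d$ on both sides is automatic: an equality of polytopes forces them to lie in the same $H_d$, while $\Trop(Y) = C$ forces $\deg Y = \deg C$ by Lemma \ref{lem-tropdeg}.

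For the ``in particular'' assertions, realizability follows by reversing this construction: given $C$, choose any homogeneous polynomial $f \in R_d$ whose support contains the vertices of $\New(C)$ and is contained in $\New(C) \cap \ZZ^3$ (for instance, $f = \sum_v x^v$ over the vertex set, with arbitrary nonzero coefficients). Then $\New(f) = \New(C)$, the polynomial $f$ has no monomial factor because $\New(C)$ meets all three sides of $\Delta_d$, and $Y = V(f) \in Z_1^+(E)$ satisfies $\Trop(Y) = N(\New(f)) = C$ by the main argument. Finally, applying Lemma \ref{lem-tropdeg} to this $Y$ gives $\deg C = \deg Y = d$, so the integer $d$ of Definition \ref{def-newton}(c) is indeed $\deg(C)$. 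The main obstacle is the bookkeeping that promotes the translation-quotient bijection of Construction \ref{constr-normfan} to an honest equality of lattice polytopes, which is exactly what the normalization inside $\Delta_d$ together with the absence of monomial factors in $f^Y$ achieves.
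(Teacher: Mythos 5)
Your proposal is correct and follows essentially the same route as the paper: both appeal to the fact that the tropicalization of a plane hypersurface is the weighted inner normal fan of its Newton polytope (the paper cites \cite[Corollary~2.1.2]{EKL06}, you cite Kapranov's theorem), then use the normalization inside $\Delta_d$ together with the no-monomial-factor condition to upgrade the translation-quotient bijection of Construction~\ref{constr-normfan} to an equality of polytopes, and finally reverse the construction and invoke Lemma~\ref{lem-tropdeg} for the ``in particular'' statements.
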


\begin{proof}
  By \cite[Corollary 2.1.2]{EKL06} the tropicalization of $Y$ is just the inner
  normal fan of $ \New(Y) $. So if $ \New(Y)=\New(C) $ then $ \Trop(Y) $ equals
  the inner normal fan of $ \New(C) $, which is $C$. Conversely, if $
  \Trop(Y)=C $, the inner normal fan of $ \New(Y) $ is $C$. Moreover, this
  polytope is the Newton polytope of a homogeneous polynomial without monomial
  factors by Construction \ref{constr-moduli}, so it is contained in $ \Delta_d
  $ with $ d = \deg(Y) $ and meets all three sides of $ \Delta_d $. Hence, $
  \New(Y) = \New(C) $ by Definition \ref{def-newton} \ref{def-newton-c}.

  In particular, if $C$ is a tropical curve in $ \RR^3/\gen\one $ with $
  \New(C) \subset \Delta_d $ we can choose a homogeneous polynomial of degree
  $d$ with Newton polytope $ \New(C) $. As this polynomial then does not have a
  monomial factor, it determines a cycle $ Y \in (Z_1^+(E))_d $ with $ \New(Y)
  = \New(C) $, and thus with $ \Trop(Y) = C $. So $C$ is realizable, and by
  Lemma \ref{lem-tropdeg} we have $ d = \deg(Y) = \deg(C) $.
\end{proof}

Note that this result gives us explicit conditions on an effective $1$-cycle
realizing a tropical curve. This will play an important role in our algorithm.

\subsection{Computing Realizability} \label{subsec-comp}

Let us now collect our results to obtain an algorithm to detect realizability
and compute the realization space and dimension of a curve $C$ in a tropical
plane $ \trop(E) $.

Note that $C$ can only be realized by effective cycles of degree $ d=\deg(C) $
by Lemma \ref{lem-tropdeg}. So to deal with our problem algorithmically we
first of all need to choose coordinates on the space $ (Z_1^+(E))_d $ of such
cycles. This is easily achieved, since we have $ (Z_1^+(E))_d \subset
\PP((R/L)_d) $ as an open subset by Construction \ref{constr-moduli}, and $
\PP((R/L)_d) \cong \PP(R^B_d) $ for a basis $B$ of the matroid $ M(L) $ and $
R=K[x_i: i \in B] $ by Construction \ref{constr-algproj}. So we can choose
homogeneous coordinates for the projective space $ \PP(R^B_d) $, \ie the
coefficients of a homogeneous polynomial of degree $d$ in three variables $ x_i
$ with $ i \in B $, as homogeneous coordinates for $ (Z_1^+(E))_d $. The
resulting algorithm to detect realizability and compute the realization space
and dimension can be described as follows.

\begin{alg}[Realizability of curves in a tropical plane] \label{algorithm}
  Consider a plane $E\subset X$ given by a linear ideal $L\subset S$ with
  $\dim(S/L)=3$, and let $C$ be a tropical curve in $\trop(E)$.
  \begin{enumerate}
  \item Compute the degree of $C$: by Lemma \ref{lem-degree} this is just the
    natural number $d$ such that $\sum_{v \in \P(C)} v = d \cdot {\bf 1} $.
  \item Compute a basis $B=\left\{j_0,j_1,j_2\right\}$ of the matroid $M(L)$
    associated to $L$.
  \item Let
      $$ \qquad\quad
         f=\sum_{\nu\in \N^{3}, |\nu|=d} a_{\nu}x^{\nu}
         \in K[x_{j_0},x_{j_1},x_{j_2}], $$
    where the $a_{\nu}$ are parameters in $K$ that form the coordinates of the
    projective space $ \PP(R^B_d) $ containing our moduli space $ (Z_1^+(E))_d
    $ as explained above. More precisely, we can consider $f$ to be in the
    polynomial ring
      $$ \qquad\quad
         K [a_{\nu}:\nu\in \N^{3},|\nu|=d][x_{j_0},x_{j_1},x_{j_2}]. $$
  \item For every basis $A$ of $M(L)$ compute the polynomial $ f_A \in K[x_i: i
    \in A] $ as in Construction \ref{constr-algproj} by eliminating all $ x_i $
    with $ i \notin A $ from $f$ modulo $L$. Note that the coefficients of
    $f_A$ are linear forms in the $a_{\nu}$ determined by $L$.
  \item On the other hand, for every basis $A$ compute the tropical
    push-forward $ C_A=p^A_*(C) $ by Lemma \ref{lem-pofc}, and its Newton
    polytope $\New(C_A)$ as in Definition \ref{def-newton} \ref{def-newton-c}.
    This can \eg be done explicitly by Lemma \ref{lemma:newton}. Note that $
    \deg(C_A) = d $ for all $A$ by Lemma \ref{lem-pofc}, and thus $ \New(C_A)
    \subset \Delta_d $ by Lemma \ref{lem-trop-plane}.
  \item Obtain conditions on the $a_{\nu}$ to ensure that $ \New(f_A) =
    \New(C_A) $ for all bases $A$: if $ f_A=\sum_{\nu}b_{\nu}x^{\nu} $, this
    means that $b_{\nu}=0$ if $\nu\notin \New(C_A)$, whereas $b_{\nu}\neq 0$
    if $\nu$ is a vertex of $\New(C_A)$. On the other coefficients of $f_A$
    there are no conditions. This gives a set of linear equalities and
    non-equalities in the $a_{\nu}$.
  \item Let $ R(C) \subset \PP(R^B_d) \cong \PP((R/L)_d) $ be the solution set
    of these linear equalities and non-equalities. Then
    \begin{itemize}
    \item $ R(C) \subset (Z_1^+(E))_d $, \ie no polynomial in the solution set
      contains a monomial factor: if $ \overline x_i \,|\, \overline f $ in $
      R/L $ for some $ i \in \{0,\dots,n\} $ and $[\overline f] \in R(C)$
      then $ x_i \,|\, f_A $ for every basis $A$ with $ i \in A $. Hence,
      $ \New(f_A) $ does not meet all three sides of $ \Delta_d $ whereas $
      \New(C_A) $ does --- in contradiction to $ [f] \in R(C) $. So every $ [f]
      \in R(C) $ corresponds to an effective $1$-cycle $Y$ of degree $d$.
    \item $ R(C) $ describes exactly the effective $1$-cycles tropicalizing to
      $C$: this follows from Theorem \ref{mainthm}, since $ \New(f_A) =
      \New(C_A) $ means $ \New(\pi_*^A (Y)) = \New(p_*^A(C)) $ by Construction
      \ref{constr-algproj}, which in turn means $ \Trop(\pi_*^A (Y)) = p_*^A(C)
      $ by Lemma \ref{lem-trop-plane}.
    \end{itemize}
    Hence, $ R(C) $ is the realization space $ \real(C) $ of $C$.
  \end{enumerate}
  In particular, $ \real(C) \subset (Z_1^+(E))_d \subset \PP((R/L)_d) $ is the
  complement of a union of hyperplanes in a linear space. Of course, the
  algorithm can also be used to compute the dimension of $ \real(C) $.
\end{alg}

For some purposes, it is interesting to know if a tropical curve in $\trop(E)$
is relatively realizable not only by a positive cycle, but also by an
irreducible curve in $E$. Here, a cycle $Y \in Z_1(E)$ is called
\emph{irreducible} if it is defined by exactly one irreducible curve in $E$
with multiplicity one. Otherwise, we call this cycle \emph{reducible}.

To check for this irreducible realizability of a tropical curve $ C\subset
\trop(E)$, we will consider non-trivial decompositions of $C$ into $D_1$ and
$D_2$, by which we mean two positive tropical cycles $D_1,D_2\neq 0$ in
$Z_1^{\trop}(\trop(E))$ such that $D_1+D_2=C$. Note that in this case
$\deg(C)=\deg(D_1)+\deg(D_2)$.

\begin{proposition} \label{prop-irreal}
  Let $C\subset \trop(E)$ be a tropical curve with $\realdim(C)=m$ as in 
  Definition \ref{def-real}. Then there is an irreducible cycle in $E$
  tropicalizing to $C$ if and only if for every non-trivial decomposition
  $C=D_1+D_2$ of $C$ in $Z_1^{\trop}(\trop(E))$ with $\realdim(D_1)=m_1$ and
  $\realdim(D_2)=m_2$ we have $m_1+m_2<m$.
\end{proposition}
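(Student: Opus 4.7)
The plan is to identify the reducible cycles inside $\real(C)$ with a finite union of images of multiplication morphisms coming from decompositions of $C$, and then decide the existence of an irreducible realization by a dimension comparison inside the linear space containing $\real(C)$.

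By Algorithm \ref{algorithm}, $\real(C)$ is the complement of a union of hyperplanes in a projective linear subspace $V \subset \PP((R/L)_d)$ of dimension $m$; in particular $V$ is irreducible and $\real(C)$ is open and dense in $V$. A cycle $Y \in \real(C)$ is reducible exactly when its defining polynomial $f^Y \in R/L \cong R^B$ (for a basis $B$ of $M(L)$) factors as $f^Y = g \cdot h$ with $\deg g, \deg h \geq 1$. In that case the effective cycles $Y_g, Y_h$ defined by $g, h$ satisfy $\Trop(Y_g) + \Trop(Y_h) = \Trop(Y) = C$ by the linearity of $\Trop$ (Definition \ref{def-tropcycles}), giving a non-trivial decomposition of $C$ into effective tropical cycles, both of which are then automatically realizable.

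The key construction is, for every non-trivial decomposition $C = D_1 + D_2$ with $\realdim(D_i) = m_i$, the multiplication morphism
\[
  \mu_{D_1, D_2} : V_1 \times V_2 \longrightarrow V, \quad ([g], [h]) \mapsto [g h],
\]
where $V_i \subset \PP((R/L)_{d_i})$ is the ambient linear space of $\real(D_i)$ (of dimension $m_i$). I would check that $\mu_{D_1, D_2}$ indeed lands in $V$ via a Newton polytope computation: for every basis $A$ of $M(L)$, the identity $(g h)_A = g_A \cdot h_A$ gives
\[
  \New((g h)_A) = \New(g_A) + \New(h_A) \subset \New(D_{1,A}) + \New(D_{2,A}) = \New(C_A),
\]
where the last equality combines linearity of $p^A_*$ (which gives $D_{1,A} + D_{2,A} = C_A$) with the correspondence between Minkowski sums of Newton polytopes and sums of plane tropical curves (Construction \ref{constr-normfan}). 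The image $W_{D_1, D_2} := \mu_{D_1, D_2}(V_1 \times V_2)$ is a closed irreducible subvariety of $V$, and unique factorization in the polynomial ring $R^B$ forces the fibers of $\mu_{D_1, D_2}$ to be finite --- any polynomial has only finitely many factorizations as a product of two factors of prescribed degrees up to scalars --- so $\dim W_{D_1, D_2} = m_1 + m_2$.

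Combining the two steps, the reducible cycles in $\real(C)$ form exactly the set $\real(C) \cap \bigcup_{(D_1, D_2)} W_{D_1, D_2}$, where the union runs over the finitely many non-trivial decompositions of $C$ into realizable summands (finiteness because the multiplicities and rays of each $D_i$ are bounded by those of $C$). An irreducible realization of $C$ therefore exists iff $\real(C)$ is not covered by this union; since $\real(C)$ is open and dense in the irreducible $V$ of dimension $m$, this is equivalent to $V$ itself not being covered by the (closed) union, and by irreducibility of $V$ this holds iff no $W_{D_1, D_2}$ equals $V$, i.e.\ iff $m_1 + m_2 = \dim W_{D_1, D_2} < m$ for every non-trivial decomposition. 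The main obstacle in the plan is verifying the two geometric inputs on $\mu_{D_1, D_2}$: that its image lies in $V$ (Minkowski-sum compatibility with linearity of push-forward of cycles), and that its image has dimension exactly $m_1 + m_2$ (finiteness of fibers by unique factorization in $R^B$).
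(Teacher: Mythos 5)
Your proposal is correct and follows essentially the same route as the paper's proof: both reduce the question to the multiplication morphisms $\PP((R/L)_{d_1}) \times \PP((R/L)_{d_2}) \to \PP((R/L)_d)$, use finiteness of fibers (unique factorization) to compute the dimension of the images as $m_1+m_2$, and conclude by irreducibility of the ambient space of $\real(C)$. The extra care you take — passing to the closed images $W_{D_1,D_2}=\mu(V_1\times V_2)$ in the linear space $V$ and verifying via the Minkowski-sum identity for Newton polytopes that $\mu$ lands in $V$ — makes the closedness step a bit more transparent than the paper's terse assertion that $\phi(\real(D_1)\times\real(D_2))$ is closed in $\real(C)$, but the underlying argument is the same.
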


\begin{proof}
  Let $C=D_1+D_2$ be a non-trivial decomposition with $\deg(D_1)=d_1$ and
  $\deg(D_2)=d_2$, where $ d_1 + d_2 = d = \deg(C) $. We consider the morphism
  of varieties
  \begin{align*}
    \phi: \mathbb{P}((R/L)_{d_1})\times \mathbb{P}((R/L)_{d_2})
      & \longrightarrow \mathbb{P}((R/L)_d)\\
    ([\overline{f}],[\overline{g}]) & \longmapsto [\overline{fg}]. 
  \end{align*}
  As every polynomial of degree $d$ has only finitely many factorizations into
  two polynomials of degree $d_1$ and $d_2$ up to scalar multiplication, the
  map $\phi$ has finite fibers. Hence, the dimension of the image of $
  \real(D_1)\times\real(D_2)$, \ie the space of cycles in $ \real(C) $ that
  are composed of two cycles of degrees $d_1$ and $d_2$, has dimension
  $m_1+m_2$. Note that $\phi(\real(D_1)\times\real(D_2))$ is closed in
  $\real(C)$.

  If $m=m_1+m_2$ this implies that $\real(C)=\phi(\real(D_1)\times\real(D_2))$,
  since $ \real(C) $ is irreducible by Algorithm \ref{algorithm} as an open
  subset of a linear space. So in this case $C$ is only realizable by a sum of
  two cycles realizing $D_1$ and $D_2$, respectively.

  On the other hand, if $m_1+m_2<m$ for every non-trivial decomposition of $C$
  into $D_1+D_2$, the space of reducible cycles in $\real(C)$ is a proper
  closed subset of $\real(C)$. Hence, there has to be an irreducible curve in
  $\real(C)$, so in this case $C$ is irreducibly realizable.
\end{proof}

Using this, Algorithm \ref{algorithm} can be extended to check the irreducible
relative realizability of a tropical curve in $\trop(E)$:

\begin{alg} \label{algorithm-irr}
  Let $C$ be a realizable tropical curve in $\trop(E)$ and $m$ the realization
  dimension of $C$ in $E$. For any non-trivial decomposition $C = D_1 + D_2 \in
  Z_1^{\trop}(\trop(E))$, where $D_1$ and $D_2$ are two tropical curves in
  $\trop(E)$, compute the realization dimensions $m_i$ of $D_i$ in $E$ for $ i
  = 1,2$. If there is a decomposition $C = D_1 + D_2$ with $m_1 + m_2 = m$,
  then the tropical curve $C$ is not realizable by an irreducible curve in $E$;
  if $m_1 + m_2 < m$ for all decompositions $C = D_1 + D_2$, then there is an
  irreducible curve in $E$ tropicalizing to $C$.
\end{alg}

\begin{Example}[The Singular library \emph{realizationMatroids.lib}]
    \label{ex-singular}
  The above Algorithms \ref{algorithm} and \ref{algorithm-irr} deciding
  (irreducible) relative realizability are implemented in the Singular library
  \emph{realizationMatroids.lib} \cite{Sing,Win12}.

  Let $C$ be a tropical curve in $ \trop(E) = B(M(L))$, and let $ P=\P(C) $
  as in Notation \ref{notation-pofc}. The functions
  \emph{realizationDim}$(L,P)$ and \emph{irrRealizationDim}$(L,P)$ then either
  return the (irreducible) realization dimension of $C$ in $E$, or $-1$ if the
  realization space of $C$ is empty, \ie if $C$ is not relatively realizable
  (by an irreducible curve).

  Moreover, following Algorithm \ref{algorithm} we can explicitly describe the
  set of polynomials $f$ such that the ideal $L+(f)$ tropicalizes to a given
  tropical curve $C$ in $\trop(E)$. Correspondingly, the function
  \emph{realizationDimPoly}$(L,P)$ returns the realization dimension of $C$
  together with a polynomial realizing $C$ in $E$. This function works by
  checking small integer coefficients for the polynomials and may only be used
  if the characteristic of $K$ is zero. 

  The following example shows these functions for the curve $C$ in $ L^3_2 =
  \trop(V(L)) $ with $ L=(x_0+x_1+x_2+x_3) \subset K[x_0,\dots,x_3] $ and
  $ \P(C)=\{(2,2,0,0),(0,0,2,2)\} $.

  \begin{verbatim}
  > LIB "realizationMatroids.lib";
  > ring r = 0,(x0,x1,x2,x3),dp;
  > ideal L = x0+x1+x2+x3;
  > list P = list(intvec(2,2,0,0),intvec(0,0,2,2));
  > realizationDim(L,P);
  0
  > irrRealizationDim(L,P);
  -1
  > realizationDimPoly(L,P);
  0 x0^2+2*x0*x1+x1^2
  \end{verbatim}
\end{Example}

  \section{General Criteria for Relative Realizability} \label{sec-criteria}

\subsection{Relative Realizability of Cycles}

As in Notation \ref{not-plane} let $B(M(L))=\trop(E) \subset
\R^{n+1}/\left\langle \bf{1}\right\rangle$ be the matroid fan obtained by
tropicalizing a plane $E$ in a torus $X$. In this section we will show that any
tropical cycle in $B(M(L))$ is relatively realizable by a cycle in $Z_1(E)$,
\ie that the map $\Trop: Z_1(E) \to Z_1^{\trop}(\trop(E))$ of Definition
\ref{def-tropcycles} \ref{def-tropcycles-c} is surjective. In particular, this
means that any tropical curve in $B(M(L))$ is relatively realizable by a cycle
in $Z_1(E)$. To prove this claim, we start by showing that any tropical curve
in $B(M(L))$ containing at most one $1$-dimensional cone not corresponding
to a rank-1 flat is relatively realizable.

\vspace{-1.5ex}

\begin{sidepic}{prop-0} \begin{Lemma}\label{lem:positiveReal}
  Let $C$ be a tropical curve in $\trop(E)$ such that $\P(C) =
  \{v,\lambda_1 v_{F_1},\dots,\lambda_r v_{F_r}\}$, where $ [v] \in \trop(E)
  \setminus \{0\}$, $\lambda_i \in \mathbb N$ for all $i$, and $F_1,\ldots,F_r$
  are rank-1 flats of the matroid $ M(L) $ with associated vectors $
  v_{F_1},\dots,v_{F_r} $ as in Construction \ref{constr-matroid}. Then the
  tropical curve $C$ is realizable in $L$.
\end{Lemma}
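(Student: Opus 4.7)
My plan is to construct an explicit polynomial $f\in(R/L)_d$ with $d=\deg(C)>0$ whose zero locus in $E$ tropicalizes to $C$, and to verify the realization via the projection-based criterion of Theorem~\ref{mainthm}. By that theorem, combined with Lemma~\ref{lem-trop-plane} and Algorithm~\ref{algorithm}, the condition $\Trop(V(f))=C$ reduces to the Newton polytope equalities $\New(f_A)=\New(p^A_*(C))$ for every basis $A$ of $M(L)$.

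First I would unpack the combinatorics of $\P(C)$. Lemma~\ref{lem-degree} together with the balancing condition yields $v+\sum_i \lambda_i v_{F_i}=d\cdot\one$, so $v_j=d-\lambda_i$ for $j\in F_i$ and $v_j=d$ for $j$ lying in no $F_i$. The normalization $\min_j v_j=0$ combined with $[v]\neq 0$ forces $d>0$, $r\geq 1$, and some $\lambda_{i_0}=d$. I would fix such an $i_0$, choose $j_0\in F_{i_0}$ so that $v_{j_0}=0$, and extend $\{j_0\}$ to a basis $B$ of $M(L)$. Under the identification $(R/L)_d\cong R^B_d$ from Construction~\ref{constr-algproj}, I would then propose an $f$ whose Newton polytope has a distinguished vertex at the monomial $x_{j_0}^d$ and a second vertex whose displacement encodes the direction $v$ (with further monomial terms on the corresponding lattice polytope of $\Delta_d$ chosen generically from $K^*$), so that for every other basis $A$ the polytope $\New(f_A)$ unfolds into exactly the shape predicted by Lemma~\ref{lem-pofc}.

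The verification step is where the main difficulty lies. For each basis $A$, Lemma~\ref{lem-pofc} computes $\P(p^A_*(C))$ from $\P(C)$ by dropping the coordinates outside $A$ and summing the resulting parallel vectors, with the rays $\lambda_i v_{F_i}$ collapsing entirely whenever $F_i\cap A=\emptyset$; correspondingly, the shape of $\New(p^A_*(C))$ can vary between a full polygon, a degenerate segment, or even a single point. On the algebraic side, $f_A$ is obtained from $f$ by eliminating the $x_i$ with $i\notin A$ modulo $L$ as in Construction~\ref{constr-algproj}, and its Newton polytope unfolds into a different shape for each $A$. Showing that a single $f$ suffices is the core difficulty: a choice good for one basis may fail for another, so the construction must be uniform. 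The equality $\lambda_{i_0}=d$ together with the constraint that $[v]$ lies in a cone $\sigma_\calF$ of $B(M(L))$ of dimension at most $2$ are the two combinatorial inputs I expect will make the case-by-case verification go through uniformly across all bases.
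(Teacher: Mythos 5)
Your strategy --- construct an explicit polynomial $f \in (R/L)_d$ and verify $\Trop(V(f)) = C$ via the projection criterion of Theorem~\ref{mainthm} together with the Newton-polytope characterization of Lemma~\ref{lem-trop-plane} --- is exactly the paper's approach, and your preliminary combinatorics (deducing $v_j = d - \lambda_i$ for $j \in F_i$, hence that some $\lambda_{i_0} = d$ from the normalization $\min_j v_j = 0$) is correct. But the proposal stops precisely where the actual work begins, so there is a genuine gap.

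Two things are missing. First, no explicit polynomial is actually written down. You describe a polynomial with ``a distinguished vertex at $x_{j_0}^d$ and a second vertex whose displacement encodes the direction $v$, with further monomial terms chosen generically,'' but that is a description of what you want, not a construction. The paper instead chooses a chain of flats $(G_1,G_2)$ with $[v] \in \cone([v_{G_1}],[v_{G_2}])$, permutes coordinates so that $G_1 = \{0,\dots,k_1-1\}$ and $G_2 = \{0,\dots,k_2-1\}$, writes $v = a(e_0+\cdots+e_{k_1-1}) + b(e_{k_1}+\cdots+e_{k_2-1})$, and takes the four-term polynomial
\[
  f = c_0 x_0^d + c_1 x_{k_1}^d + c_2 x_0^b x_{k_2}^{d-b} + c_3 x_{k_1}^a x_{k_2}^{d-a}
\]
with generic $c \in K^4$ --- supported on only the three variables $x_0, x_{k_1}, x_{k_2}$, and with \emph{two} ``pure power'' monomials, not one. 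In particular the combinatorial input it actually leverages is the membership $[v]\in\cone([v_{G_1}],[v_{G_2}])$, not your observation $\lambda_{i_0}=d$; the latter is true but is not what drives the construction, and a polynomial with a single distinguished pure-power vertex cannot in general produce the trapezoidal projected Newton polytope $\conv((b,0,d-b),(d,0,0),(0,d,0),(0,a,d-a))$ that the paper's verification exhibits. Second, you explicitly defer the verification that a single $f$ works for every basis $A$ simultaneously, acknowledging it as ``the core difficulty.'' That verification is the bulk of the proof: the paper runs a three-case analysis according to how $A$ distributes among $\{0,\dots,k_1-1\}$, $\{k_1,\dots,k_2-1\}$, and $\{k_2,\dots,n\}$, using linear independence/dependence relations in $M(L)$ to control which substituted coefficients $\lambda_l, \mu_l, \nu_l$ vanish, and genericity of $c$ to prevent accidental cancellations from shrinking $\New(f_A)$. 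Without carrying out both the explicit construction and this case-by-case check, the argument is not complete.
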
 \end{sidepic}

\vspace{-1.5ex}

\begin{proof}
  Since $\trop(E)$ consists of the cones $\sigma_{\mathcal G}$, where $\mathcal
  G = (G_1,G_2)$ is a chain of flats in $M(L)$, and $ [v] \in \trop(E)$, there
  is a rank-1 flat $G_1$ and a rank-2 flat $G_2$ such that $ [v] \in
  \sigma_{\mathcal G} = \cone([v_{G_1}],[v_{G_2}])$. We denote the elements of
  the base set of $M = M(L)$ by $0,\ldots,n$. Applying a coordinate
  permutation, we may assume that there are $0 < k_1 < k_2 \leq n$ 
  such that $G_1 = \{0,\ldots,k_1-1\}$ and $G_2 =\{0,\ldots,k_2-1\}$. Hence,
  we have
    $$ v = a (e_0+\ldots+e_{k_1-1}) + b (e_{k_1} + \ldots + e_{k_2-1}) $$
  for some $a,b \in \mathbb N$, $ b \le a \le d := \deg(C) $, and $(a,b) \neq
  (0,0)$. With
    $$ f = c_0 x_0^d + c_1 x_{k_1}^d + c_2 x_0^b x_{k_2}^{d-b}
         + c_3 x_{k_1}^a x_{k_2}^{d-a} $$
  for any generic $ c = (c_0,\dots,c_3) \in K^4 $, we claim that $C =
  \Trop(L+(f))$. To prove this claim, we want to use Theorem \ref{mainthm} and
  therefore show that $\Newt(f_A) = \Newt(p_*^A (C))$ for all bases $A$ of $M$,
  where $f_A$ is as in Construction \ref{constr-algproj}.
  
  So let $ A=\{i,j,k\} $ a basis of $M$. By Construction \ref{constr-algproj}
  there are $ \lambda_l, \mu_l, \nu_l \in K$ for $l=i,j,k$ such that
  \begin{align*}
    x_0 - \lambda_i x_i - \lambda_j x_j - \lambda_k x_k &\quad \in L, \\
    x_{k_1} - \mu_i x_i - \mu_j x_j - \mu_k x_k &\quad \in L, \\
    x_{k_2} - \nu_i x_i - \nu_j x_j - \nu_k x_k &\quad \in L.
  \end{align*}
  Since any two different elements in $\{0,\ldots,k_1-1\}$ are contained in
  the rank-1 flat $G_1$, they are linearly dependent, as are three pairwise
  different elements in $ G_2 = \{0,\ldots,k_2-1\}$. Hence, we get the
  following 3 cases.

  \textbf{Case 1:} $i \in \{0,\ldots,k_1-1\}, j \in \{k_1,\ldots,k_2-1\}, k \in
  \{k_2,\ldots,n\}$.

  As $\{0,i\}$ and $\{k_1,i,j\}$ are linearly dependent, we have $\lambda_j
  = \lambda_k = \mu_k = 0$. It also holds that $\lambda_i, \mu_j, \nu_k \neq 0$,
  because $L$ does not contain a monomial and $\{0,k_1,k_2\}$ is a basis of
  $M$. So we have
  \begin{align*}
    f_{A}(x_i,x_j,x_k)
      &= c_0 (\lambda_i x_i)^d + c_1 (\mu_i x_i + \mu_j x_j)^d
         + c_2 (\lambda_i x_i)^b (\nu_i x_i + \nu_j x_j + \nu_k x_k)^{d-b} \\
      &\qquad + c_3 (\mu_i x_i + \mu_j x_j)^a
         (\nu_i x_i + \nu_j x_j + \nu_k x_k)^{d-a}.
  \end{align*}
  Since $\lambda_i, \mu_j, \nu_k \neq 0$ and $ c \in K^4 $ is generic, the
  coefficients of $x_i^d$, $x_j^d$, $x_i^b x_k^{d-b}$, and $ x_j^a x_k^{d-a} $
  in this polynomial are non-zero. However, the coefficient of $
  x_i^{m_i}x_j^{m_j}x_k^{m_k}$ is zero whenever $m_i < b$ and $m_k > d-a$.
  As $\P(p_*^{A}(C)) = \{(a,b,0),(d-a,0,0),(0,d-b,0),(0,0,d)\}$, we
  thus have
    $$ \Newt(f_{A}) = \conv((b,0,d-b),(d,0,0),(0,d,0),(0,a,d-a))
       = \Newt(p_*^{A}(C)). $$

  \textbf{Case 2:} $i, j \in \{k_1,\ldots,k_2-1\}, k \in \{k_2,\ldots,n\}$.

  In this case $\{0,i,j\}$ and $\{k_1,i,j\}$ are linearly dependent, and so
  we have $\lambda_k = \mu_k = 0$. Moreover, we have $\lambda_i, \lambda_j,
  \mu_i, \mu_j, \nu_k \neq 0$ as $ \{0,j\}, \{0,i\}, \{k_1,j\}, \{k_1,i\}$
  resp.\ $\{k_2,i,j\}$ are linearly independent. Since 
  \begin{align*}
    & f_{A}(x_i,x_j,x_k) = \\
    &\quad c_0 (\lambda_i x_i + \lambda_j x_j)^d
       + c_1 (\mu_i x_i + \mu_j x_j)^d
       + c_2 (\lambda_i x_i + \lambda_j x_j)^b
             (\nu_i x_i + \nu_j x_j + \nu_k x_k)^{d-b} \\
    &\quad c_3 (\mu_i x_i + \mu_j x_j)^a
             (\nu_i x_i + \nu_j x_j + \nu_k x_k)^{d-a}
  \end{align*}
  and $c \in K^4$ is generic, we have as in Case 1
    $$ \Newt(f_{A}) = \conv((b,0,d-b),(d,0,0),(0,d,0),(0,b,d-b))
       = \Newt(p_*^{A}(C)). $$

  \textbf{Case 3:} $j, k \in \{k_2,\ldots,n\}$.

  We have $(\lambda_l,\mu_l) \neq (0,0)$ for all $l=i,j,k$: if for instance
  $\lambda_i = \mu_i = 0$, then we see by suitable linear combinations of the
  polynomials $ x_0 - \lambda_j x_j - \lambda_k x_k \in L$ and $x_{k_1} - \mu_j
  x_j - \mu_k x_k \in L$ that $\{0,k_1,k\}$ and $\{0,k_1,j\}$ are linearly
  dependent. 

  Since $c \in K^4$ is generic, the coefficient of $x_i^d$ in $f_{A}$,
  \ie the sum $c_0 \lambda_i^d + c_1 \mu_i^d + c_2 \lambda_i^b \nu_i^{d-b} +
  c_3 \mu_i^a \nu_i^{d-a}$, is non-zero, as is the coefficient of $x_j^d$ and
  $x_k^d$. Hence, we have
    $$ \Newt(f_{A}) = \conv( (d,0,0),(0,d,0),(0,0,d) )
       = \Newt(p_*^{A} (C)). $$
  So with generic coefficients $c \in K^4$, we have $\Newt(p_*^A (C)) =
  \Newt(f_A)$ for all bases $A$ of $M$. Applying Theorem \ref{mainthm}, we see
  that $\Trop(L+(f)) = C$.
\end{proof}

\begin{Example} \label{ex:real}
  Let $F_1,\ldots,F_k$ be the rank-1 flats of $M(L)$. Note that $\sum_{i=1}^k
  v_{F_i} = \bf{1} $, \ie these $1$--dimensional cones (with multiplicities
  $1$) form a balanced polyhedral fan $ D $ in $ \trop(E) $ with $ P(D) =
  \{v_{F_1},\ldots,v_{F_k}\} $. By Lemma \ref{lem:positiveReal}, this tropical
  curve is relatively realizable in $L$.
\end{Example}

Using Lemma \ref{lem:positiveReal}, we can now prove the following
proposition.

\begin{Proposition} \label{prop-realize-cycle}
  The map $\Trop: Z_1(E) \to Z_1^{\trop}(\trop(E))$ is surjective.
\end{Proposition}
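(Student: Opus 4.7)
The plan is to exhibit every tropical $1$-cycle $C\in Z_1^{\trop}(\trop(E))$ as a $\ZZ$-linear combination of tropical curves already known to be realizable---namely the effective curves produced by Lemma~\ref{lem:positiveReal} together with the fundamental rank-$1$ flat cycle $D$ of Example~\ref{ex:real}---and then to invoke the linearity of $\Trop$ from Definition~\ref{def-tropcycles}~\ref{def-tropcycles-c}. If such a decomposition $C=\sum_i m_iC_{u_i}+mD$ exists with $m_i,m\in\ZZ$, then lifting each summand to an effective cycle in $Z_1^+(E)\subset Z_1(E)$ and combining with the same coefficients yields an element of $Z_1(E)$ whose tropicalization is $C$.

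The crucial preparatory fact is that the rank-$1$ flats $F_1,\dots,F_k$ of $M(L)$ partition the ground set $N$ (since $M(L)$ is loop-free), and every vector $u\in\trop(E)$ has coordinates constant on each $F_j$. Indeed, any such $u$ lies in some cone $\sigma_{(G_1,G_2)}$ and so has the form $u=\alpha v_{G_1}+\beta v_{G_2}$; each rank-$1$ flat $F\neq G_1$ either lies in $G_2\setminus G_1$ or is disjoint from $G_2$, because $F\cap G_2\neq\emptyset$ forces $F=\cl(F\cap G_2)\subset G_2$ and $F\neq G_1$ forces $F\cap G_1=\emptyset$, so the coordinates of $u$ on any rank-$1$ flat equal $\alpha+\beta$, $\beta$ or $0$. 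Consequently, for any primitive $u\in\ZZ^N$ with $[u]\in\trop(E)$ and minimal coordinate $0$, choosing $d:=\max_k u_k$ and $\lambda_{F_j}:=d-u_{k_j}$ for an arbitrary $k_j\in F_j$ yields well-defined non-negative integers with $u+\sum_j\lambda_{F_j}v_{F_j}=d\cdot\one$. Dropping indices with $\lambda_{F_j}=0$ and applying Lemma~\ref{lem:positiveReal} produces an effective realizable tropical curve $C_u$ with $\P$-set $\{u\}\cup\{\lambda_{F_j}v_{F_j}:\lambda_{F_j}>0\}$.

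With these building blocks in place, the decomposition of an arbitrary $C\in Z_1^{\trop}(\trop(E))$ proceeds as follows. Enumerate the primitive ray directions $u_1,\dots,u_s$ occurring in $C$ that are \emph{not} proportional to any $v_{F_j}$, with integer multiplicities $m_1,\dots,m_s$, and set $C':=C-\sum_{i=1}^sm_iC_{u_i}$. By construction the only rays left in $C'$ point in rank-$1$ flat directions, say with integer multiplicities $\mu_1,\dots,\mu_k$ on $v_{F_1},\dots,v_{F_k}$. Balancing of $C'$ forces $\sum_j\mu_jv_{F_j}\in\gen\one$, and a coordinate-wise evaluation---using that the $F_j$ partition $N$---shows $\mu_1=\cdots=\mu_k=:m$. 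Hence $C'=m\cdot D$, and so $C=\sum_i m_iC_{u_i}+mD$ is the desired decomposition.

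The only delicate ingredient beyond bookkeeping is the structural observation about constancy of coordinates on rank-$1$ flats; this is what makes the $\lambda_{F_j}$ well-defined in the first step and what forces $C'$ to be an integer multiple of $D$ in the second. Everything else is linearity of $\Trop$ together with the already-established realizability results.
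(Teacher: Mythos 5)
Your proof is correct and follows essentially the same strategy as the paper's: subtract off realizable curves $C_{u}$ (the paper's $D_v$) produced by Lemma~\ref{lem:positiveReal}, observe that the remainder is supported only on rank-$1$ flat rays and hence is an integer multiple of the curve $D$ from Example~\ref{ex:real}, then invoke linearity of $\Trop$. The only cosmetic differences are that you work directly with primitive ray directions and separate integer multiplicities (so you handle arbitrary cycles in one pass, avoiding the paper's final remark that every cycle is a $\ZZ$-combination of curves), and that you spell out explicitly, via the partition of $N$ into rank-$1$ flats, why the coordinates of any $[u]\in\trop(E)$ are constant on each flat --- a fact the paper uses implicitly through the identity $v_{G_2}=\sum_{G\subset G_2}v_G$.
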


\begin{proof}
  Let $ C \in Z_1^{\trop}(\trop(E)) $ be a tropical curve and $v \in \P(C)$
  not a positive multiple of one of the vectors $ v_{F_1},\dots,v_{F_k} $
  corresponding to the rank-$1$ flats $ F_1,\dots,F_k $ of $M(L)$. As in the
  proof of Lemma \ref{lem:positiveReal} we know that $[v] \in
  \cone\{[v_{G_1}],[v_{G_2}]\}$, where $ (G_1,G_2) $ is a chain of flats in
  $M(L)$. Due to the definition of $v_{G_2}$ we know that $v_{G_2} = \sum_{G
  \subset G_2} v_G$, with the sum taken over all rank-$1$ flats $G$ contained
  in $ G_2 $. So we can write $ [v] = \sum_{i=1}^k a_i [v_{F_i}] $ for some $
  a_1,\dots,a_k \in \NN $. With $ d = \max \{ a_1,\dots,a_k \} + 1 $ there is
  then a (balanced) tropical curve $ D_v $ of degree $d$ with $ \P(D_v) = \{ v,
  (d-a_1) \, v_{F_1},\dots,(d-a_k) \, v_{F_k} \} $.

  By construction, $ C-\sum_v D_v $ is now a balanced cycle in $ \trop(E) $
  with rays $ [v_{F_1}],\dots,[v_{F_k}] $, where the sum is taken over all $v$
  as above. As $ v_{F_1},\dots,v_{F_k} $ are linearly independent, this is only
  possible if $ C-\sum_v D_v = \lambda \, D $ is a multiple of the tropical
  curve $D$ of Example \ref{ex:real} with $ \P(D) = \{v_{F_1},\dots,v_{F_k}\}
  $. But $ D_v $ and $D$ are realizable by Lemma \ref{lem:positiveReal} and
  Example \ref{ex:real}. Thus we have $ C = \lambda \, D + \sum_v D_v \in
  \Trop(Z_1(E))$. The map $\Trop: Z_1(E) \to Z_1^{\trop}(\trop(E))$ is linear
  and moreover, any cycle in $Z_1^{\trop}(\trop(E))$ is $\mathbb Z$--linear
  combination of tropical curves. This shows the surjectivity of $\Trop$.
\end{proof}

\subsection{Obstructions to Realizability in
  \texorpdfstring{$L = (x_0+x_1+x_2+x_3)$}{L=(x0+x1+x2+x3)}}

In this section, we will use Algorithm \ref{algorithm} to give general
obstructions to realizability in $L = (x_0+x_1 +x_2 +x_3) \subset K[x_0^{\pm
1},x_1^{\pm 1},x_2^{\pm 1},x_3^{\pm 1}]$, where $K$ is any algebraically closed
field. More precisely, we will work out conditions on a tropical curve $C$
implying that the Newton polytopes of the tropical push-forwards $p^A_*C$ of
$C$ cannot be the Newton polytopes of the algebraic projections $f_A$ of any
homogeneous polynomial $f$ whose degree is the degree of the tropical curve
$C$. In Theorem \ref{mainthm}, we have seen that in this case, the tropical
curve $C$ is not realizable in $L$. Some of these criteria will depend on the
characteristic of $K$, others are independent of the characteristic. Our main
results are the Propositions \ref{prop:intprod}, \ref{prop:commonray},
\ref{prop:oppositefaces}, \ref{prop:oneside}, and \ref{prop:bogartkatz}.

To use dependencies between the Newton polytopes of the push-forwards
$p_*^A(C)$ of a tropical curve $C$ in $L_2^3$, we will need the coordinates of
the vertices of these Newton polytopes. Therefore, we will use an explicit
version of the definition of the Newton polytope of a plane tropical curve
summarized in the following lemma.

\begin{remark}[Orientations of $ \RR^3/\gen\one $] \label{rem-orientation}
  For our computation of Newton polytopes it is convenient to choose an
  orientation of the plane $ \RR^3/\gen\one $ by calling a basis $
  ([v_1],[v_2]) $ positive if $ \det (v_1,v_2,\one) > 0 $. For the set $ \P(C)
  = \{ v_1,\dots,v_r \} $ of a tropical curve $C$ in $ \RR^3/\gen\one $ we will
  assume that its vectors $ v_1,\dots,v_r \in \RR^3 $ are listed
  in \emph{positive order}, \ie that their classes $ [v_1],\dots,[v_r] $ are in
  positive order with respect to the above orientation, ending with a positive
  multiple of $ e_0 $ (if present). This means that these vectors $ v_i =
  (v_{i,0},v_{i,1},v_{i,2}) $ are sorted like the columns in the following
  table:
    \[ \stackrel {\text{ascending $i$}}{\longrightarrow} \qquad
       \begin{array}{|c|cccccc|} \hline
         v_{i,0} & a & 0 & 0 & 0 & b & a \\
         v_{i,1} & b & a & a & 0 & 0 & 0 \\
         v_{i,2} & 0 & 0 & b & a & a & 0 \\ \hline
       \end{array} \]
  where $ a,b \in \NN_{>0} $ are arbitrary numbers (depending on $i$), and
  vectors that belong to the same column are sorted according to ascending
  values of $ \frac ba $.
\end{remark}

\begin{Lemma}[Vertices of Newton polytopes] \label{lemma:newton}
  Let $C$ be a plane tropical curve of degree $d$ with $\P(C) = \{v_1,\dots,
  v_r\}$, where the vectors $ v_i = (v_{i,0},v_{i,1},v_{i,2}) $ for $ i=1,
  \dots,k $ are sorted in positive order as in Remark \ref{rem-orientation}.
  Then the vertices of $ \Newt(C) \subset \Delta_d $ are exactly the points
    \[ Q_k = (0, m, d-m) + \sum_{i=1}^k u_i
       \quad\text{with}\quad
       m = \sum_{i:\,v_{i,1} \neq 0} v_{i,0}
       \;\;\text{and}\;\;
       u_i = (v_{i,1}-v_{i,2},v_{i,2}-v_{i,0},v_{i,0}-v_{i,1}) \]
  for $ k=0,\dots,r-1 $.
\end{Lemma}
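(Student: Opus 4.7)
The plan is to identify $\Newt(C)$ as a lattice polygon in $\Delta_d \subset H_d$ with exactly $r$ vertices arranged cyclically, so that traversing the polygon in the positive orientation from $Q_{k-1}$ to $Q_k$ corresponds to crossing the $k$-th ray $[v_k]$ of $C$. Once the starting vertex $Q_0$ and the edge vectors $u_k$ are in hand, the formula $Q_k = Q_0 + \sum_{i=1}^k u_i$ is immediate by summation. My first task is therefore to pin down the edge vectors, and only afterwards the starting vertex.

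For the edge vectors, Construction \ref{constr-normfan} tells us that the edge of $\Newt(C)$ whose inner normal is $[v_k]$ has lattice length $\gcd(v_{k,0},v_{k,1},v_{k,2})$. I would verify that this edge vector is exactly $u_k$ through four short checks. First, $u_k \in H_0$, since its coordinates sum to $0$. Second, $v_k \cdot u_k = 0$, via direct expansion into three cancelling pairs, so $u_k$ is a direction vector of the edge. Third, after writing $v_k = \gcd(v_k)\,(a,b,c)$ with $(a,b,c)$ primitive and with one coordinate equal to $0$ (as enforced by Notation \ref{notation-pofc}), the triple $(b-c,c-a,a-b)$ is again coprime --- a brief case analysis on which of $a,b,c$ vanishes, which would fail if no coordinate were zero --- so $u_k$ has lattice length $\gcd(v_k)$ in $\ZZ^3 \cap H_0$, matching the required multiplicity. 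Fourth, $u_k$ points in the correct direction: $u_k$ is the $90^{\circ}$ clockwise rotation of $v_k$ in the 2-plane $H$, which is exactly the rotation that carries an inner normal to the edge vector in counterclockwise traversal. This last point can be verified once and for all on a single case such as $v_k=e_1$, for which $u_k=(1,0,-1)$ runs from $(d,0,0)$ to $(0,0,d)$ along the side $\{v_1=0\}$ of $\Delta_d$. The loop-closing identity $\sum_i u_i = 0$ follows directly from Lemma \ref{lem-degree}.

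For the starting vertex $Q_0$, the convention in Remark \ref{rem-orientation} places (if present) a positive multiple of $e_0$ last in the ordering. Hence $Q_0$ is the vertex of $\Newt(C)$ corresponding to the open sector of $C$ containing $[e_0]$, or --- if $[e_0]$ is itself a ray of $C$ --- the endpoint of the associated edge just after crossing $[e_0]$ positively. In either situation $[e_0]$ is an inner normal at $Q_0$, so $v_0$ is minimized over $\Newt(C)$ at $Q_0$; since $\Newt(C)$ meets $\{v_0=0\}$ by Definition \ref{def-newton} \ref{def-newton-c}, this minimum is $0$ and $Q_{0,0}=0$. To compute $Q_{0,1}$, I would walk positively from $Q_0$ to the CCW-last vertex $Q_j$ on $\{v_1=0\}$, which exists by the analogous argument for $[e_1]$. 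The rays $v_1,\dots,v_j$ traversed are exactly the column-$1$ rays of the table in Remark \ref{rem-orientation} (all with $v_{i,2}=0$ and $v_{i,0}>0$), possibly followed by a positive multiple of $e_1$. Therefore
\[
  -Q_{0,1} \;=\; Q_{j,1} - Q_{0,1} \;=\; \sum_{i=1}^j u_{i,1} \;=\; \sum_{i=1}^j (v_{i,2}-v_{i,0}) \;=\; -\sum_{i=1}^j v_{i,0}.
\]
The only indices $i$ with $v_{i,1}\ne 0$ that are not already in $\{1,\dots,j\}$ are those in the third column of the table, which all have $v_{i,0}=0$. Hence $\sum_{i=1}^j v_{i,0} = \sum_{i:\,v_{i,1}\ne 0} v_{i,0} = m$, giving $Q_{0,1}=m$ and $Q_{0,2}=d-m$ from $Q_0 \in H_d$.

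The main obstacle is the careful bookkeeping required when $[e_0]$ or $[e_1]$ happens to be a ray of $C$ rather than lying in the interior of a sector: this controls whether the relevant vertex on a side of $\Delta_d$ is unique or one of two endpoints of a coordinate edge of $\Newt(C)$. Defining $Q_0$ (and its auxiliary $Q_j$) uniformly as the CCW-last vertex of $\Newt(C)$ on the relevant side of $\Delta_d$ lets both situations be treated by the same computation, and the formula then drops out.
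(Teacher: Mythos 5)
Your proof is correct and follows essentially the same approach as the paper: verify that the $u_k$ are the cyclically ordered edge vectors of $\Newt(C)$ (by checking they lie in $H_0$, are orthogonal to $v_k$, have lattice length $\gcd(v_k)$ owing to the zero-coordinate normalization, and carry the right orientation), then pin down the translation via the telescoping sum together with the column structure of Remark \ref{rem-orientation}. One small slip worth fixing: for $v_k=e_1$ the vector $u_k=(1,0,-1)$ runs from $(0,0,d)$ towards $(d,0,0)$, not the reverse as you wrote, though this does not affect your orientation check or the final formula.
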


\begin{proof}
  As the vectors $ [v_1],\dots,[v_r] \in \RR^3/\gen\one $ sum up to zero and
  are sorted in positive order, the points $ \sum_{i=1}^k [v_i] $ for $
  k=0,\dots,r-1 $ are the vertices of a convex polytope. But $ Q_0,\dots,
  Q_{r-1} $ are by construction just the images of these points under an affine
  isomorphism $ \RR^3/\gen\one \to H_d := \{(w_0,w_1,w_2): w_0+w_1+w_2=d \} $.
  Thus they also form the vertices of a convex polytope $ \Delta \subset H_d $.

  To check that $C$ is the weighted inner normal fan of $ \Delta $ note first
  of all that one of the coordinates of each $ v_k $ is zero by definition of $
  \P(C) $. Hence, for all $k$ we have $ \gcd(v_{k,0}, v_{k,1}, v_{k,2}) =
  \gcd(u_{k,0}, u_{k,1}, u_{k,2}) $, and thus $ [v_k] $ and $ u_k = Q_k -
  Q_{k-1} $ have the same integer length. Moreover, it is obvious that $ v_k
  \cdot u_k = 0 $. This means that the function $ \varphi_k: \Delta \to \RR, \;
  u \mapsto v_k \cdot u $ is constant, and thus extremal, on the side $
  \overline{Q_{k-1}Q_k} $ of $ \Delta $. It remains to be shown that it is in
  fact minimal there. This is obvious if $ \Delta $ is $1$-dimensional, so let
  us assume that $ \Delta $ is $2$-dimensional. In this case balancing requires
  the oriented angle between $ v_k $ and $ v_{k+1} $ (where we set $ v_{r+1} :=
  v_1 $) to be in the open interval $ (0,\pi) $, which means that $ ([v_k],
  [v_{k+1}]) $ is a positive basis of $ \RR^3/\gen\one $. But then we have
    \[ \varphi_k(Q_{k+1}) - \varphi_k(Q_k) = v_k \cdot u_{k+1}
         = \det (v_k,v_{k+1},\one) > 0, \]
  which implies minimality as claimed. So, up to translations, $ \Delta $ is
  the Newton polytope of $C$.

  Finally, to see that the translation is correct, it suffices by Definition
  \ref{def-newton} to check that $ \Delta $ meets the three lines in $ H_d $
  where one of the coordinates is $0$. Obviously, the $0$-th coordinate is $0$
  for $ Q_0 $. If $k$ corresponds to the last vector in the second column in
  the table of Remark \ref{rem-orientation}, the first coordinate of $ Q_k $ is
    \[ m + \sum_{i=1}^k (v_{i,2}-v_{i,0})
       = \sum_{i:\,v_{i,1} \neq 0} v_{i,0} - \sum_{i:\,v_{i,1} \neq 0} v_{i,0}
       = 0, \]
  and if $k$ corresponds to the last vector in the fourth column in this table,
  the second coordinate of $ Q_k $ is
    \[ d-m + \sum_{i=1}^k (v_{i,0}-v_{i,1})
       = d - \sum_{i:\,v_{i,1} \neq 0} v_{i,0}
           + \sum_{i:\,v_{i,1} \neq 0} v_{i,0}
           - \sum_{i=1}^r v_{i,1}
       = d-m+m-d = 0 \]
  by Lemma \ref{lem-degree}.
\end{proof}

\begin{Notations}[Notations for Newton polytopes] \label{notations:newton}
  \leavevmode \vspace{-0.5ex}
  \begin{enumerate}
  \item
    To simplify the notations, we denote the projections $ p^A: \RR^4/\gen
    \one \to \RR^3/\gen\one $ for a basis $A$ of $M(L)$ (see Construction
    \ref{constr-project}) by $ p^k $, where $ k \in \{0,1,2,3\} $ is the
    unique element not contained in $A$. Correspondingly, the polynomial $ f_A
    $ of Construction \ref{constr-algproj} is denoted by $f_k$.
  \item
    To work with the Newton polytopes of the push-forwards $p_*^k (C)$, we will
    identify the plane $ H_d = \{ w \in \mathbb R^3: w_0+w_1+w_2 = d \} $ with
    $\mathbb R^2$ by choosing the isomorphism $ H_d \to \mathbb R^2,\;w \mapsto
    (w_0,w_1)$. In other words, from now on the Newton polytopes $\Newt(p_*^k
    (C))$ and $\Newt(f_k)$ will be considered to be in $ \conv
    ((0,0),(0,d),(d,0)) \subset \mathbb R^2 $ by dropping the last coordinate
    which is not the $k$-th.
  \item \label{notations:newton-c}
    We will denote the coefficients of $f_0$ and $ f_3 $ by $a_{ij}$ and
    $b_{ij}$, respectively, \ie
      \[ \qquad\quad
         f_0(x_1,x_2,x_3)
         = \sum_{i=0}^d \sum_{j=0}^{d-i} a_{ij} x_1^i x_2^j x_3^{d-i-j}
         \quad\text{and}\quad
         f_3(x_0,x_1,x_2)
         = \sum_{i=0}^d \sum_{j=0}^{d-i} b_{ij} x_0^i x_1^j x_2^{d-i-j}. \]
    This is illustrated in the following picture:
    \begin{center} \begin{tikzpicture}
      \draw (0,0) -- (0,3) -- (3,0) -- (0,0);
      \filldraw (0,0) circle (2pt)
          (1,0) circle (2pt)
          (2,0) circle (2pt)
          (3,0) circle (2pt)
          (0,1) circle (2pt)
          (1,1) circle (2pt)
          (1,2) circle (2pt)
          (0,2) circle (2pt)
          (2,1) circle (2pt)
          (0,3) circle (2pt);
      \draw (0,0) node[anchor = north east] {$ \scriptstyle a_{0,0}$}
          (0,0) node[anchor = south west] {$ x_3^d $}
          (1,0) node[anchor = north] {$ \scriptstyle a_{1,0}$}
          (2,-0.1) node[anchor = north] {$\ldots$}
          (3,0) node[anchor = north] {$ \scriptstyle a_{d,0}$}
          (3,0) node[anchor = south west] {$ x_1^d $}
          (0,1) node[anchor = east] {$ \scriptstyle a_{0,1}$}
          (-0.2,2) node[anchor = east] {$\vdots$}
          (0,3) node[anchor = east] {$ \scriptstyle a_{0,d}$}
          (0,3) node[anchor = south west] {$ x_2^d $}
          (2.5,2.5) node {$\Newt(f_0)$};
      \draw (6,0) -- (6,3) -- (9,0) -- (6,0);
      \filldraw (6,0) circle (2pt)
          (7,0) circle (2pt)
          (8,0) circle (2pt)
          (9,0) circle (2pt)
          (6,1) circle (2pt)
          (7,1) circle (2pt)
          (7,2) circle (2pt)
          (6,2) circle (2pt)
          (8,1) circle (2pt)
          (6,3) circle (2pt);
      \draw (6,0) node[anchor = north east] {$ \scriptstyle b_{0,0}$}
          (6,0) node[anchor = south west] {$ x_2^d $}
          (7,0) node[anchor = north] {$ \scriptstyle b_{1,0}$}
          (8,-0.1) node[anchor = north] {$\ldots$}
          (9,0) node[anchor = north] {$ \scriptstyle b_{d,0}$}
          (9,0) node[anchor = south west] {$ x_0^d $}
          (6,1) node[anchor = east] {$ \scriptstyle b_{0,1}$}
          (5.8,2) node[anchor = east] {$\vdots$}
          (6,3) node[anchor = east] {$ \scriptstyle b_{0,d}$}
          (6,3) node[anchor = south west] {$ x_1^d $}
          (8.5,2.5) node {$\Newt(f_3)$};
    \end{tikzpicture} \end{center}
  \end{enumerate}
\end{Notations}

To start with, we want to prove an obstruction to realizability in $L$
equivalent to an obstruction given by Brugall\'e and Shaw in
\cite{brugalleshaw}. They proved that if a tropical curve $D$ in $L_2^3$ is
realizable in $L$ by an irreducible curve and $C$ is a tropical curve in
$L_2^3$ such that the tropical intersection product $C \cdot D$ in $L_2^3$ is
negative, then $C$ cannot be realizable by an irreducible curve. In the
following, we will give a result which is equivalent to this obstruction in the
case where $D$ is one of the classical lines $D_1 = \Span\{[1,1,0,0]\}$, $D_2 =
\Span\{[1,0,1,0]\}$, or $D_3 = \Span\{ [1,0,0,1] \}$. However, Brugall\'e and
Shaw always ask for relative realizability by irreducible cycles, while in this
paper we allow any positive cycle to realize a given tropical curve. That is
why the statements seem to be different at first glance, although in fact the
obstruction we obtain is the same.

\begin{Remark}[Correspondence between relative realizability by irreducible and
    positive cycles]
  In Proposition \ref{prop-irreal} we have seen that relative realizability by
  positive cycles may be used to decide relative realizability by irreducible
  cycles. However, it is also possible to decide relative realizability by
  positive cycles using the irreducible version of relative realizability: Let
  $C$ be any tropical curve in $L_2^3$. To decide whether or not $C$ is
  relatively realizable by a positive cycle in $Z_1(E)$, consider all positive
  tropical decompositions $C = \sum_{i=1}^r C_i \in Z_1^{\trop}(\trop(E))$. If
  there is a decomposition $C = \sum_{i=1}^r C_i $ such that each tropical
  curve $C_i$ is relatively realizable by an irreducible cycle, then $C$ is
  relatively realizable by a positive cycle. If such a decomposition does not
  exist, there is no positive cycle in $Z_1(E)$ tropicalizing to $C$.
\end{Remark}

Our algorithm to decide relative realizability by positive cycles is based on
projections, and a priori it is not clear how the intersection product of two
tropical curves in $L_2^3$ can be seen in these projections. Therefore, we
want to find an interpretation of the intersection product $C \cdot D$ in terms
of the Newton polytopes of the push-forwards $p_*^k(C)$ of $C$ in case $D$ is a
classical line. Since the three classical lines are equivalent modulo
permutations of the coordinates, we will only state and prove this
interpretation for the classical line $D_1$.

\begin{sidepic}{intprod}
  \begin{Lemma}[Intersection products with classical lines]
      \label{lemma:interpretation}
    Let $C$ be a tropical curve of degree $d$ in $L_2^3$. Moreover, let $m_3$
    be the maximum of all $m \in \mathbb N$ such that $(i,j) \notin \Newt(p_*^3
    (C))$ for all $(i,j) \in \mathbb N^2$ with $i+j < m$. Similarly, let $m_0$
    be the maximum of all $m \in \mathbb N$ such that $(i,j) \notin \Newt(p_*^0
    (C))$ for all $(i,j) \in \mathbb N^2$ with $i > d - m$. Then the tropical
    intersection product of $C$ and $D_1 = \Span\{[1,1,0,0]\}$ can be written
    as
      \[ C \cdot D_1 = d - m_0 - m_3. \]
  \end{Lemma}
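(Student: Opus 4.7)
The plan is to compute both $C \cdot D_1$ and $d - m_0 - m_3$ directly from $P(C)$ and show that they agree. Applying the explicit intersection formula of Construction~\ref{constr-l32} with $P(D_1) = \{(1,1,0,0),(0,0,1,1)\}$ and $\deg D_1 = 1$, the only surviving terms of the double sum are those with supports $\{0,1\}$ and $\{2,3\}$, so
\[
  C \cdot D_1 \;=\; d - S_{01} - S_{23},
  \quad\text{where}\quad
  S_{01} := \!\sum_{\substack{(a,b,0,0) \in P(C)\\a,b>0}}\!\min(a,b),
\]
and $S_{23}$ is defined analogously. It therefore suffices to show that $m_3 = S_{01}$ and $m_0 = S_{23}$.

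For $m_3 = S_{01}$, I would first unwind the definition: $m_3 = d - \max\{w_2 : w \in \Newt(p_*^3 C)\}$ measures the distance, in the $w_2$-direction, from $\Newt(p_*^3 C)$ to the corner $(0,0,d)$ of $\Delta_d$. Using Lemma~\ref{lemma:newton} together with the positive ordering of Remark~\ref{rem-orientation}, I would track the third coordinate $Q_{k,2}$ vertex by vertex via the increments $v_{i,0} - v_{i,1}$ of $u_i$: these are negative on columns $2$ and $3$, zero on column $4$, positive on columns $5$ and $6$, and within column $1$ they change sign from positive (when $a > b$) to negative (when $a < b$) in the order prescribed by the ascending sort on $b/a$. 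Thus the maximum of $Q_{k,2}$ is attained at the last column-$1$ ray with $a > b$, and an elementary rearrangement identifies
\[
  m_3 \;=\; \sum_{(a,b,0)\in P(p_*^3 C),\;a,b>0} \min(a,b).
\]

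The next step is to translate this sum back to $P(C)$. By Lemma~\ref{lem-pofc} the rays $[a,b,0]$ with $a,b>0$ of $p_*^3 C$ are produced exactly by those $v \in P(C)$ with $v_2 < v_0,\,v_1$, and each such $v$ contributes $\min(v_0,v_1) - v_2$ to the corresponding $\min(a,b)$. Here I would invoke the crucial feature of $L^3_2$: every ray has at most two nonzero coordinates. The condition $v_0,v_1 > v_2 \geq 0$ forces $v_0,v_1$ to be those two positive entries, so $v_2 = v_3 = 0$ and the per-$v$ contribution simplifies to $\min(v_0,v_1)$. Summing over all such $v$ gives $m_3 = S_{01}$.

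An entirely parallel computation for $\Newt(p_*^0 C)$ in the coordinates $\{1,2,3\}$—where $m_0 = d - \max w_1$ is the distance to the corner corresponding to the ray $[e_1]$ of $p_*^0 C$, and the relevant "corner-cutting" sector now consists of rays $[0,a,b]$ with $a,b>0$ (column $3$ of Remark~\ref{rem-orientation} for these coordinates)—expresses $m_0$ as a sum of $\min(a,b)$, and the same "at most two nonzero coordinates" argument forces the contributing $v \in P(C)$ to be of the form $(0,0,a,b)$; hence $m_0 = S_{23}$. Combining, $d - m_0 - m_3 = d - S_{23} - S_{01} = C \cdot D_1$. The main obstacle is the column-by-column bookkeeping needed to pin down $\max_k Q_{k,2}$ via Lemma~\ref{lemma:newton}; once this is settled for $m_3$, the case of $m_0$ follows by an entirely symmetric argument.
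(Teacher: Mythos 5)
Your proof is correct and follows essentially the same route as the paper: both start from the explicit intersection formula of Construction~\ref{constr-l32} and reduce the statement to the identities $m_3 = \sum_{(a,b,0,0) \in \P(C)} \min(a,b)$ and $m_0 = \sum_{(0,0,a,b) \in \P(C)} \min(a,b)$, which are then verified by locating an extremizing vertex via Lemma~\ref{lemma:newton} and the positive ordering of Remark~\ref{rem-orientation}. The paper pins down this vertex by the index $k$ at which the sorted ratio $b_k/a_k$ crosses $1$, which is exactly the ``turnaround'' point your increment-tracking argument identifies, so the two are the same computation in slightly different bookkeeping.
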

\end{sidepic}

\begin{proof}
  In Construction \ref{constr-l32} we have seen that the tropical intersection
  product of $C$ and $D_1$ is given by
    \[ C \cdot D_1 = d - \sum\limits_{(a,b,0,0) \in \P(C)} \min \{a,b\}
                       - \sum\limits_{(0,0,a,b) \in \P(C)} \min \{a,b\}. \]
  To prove the claim, we will show that
    $$ \sum\limits_{(a,b,0,0) \in \P(C)} \min \{a,b\} = m_3. $$ 
  Analogously, one can then show that
    $$ \sum\limits_{(0,0,a,b) \in \P(C)} \min \{a,b\} = m_0. $$
  Let $(a_1,b_1,0,0),\ldots,(a_r,b_r,0,0)$ be all elements in $\P(C)$ whose
  first and second coordinate are both non-zero, sorted in such a way that 
    \[ \frac{b_k}{a_k} < \frac{b_{k+1}}{a_{k+1}}
       \quad\text{for all}\quad k = 1,\ldots,r-1. \]
  By Remark \ref{rem-orientation} this corresponds to the first vectors of a
  positive ordering of the rays of $p_*^3 (C)$. Hence, by Lemma
  \ref{lemma:newton} the point $ Q_k = \left(\sum_{l=1}^k b_l, \sum_{l=k+1}^r
  a_l\right) $ is a vertex of $\Newt(p_*^3(C))$ for all $k=0,\ldots,r$. We
  then have
    \[ \Newt(p_*^3(C)) \subset \conv\bigl( \{Q_k: k=0,\ldots,r \}
       \cup \{ (0,d),(d,0) \} \bigr). \]
  Since $ \sum\limits_{l=1}^k b_l+\sum\limits_{l=k+1}^r a_l \geq
  \sum\limits_{l=1}^r \min\{a_l,b_l\} $ for every $k$, we see that $ i+j \geq
  \sum\limits_{l=1}^r \min\{a_l,b_l\}$ for all $ (i,j) \in \Newt(p_*^3 (C)) $.
  On the other hand, the points are in such an ordering that there is a unique
  $0 \leq k \leq r$ with $ \frac {b_l}{a_l} \le 1$ for all $l \le k$ and $
  \frac{b_l}{a_l} > 1$ for all $l > k$. In other words, $b_l \le a_l$ for all
  $l \le k$ and $b_l > a_l$ for all $l > k$. So for this particular $k$ we get
    \[ \sum_{l=1}^k b_l + \sum_{l=k+1}^r a_l
       = \sum_{l=1}^k \min\{a_l,b_l\} + \sum_{l=k+1}^r \min\{a_l,b_l\}
       = \sum_{l=1}^r \min\{a_l,b_l\}. \]
  Hence, we have $m_3 = \sum\limits_{l=1}^r \min\{a_l,b_l\}$. 
\end{proof}

Using this interpretation of the intersection product in terms of the Newton
polytopes of the push-forwards $p_*^3 (C)$ and $p_*^0 (C)$, we are aiming to
prove the first obstruction to relative realizability, which will imply the
obstruction of Brugall\'e and Shaw in the special case when the realizable
curve is the classical line $ D_1 = \Span\{[1,1,0,0]\} $. To do so, we need the
following lemma.

\vspace{-1ex}

\begin{sidepic}{kristin} \begin{Lemma} \label{lemma:intprod}
  Let $C$ be a tropical curve of degree $d$ in $L_2^3$, and let $m_0$ and $m_3$
  be as in Lemma \ref{lemma:interpretation}. Moreover, set 
  \begin{align*}
    n_0 &= | \{ (d-m_0,j) \in \mathbb N^2:
      j \leq m_0, (m_0,j) \notin \Newt(p_*^0 (C)) \}| \quad \text{and} \\
    n_3 &= | \{ (i,j) \in \mathbb N^2:
      i + j = m_3, (i,j) \notin \Newt(p_*^3 (C))\}|.
  \end{align*}
  If $n_0 > d-m_3$ or $n_3 > d-m_0$, then $C$ is not relatively realizable in
  $L$.
\end{Lemma}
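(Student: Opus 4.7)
The plan is to derive a contradiction via Bézout's theorem applied to a specific line through the two ``distinguished'' points of $E$ associated with the two Newton polytopes $\Newt(p_*^3(C))$ and $\Newt(p_*^0(C))$, combined with the Newton polytope identity coming from factoring the line out of the defining polynomial. By symmetry (exchanging the roles of the indices $0$ and $3$), it suffices to treat the case $n_3 > d - m_0$.

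First I set up the geometry. The points $p_3 = [0{:}0{:}1{:}{-}1]$ and $p_0 = [{-}1{:}1{:}0{:}0]$ lie in $E$ and are precisely the points at which two of the four coordinates vanish in the relevant pairs. The line $\ell$ through $p_0$ and $p_3$ in $E$ is parametrized by $[{-}s{:}s{:}t{:}{-}t]$, projecting under $p^3$ to $\{x_0+x_1=0\}\subset \PP^2_{012}$ and under $p^0$ to $\{x_2+x_3=0\}\subset \PP^2_{123}$. Suppose now that $C$ is realized by an effective cycle $Y$ corresponding to $[f]\in (R/L)_d$. Dehomogenizing $f_3$ at $p_3$ and $f_0$ at $p_0$, the vanishing conditions $b_{ij}=0$ for $i+j<m_3$ and $a_{ij}=0$ for $i>d-m_0$ forced by $\Newt(f_3)=\Newt(p_*^3(C))$ and $\Newt(f_0)=\Newt(p_*^0(C))$ translate into $\mult_{p_3}(Y)\geq m_3$ and $\mult_{p_0}(Y)\geq m_0$.

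The hypothesis combined with $n_3 \leq m_3$ yields $m_0+m_3>d$, so that by Bézout on $E\cong \PP^2$ the line $\ell$ must appear as a component of $Y$. Writing $Y = k\ell + Y'$ with $k \geq 1$ maximal (so $\ell \not\subset Y'$), Bézout applied to $Y'$ and $\ell$ gives
$$ d - k = \deg(Y'\cdot \ell) \geq \mult_{p_0}(Y') + \mult_{p_3}(Y') \geq (m_0-k)+(m_3-k), $$
whence $k \geq m_0+m_3-d$. Since $\ell$ is cut out in $\PP^2_{012}$ by $x_0+x_1$, we have $f_3 = (x_0+x_1)^k\, g$ with $(x_0+x_1)\nmid g$, which on Newton polytopes yields the Minkowski sum identity $\Newt(f_3) = \Newt((x_0+x_1)^k) + \Newt(g)$.

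The crux of the argument is now a length comparison. The bottom face of $\Newt(f_3)=\Newt(p_*^3(C))$ on the line $i+j=m_3$ is a segment in direction $(1,-1)$ containing exactly $m_3+1-n_3$ lattice points, hence of length $m_3-n_3$, while $\Newt((x_0+x_1)^k)$ is itself a segment in the same direction $(1,-1)$ of length $k$. Since Minkowski sums of collinear segments simply add their lengths, the bottom face of $\Newt(g)$ has length $m_3-n_3-k \geq 0$, i.e.\ $k \leq m_3-n_3$. Combining with the inequality from Bézout and rewriting the hypothesis $n_3>d-m_0$ as $m_3-n_3 < m_0+m_3-d$, we obtain
$$ m_0+m_3-d \;\leq\; k \;\leq\; m_3-n_3 \;<\; m_0+m_3-d, $$
a contradiction. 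The symmetric case $n_0>d-m_3$ is handled identically, using $f_0=(x_2+x_3)^k g_0$ and measuring the length of the face of $\Newt(f_0)$ on the column $i=d-m_0$, which is a segment in direction $(0,1)$ of length $m_0-n_0$, parallel to $\Newt((x_2+x_3)^k)$. The main delicate point is to keep track of which facial direction of the Newton polytope is relevant in each case and to check that the Minkowski sum of two parallel segments really does contribute additively to the length of the bottom/right face of $\Newt(f_3)$ resp.\ $\Newt(f_0)$.
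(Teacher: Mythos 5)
Your proof is correct, and it takes a genuinely different route from the paper's. The paper works with a univariate auxiliary polynomial $g(x_3)$ extracted as (essentially) the coefficient of $x_1^{d-m_0}$ in $f_0(x_1,1,x_3)$, written there as the normalized partial derivative $\partial^{d-m_0}f_0/\partial x_1^{d-m_0}(x_1,1,x_3)$; the Newton-polytope conditions are used to show that $g$ has degree $t$, a zero of order $s$ at $0$ with $t-s=m_0-n_0$, and a zero of order at least $m_0+m_3-d$ at $-1$, which over-counts. You instead read the bounds $m_0$ and $m_3$ as multiplicities of the realizing divisor $Y$ at the two torus points $p_0=[-1{:}1{:}0{:}0]$ and $p_3=[0{:}0{:}1{:}{-}1]$, use B\'ezout to force the line $\ell$ through them to occur in $Y$ with multiplicity $k\geq m_0+m_3-d$, and then bound $k$ from above by $m_3-n_3$ via $\Newt(f_3)=\Newt((x_0+x_1)^k)+\Newt(g)$ and additivity of lengths of parallel edges under Minkowski sum. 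Both routes rest on the same tension -- $m_0+m_3>d$ forces extra vanishing -- but yours is more geometric and stays in the Newton-polytope/intersection-theory language, while the paper's is more elementary and purely polynomial. Two small remarks: first, your $k\geq m_0+m_3-d$ does not actually need the maximality of $k$ beyond ensuring $\ell\not\subset Y'$, and the inequalities $\mult_{p_i}(Y')\geq m_i-k$ hold (trivially if the right side is negative), so no case distinction is needed; second, your approach is manifestly characteristic-free, whereas the paper's derivative phrasing requires a tiny reinterpretation in small positive characteristic (when $\ch K\leq d-m_0$ the literal derivative may vanish, and one should take the bare coefficient of $x_1^{d-m_0}$ instead, which is what the argument actually uses).
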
 \end{sidepic}

\begin{proof}
  It suffices by symmetry to show that $C$ is not relatively realizable if $
  n_0 > d-m_3 $. So assume to the contrary that this inequality holds and $ C =
  \Trop(L+(f)) $ for a homogeneous polynomial $ f \in K[x_0,x_1,x_2,x_3] $ of
  degree $d$.

  By definition of $ m_0 $ the polynomial $ f_0 $ of Notation
  \ref{notations:newton} \ref{notations:newton-c} (whose Newton polytope is
  shown in the picture above) has $ x_1 $-degree $ d-m_0 $. Hence, the
  (inhomogeneous) polynomial
    \[ g(x_3) := \frac{\partial^{d-m_0} f_0}{\partial x_1^{d-m_0}} (x_1,1,x_3)
       \]
  is independent of $ x_1 $; its coefficients correspond to the vertical
  dotted line in the picture. By definition of $ n_0 $ there are thus $ 0 \le s
  \le t \le m_0 $ with $ t-s = m_0 - n_0 $ such that $g$ contains only
  terms of degrees from $s$ to $t$, with non-zero coefficients in degrees
  $s$ and $t$. Hence, $g$ has degree $t$ and a zero of order $s$ at $0$.

  On the other hand, the polynomial $ f_3(x_0,x_1,x_2) =
  f_0(x_1,x_2,-x_0-x_1-x_2) $ is of $ x_2 $-degree $ d-m_3 $ by definition of $
  m_3 $. Substituting $ -x_3-x_1 $ for $ x_0 $ in $ f_3 $, we thus get another
  polynomial $ f_0(x_1,x_2,x_3-x_2) $ whose $ x_2 $-degree is at most $ d-m_3
  $. At the same time this polynomial has $ x_1 $-degree at most $ d-m_0 $
  just as $ f_0(x_1,x_2,x_3) $. Consequently, it does not contain any term with
  a power of $ x_3 $ less than $ m_0+m_3-d $, \ie $ x_3^{m_0+m_3-d} $ divides $
  f_0(x_1,x_2,x_3-x_2) $. Setting $ x_2=1 $ and replacing $ x_3 $ by $ x_3+1 $
  we thus see that $ (x_3+1)^{m_0+m_3-d} $ divides $ f_0(x_1,1,x_3) $, and
  hence, also $g$. In other words, $g$ has a zero of order at least $ m_0+m_3-d
  $ at $ -1 $.

  Altogether, we have now seen that the polynomial $g$ has degree $t$ but
  zeroes of total order
    \[ s + (m_0+m_3-d) = (t+n_0-m_0)+m_0+m_3-d > (t+d-m_3-m_0)+m_0+m_3-d
       = t, \]
  which is a contradiction.
\end{proof}

\begin{Proposition} \label{prop:intprod}
  Let $C$ be a tropical curve in $ L^3_2 $, and let $D$ be the classical line
  $D = \Span\{[1,1,0,0]\}$. Let $ k_1,k_2 \in \NN_{\ge 0} $ be the
  multiplicities of the rays $ (1,1,0,0) $ and $ (0,0,1,1) $ in $C$,
  respectively, and set $ k = \min (k_1,k_2) $.

  If $ C \cdot D < -k $, then the tropical curve $C$ is not realizable in $L$.
\end{Proposition}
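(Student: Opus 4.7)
The plan is to deduce Proposition \ref{prop:intprod} from Lemma \ref{lemma:intprod} by computing the quantities $n_0$ and $n_3$ explicitly in terms of $m_0, m_3, k_1, k_2$. Together with the identity $C \cdot D = d - m_0 - m_3$ provided by Lemma \ref{lemma:interpretation}, this will convert the hypothesis $C \cdot D < -k$ directly into one of the two non-realizability conditions of that lemma.

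The key computation is $n_3 = m_3 - k_1$ (and analogously $n_0 = m_0 - k_2$). By Lemma \ref{lemma:newton} the edge of $\Newt(p_*^3 C)$ lying on the line $i+j = m_3$ is dual to the unique ray of $p_*^3 C$ pointing in direction $[1,1,0]$, and its integer length equals the multiplicity of that ray. Now every ray of $C$ lies in some two-dimensional cone of $L^3_2$, and by the description of those cones in Example \ref{ex-rank} its primitive direction has at most two non-zero coordinates; combined with the requirement that $p^3$ sends the direction to $[1,1,0]$, this forces the primitive direction to be $(1,1,0,0)$. Hence the ray of $p_*^3 C$ in direction $[1,1,0]$ has multiplicity exactly $k_1$, the dual edge of $\Newt(p_*^3 C)$ has integer length $k_1$, and since the line $i+j = m_3$ meets $\Delta_d$ in exactly $m_3+1$ lattice points of $\mathbb N^2$, of which $k_1+1$ lie on the edge, we conclude $n_3 = m_3 - k_1$. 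The entirely symmetric argument, applied to the edge of $\Newt(p_*^0 C)$ on the line $i = d-m_0$ (which is dual to the unique ray of $p_*^0 C$ in direction $[0,1,1]$, forced to come from the $(0,0,1,1)$-ray of $C$), gives $n_0 = m_0 - k_2$.

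Combining these with Lemma \ref{lemma:interpretation}, the hypothesis $C \cdot D < -k$ is equivalent to $m_0 + m_3 > d + \min(k_1,k_2)$. Consequently at least one of $m_0 + m_3 > d + k_1$ and $m_0 + m_3 > d + k_2$ must hold; rewritten in terms of $n_0$ and $n_3$, this says $n_3 > d - m_0$ or $n_0 > d - m_3$, so Lemma \ref{lemma:intprod} applies and $C$ is not realizable in $L$. The main technical point is the ray-identification step: showing that only the $(1,1,0,0)$-ray (respectively, the $(0,0,1,1)$-ray) of $C$ can project onto the ray of $p_*^3 C$ (respectively, $p_*^0 C$) responsible for the relevant edge of the Newton polytope. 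Once this combinatorial input is available, everything else is a direct counting of lattice points.
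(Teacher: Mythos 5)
Your proof is correct and follows essentially the same route as the paper's: identifying $n_0 = m_0 - k_2$ (and symmetrically $n_3 = m_3 - k_1$) via the lattice length of the relevant edge of the projected Newton polytope, then feeding the hypothesis $C\cdot D < -k$ through Lemma \ref{lemma:interpretation} to trigger Lemma \ref{lemma:intprod}. The only difference is cosmetic: the paper fixes $k = k_2$ without loss of generality and checks a single inequality, whereas you carry both $n_0$ and $n_3$ symmetrically and observe that the $\min$ forces at least one of the two conditions in Lemma \ref{lemma:intprod}; you also spell out the cone-by-cone check that only the ray $(1,1,0,0)$ (resp.\ $(0,0,1,1)$) can project onto the dual ray of the edge in question, a step the paper leaves implicit.
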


\begin{proof}
  Assume without loss of generality that $ k = k_2 $. Note that $ k_2 $ is
  just the length $ m_0-n_0 $ of the right vertical edge of the Newton polytope
  $ \Newt(p^*_0 C) $ as in Lemma \ref{lemma:intprod}. Hence,
    \[ n_0 = m_0 - k_2 = m_0 - k > m_0 + C \cdot D
      = d - m_3, \]
  with the last equation following from Lemma \ref{lemma:interpretation}. So
  $C$ is not realizable in $L$ by Lemma \ref{lemma:intprod}.
\end{proof}

\begin{Example}
  Let $C$ in $L_2^3$ be the tropical curve with
    $$ \P(C) = \{(2,2,0,0),(0,0,2,1),(0,0,0,1)\}. $$
  Then we have $C \cdot D_1 = -1$ by Construction \ref{constr-l32}, so applying
  Proposition \ref{prop:intprod} we see that the tropical curve $C$ is not
  realizable in $L$.
\end{Example}

\begin{Example} \label{ex-intprod}
  Consider the tropical curve $C$ in $L_2^3$ with 
    $$ \P(C) = \{(2,1,0,0),(1,1,0,0),(0,1,1,0),(0,0,1,2),(0,0,1,1)\}. $$
  We have $C \cdot D_1 = -1$, but $C$ is relatively realizable by $f =
  -x_1^2x_2-2x_1x_2^2-x_2^3+x_1^2x_3+x_1x_2x_3 = (x_1+x_2) \cdot
  (-x_1x_2-x_2^2+x_1x_3)$. In this example, we see that the inequality in
  Proposition \ref{prop:intprod} is sharp. Furthermore, it is not a coincidence
  that the polynomial $f$ realizing $C$ is reducible. Either by using our
  algorithm or by applying the obstruction by Brugall\'e and Shaw, we see that
  $C$ is not relatively realizable by an irreducible cycle in $Z_1(E)$.
\end{Example}

\begin{Remark}
  We claimed that Proposition \ref{prop:intprod} and the obstruction given by
  Brugall\'e and Shaw are in fact the same. Example \ref{ex-intprod} shows that
  in case of relative realizability by positive cycles, the given inequality in
  Proposition \ref{prop:intprod} is sharp. Hence, no stronger obstruction is
  possible related to intersection products with classical lines. On the other
  hand, the obstruction by Brugall\'e and Shaw implies Proposition
  \ref{prop:intprod}: Given a tropical curve $C$ in $L_2^3$ fulfilling the
  conditions of Proposition \ref{prop:intprod}, we consider any positive
  decomposition $C = \sum_{i=1}^r C_i + m \cdot D_1$, where $m \leq k$ and $C_i
  \neq D_1$ for all $i = 1,\ldots,r$. Since $C \cdot D_1 < -k$ for $k$ as in
  Proposition \ref{prop:intprod}, we know there is $i \in \{1,\ldots,r\}$ with
  $C_i \neq D_1$ and $C_i \cdot D_1 < 0$. Hence, in any decomposition of $C$,
  there is a component which is not realizable by an irreducible cycle and
  thus, the tropical curve $C$ is not realizable by a positive cycle either.
\end{Remark}

In the proof of Lemma \ref{lemma:intprod} and Proposition \ref{prop:intprod},
we have seen that the non-realizability of the tropical curve $C$ follows from
dependencies between the Newton polytopes $\Newt(f_k)$. In the rest of this
paper it will always be the idea to find suitable dependencies between these
Newton polytopes to prove criteria for realizability. However, we first have to
know how to translate conditions on $C$ to the Newton polytopes $\Newt(p_*^k
(C))$. 

\begin{Lemma} \label{interpretationConditions}
  Let $\{i,j,k,l\} = \{0,1,2,3\}$, and let $C$ be a tropical curve of degree
  $d$ in $L_2^3$ which does not intersect the relative interior of $
  \cone([e_i],[e_j]) $. Then
  \begin{enumerate}
  \item \label{interpretationConditions-a}
    the lattice point corresponding to $x_k^d$ is contained in $\Newt(p_*^l
    (C))$, and
  \item \label{interpretationConditions-b}
    the side of $\Newt(p_*^j (C))$ opposite the vertex corresponding to $
    x_i^d$ has lattice length $m_C(e_i)$.
  \end{enumerate}
\end{Lemma}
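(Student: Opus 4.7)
The plan is to reformulate both parts via the correspondence between $\Newt(p_*^A(C))$ and the rays of $p_*^A(C)$ given by Construction \ref{constr-normfan}, and then to trace the relevant rays back to elements of $P(C)$ using Lemma \ref{lem-pofc}. Concretely, the inner normal cone of $\Delta_d$ at the vertex $x_k^d$, viewed in $\R^3/\gen\one$ with coordinates indexed by $\{i,j,k\}$, is exactly $\cone([e_i],[e_j])$; hence part (\ref{interpretationConditions-a}) is equivalent to the statement that $p_*^l(C)$ has no ray in the relative interior of this cone. Similarly, the inner normal cone of $\Delta_d$ at the edge opposite $x_i^d$, in $\R^3/\gen\one$ with coordinates indexed by $\{i,k,l\}$, is the ray $[e_i]$, so the lattice length in part (\ref{interpretationConditions-b}) equals the multiplicity of $[e_i]$ in $p_*^j(C)$.

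The key technical observation is that every $v \in P(C)$, being the min-$0$ representative of a direction in some two-dimensional cone $\cone([e_s],[e_s+e_t])$ of $L_2^3$, has at most two non-zero coordinates. For part (\ref{interpretationConditions-a}), a putative ray of $p_*^l(C)$ in the relative interior of $\cone([e_i],[e_j])$ in $\R^3/\gen\one$ would, by Lemma \ref{lem-pofc}, come from some $v \in P(C)$ whose projection $(v_i,v_j,v_k)$ satisfies $v_i, v_j > v_k$. The two-coordinate restriction together with $\min v = 0$ then forces $v_k = v_l = 0$ and $v_i, v_j > 0$, i.e., $[v]$ lies in the relative interior of $\cone([e_i],[e_j]) \subset \R^4/\gen\one$; this is excluded by hypothesis, proving (\ref{interpretationConditions-a}).

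For part (\ref{interpretationConditions-b}), a vector $v \in P(C)$ contributing to the ray $[e_i]$ of $p_*^j(C)$ must satisfy $v_k = v_l$ and $v_i > v_k$ after projection via $p^j$. The support restriction rules out $v_k = v_l > 0$, so $v_k = v_l = 0$, and thus $v = v_i\, e_i + v_j\, e_j$ with $v_i > 0$ and $v_j \ge 0$. The hypothesis prohibits $v_j > 0$, so $v = m_C(e_i)\,e_i$ is the only possible contributor. Applying Lemma \ref{lem-pofc} then yields multiplicity $m_C(e_i)$ for the ray $[e_i]$ in $p_*^j(C)$, which is the claimed lattice length.

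The main (though minor) obstacle is the case analysis bounding the support of $v \in P(C)$; once this two-coordinate restriction is in hand, both assertions reduce to identifying elements of $P(C)$ with very specific coordinate patterns, and the hypothesis on $C$ rules out all non-trivial cases.
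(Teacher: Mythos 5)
Your proof is correct and follows essentially the same route as the paper's, which also reduces both parts to identifying which elements of $\P(C)$ can project to the relevant rays and uses the restricted support of such elements; your explicit case analysis supplies a step the paper's proof states without justification (that $p_*^l(C)$ does not meet the relative interior of $\cone([e_i],[e_j])$). The only cosmetic difference is that for part \ref{interpretationConditions-a} the paper concludes by computing $m=0$ in Lemma \ref{lemma:newton} rather than via the inner-normal-fan correspondence you invoke.
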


\begin{proof}
  By applying a coordinate permutation we may assume that $(i,j,k,l) =
  (0,1,2,3)$, \ie $C$ does not intersect the relative interior of
  $\cone([e_0],[e_1])$. Then:
  \begin{enumerate}
  \item The curve $ p_*^3(C) $ does not intersect $\cone([e_0],[e_1]) $ either,
    so with the notation of Lemma \ref{lemma:newton} it follows
    immediately that $ m=0 $ and thus $ Q_0 = (0,0) \in \Newt(p_*^3 (C)) $.
  \item The ray $ \cone([e_0]) $ (if present) is the only one in $C$ projecting
    to $ \cone([e_0]) $ under $ p^1 $, so we have $ m_C(e_0) = m_{p_*^1 (C)}
    (e_0) $. By definition of the Newton polytope, this is just the
    lattice length of the edge of $ \Newt(p_*^1 (C)) $ opposite the vertex
    corresponding to $ x_0^d $.
  \end{enumerate}
\end{proof}

We are now ready to prove the first new obstruction to realizability. This
obstruction considers tropical curves in $L_2^3$ which are completely contained
in three 2-dimensional cones intersecting in a common face. In the following
picture, they are the shaded cones.

\begin{sidepic}{prop-1} \begin{Proposition} \label{prop:commonray}
  Let $i,j,k,l \in \mathbb N$ with $\{i,j,k,l\} = \{0,1,2,3\}$, and let
  $C\subset L_2^3$ be a tropical curve such that
    \[ C \subset \cone ([e_i],[e_j]) \cup \cone([e_i],[e_k]) \cup
       \cone([e_i],[e_l]) \]
  and $ m_C(e_j) = m_C(e_k) = 0 $. Then $C$ can only be realizable in $L$
  if $m_C(e_l) \neq 1$. 
\end{Proposition}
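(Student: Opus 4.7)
The plan is to apply the Newton-polytope machinery of Algorithm \ref{algorithm} and derive a linear relation that forces a coefficient of $f$ which must be non-zero to vanish. By permuting coordinates we may assume $(i,j,k,l) = (0,1,2,3)$, so $C$ lies in $\cone([e_0],[e_1]) \cup \cone([e_0],[e_2]) \cup \cone([e_0],[e_3])$ with $m_C(e_1) = m_C(e_2) = 0$ and $m_C(e_3) = 1$. Assume for contradiction that $C = \Trop(L+(f))$ for a degree-$d$ homogeneous polynomial $f$, where $L = (x_0+x_1+x_2+x_3)$.

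The first step is to determine $\Newt(f_k)$ for each $k$ using Lemma \ref{interpretationConditions} and Lemma \ref{lem-pofc}. The projection $p^0$ collapses each cone $\cone([e_0],[e_j])$ onto $[e_j]$, so $p^0_*(C)$ is $d$ times the standard tropical line and $\Newt(f_0) = \Delta_d$. For $f_3$, the vanishings $m_C(e_1) = m_C(e_2) = 0$ together with $(d,0,0) \in \Newt(f_3)$ (Lemma \ref{interpretationConditions}(a)) force $\Newt(f_3) \cap \{w_1=0\} = \Newt(f_3) \cap \{w_2=0\} = \{(d,0,0)\}$, giving $f_3(x_0,x_1,0) = f_3(x_0,0,x_2) = b_{d,0}\,x_0^d$, where $b_{d,0}$ is the coefficient of $x_0^d$ in $f_3$. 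For $f_2$ the analogous analysis places a vertex at $x_0^d$ and, because $m_C(e_3)=1$ and the primitive lattice direction along $\{w_3 = 0\}$ perpendicular to $[e_3]$ is $(-1,1,0)$, forces the length-$1$ edge on $\{w_3=0\}$ to run from $(d,0,0)$ to $(d-1,1,0)$. Both endpoints are vertices of $\Newt(f_2)$, so $f_2(x_0,x_1,0) = b^{(2)}_{d,0}\,x_0^d + b^{(2)}_{d-1,1}\,x_0^{d-1}x_1$ with both coefficients non-zero.

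Translating via Construction \ref{constr-algproj} yields the identities $f(x_0,x_1,0,-x_0-x_1) = b_{d,0}\,x_0^d$ and $f(x_0,x_1,-x_0-x_1,0) = b^{(2)}_{d,0}\,x_0^d + b^{(2)}_{d-1,1}\,x_0^{d-1}x_1$. Evaluating both sides of each at the common point $(1,0,-1,0)$ (via $f_3(x_0,0,x_2)$ at $(1,-1)$ and $f_2(x_0,0,0)$ at $1$) shows $b^{(2)}_{d,0} = b_{d,0}$. Then I would compute $f_0(x_1,x_2,0) = f(-x_1-x_2,x_1,x_2,0)$ in two ways: on the one hand, substituting $x_0 = -x_1-x_2$ into the second identity gives an expression $(-1)^{d-1}(x_1+x_2)^{d-1}\bigl[(b^{(2)}_{d-1,1}-b^{(2)}_{d,0})x_1 - b^{(2)}_{d,0}x_2\bigr]$, whose coefficient of $x_1^d$ is $(-1)^{d-1}(b^{(2)}_{d-1,1}-b^{(2)}_{d,0})$; on the other hand, $\Newt(f_0) = \Delta_d$ says this same coefficient is non-zero, and the first identity evaluates it directly to $f_0(1,0,0) = f(-1,1,0,0) = (-1)^d b_{d,0}$. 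Equating and using $b^{(2)}_{d,0} = b_{d,0}$ forces $b^{(2)}_{d-1,1} = 0$, contradicting the vertex property of $\Newt(f_2)$.

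The main obstacle is the polytope-combinatorial step: one must verify that $(d,0,0)$ really is an endpoint of the length-$1$ edge on $\{w_3=0\}$, which follows from the fact that $(d,0,0)$ is the unique vertex of $\Newt(f_2)$ in the ``gap'' of the inner normal fan of $p^2_*(C)$ surrounding the missing direction $\relint\cone([e_1],[e_3])$. Once the Newton polytopes are correctly pinned down, the remaining algebra is a direct substitution, and it is reassuring that the argument genuinely uses $m_C(e_l) = 1$: for $m_C(e_l) \ge 2$ the intermediate lattice point $(d-1,1,0)$ would lie in the interior of the edge rather than at its endpoint, so the coefficient $b^{(2)}_{d-1,1}$ would be free to vanish.
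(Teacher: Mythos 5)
Your proof is correct, and it uses the same underlying machinery as the paper's: project to the plane, pin down the Newton polytopes $\Newt(f_k) = \Newt(p^k_*(C))$ via Lemma \ref{interpretationConditions}, and produce a linear identity among coefficients of $f$ that forces a vertex coefficient to vanish. The concrete execution differs in a few pleasant ways. The paper normalizes to $i=1$, $l=3$ and works with only the two polytopes $\Newt(f_3)$ and $\Newt(f_0)$: the first is pinned to a single point on two of its sides, the second to a length-$1$ edge, and the contradiction comes from a single linear combination of evaluations, $f_0(1,-1,0) - f_0(1,0,0) = f_3(0,1,-1) - f_3(-1,1,0)$, in which exactly one term survives. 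You instead normalize to $i=0$, $l=3$ and bring in three projections: you observe that $p^0_*(C)$ is supported on the three coordinate rays, hence by balancing is $d$ times the tropical line and $\Newt(f_0)=\Delta_d$; you then pin $\Newt(f_3)$ to the single point $(d,0,0)$ on its $w_1=0$ and $w_2=0$ sides and $\Newt(f_2)$ to the unit edge from $(d,0,0)$ to $(d-1,1,0)$ on $w_3=0$; finally you compute the $x_1^d$-coefficient of $f_0$ twice (once through $f_3$, once through $f_2$, after first matching $b^{(2)}_{d,0}=b_{d,0}$ via $f(1,0,-1,0)$) and conclude $b^{(2)}_{d-1,1}=0$, contradicting that $(d-1,1)$ is a vertex of $\Newt(f_2)$. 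Both proofs are elementary coefficient-chases; yours is slightly longer but the observation $\Newt(f_0)=\Delta_d$ is a nice shortcut, and your closing remark correctly identifies where the hypothesis $m_C(e_l)=1$ enters (for $m_C(e_l)\ge 2$ the point $(d-1,1,0)$ is no longer forced to be a vertex).

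One minor caveat worth making explicit when writing this up: Lemma \ref{interpretationConditions}(b) expresses $m_C(e_i)$ as the lattice length of the face of $\Newt(p^j_*(C))$ in the inner normal direction $[e_i]$, and you should note (as the paper's Definition \ref{def-newton}\ref{def-newton-c} guarantees) that this face actually lies on the boundary segment $\{w_3=0\}$ of $\Delta_d$, so that the unit edge really does emanate from $(d,0,0)$ along that side; this is what makes $(d-1,1,0)$ a vertex rather than, say, an interior point of a longer edge.
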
 \end{sidepic}

\vspace{-2ex}

\begin{proof}
  By applying a suitable coordinate permutation, we may assume that $i=1$ and
  $l=3$. Assume that $C$ is realizable in $L$ with $m_C(e_3) = 1$. Let $f \in
  K[x_0,x_1,x_2,x_3]$ be homogeneous of degree $d = \deg(C)$ such that $C =
  \Trop(L+(f))$.
  
  First we want to have a closer look at the Newton polytope $\Newt(f_3) =
  \Newt(p_*^3(C))$. Since $C$ does not intersect $\cone([e_0],[e_2])$, we get
  by Lemma \ref{interpretationConditions} \ref{interpretationConditions-a} that
  the lattice point $(0,d)$ corresponding to $x_1^d$ is contained in $
  \Newt(p_*^3 (C))$. Moreover, $C$ does not intersect the relative interiors of
  $\cone([e_0],[e_3])$ and $\cone([e_2],[e_3])$. So $m_C(e_0) = m_C(e_2) = 0$
  implies by Lemma \ref{interpretationConditions}
  \ref{interpretationConditions-b} that $ \Newt(p_*^3(C)) $ meets both $
  \conv((0,0),(0,d))$ and $\conv((0,d),(d,0)) $ in a single point, which must
  therefore be the above point $(0,d)$. Hence, the other points on these two
  lines are not contained in this Newton polytope, \ie using Notation
  \ref{notations:newton} \ref{notations:newton-c} we have
    \[ b_{0,d-r} = b_{r,d-r} = 0 \quad \text{for all $ r=1,\ldots,d $}. \]
  Now consider $\Newt(f_0) = \Newt(p_*^0 (C))$: since $C$ does not intersect
  the relative interior of $\cone([e_2],[e_3])$, we know by Lemma
  \ref{interpretationConditions} \ref{interpretationConditions-a} that the
  lattice point $(d,0)$ corresponding to $x_1^d$ is contained in $\Newt(p_*^0
  (C))$. Moreover, this Newton polytope intersects $\conv((d,0),(0,d))$ with
  lattice length $m_C(e_3) = 1$ by Lemma \ref{interpretationConditions}
  \ref{interpretationConditions-b}. Since $\Newt(p_*^0 (C))$ is convex, this
  means that $(d-r,r) \in \Newt(p_*^0 (C)) \cap \mathbb N^2$ if and only if $r
  \in \{0,1\}$. Hence, we have
  \begin{align*} 
    a_{d-r,r} = 0 &\quad \text{ for all } r=2,\ldots,d, \\
    a_{d-1,1} \neq 0 &.
  \end{align*}
  But we know that $f_0(x_1,x_2,x_3) = f_3(-x_1-x_2-x_3,x_1,x_2)$, so in
  particular it holds that
    $$ f_0(1,-1,0) - f_0(1,0,0) = f_3(0,1,-1) - f_3(-1,1,0). $$
  Considering the coefficients of $f_0$ and $f_3$, this equality results in 
    $$ \sum_{r=1}^d (-1)^r a_{d-r,r}
       = \sum_{r=1}^d (-1)^r b_{0,d-r} - \sum_{r=1}^d (-1)^r b_{r,d-r}. $$
  This is a contradiction, since the above results show that of the terms in
  this equation exactly $ a_{d-1,1} $ is non-zero. Hence, $C$ can only be
  realizable in $L$ if $m_C(e_3) \neq 1$. 
\end{proof}

\begin{Example}
  Let $C$ in $L_2^3$ be the tropical curve with
    $$ \P(C) = \{(4,1,0,0),(0,1,4,0),(0,2,0,3),(0,0,0,1)\}. $$
  Then $C \subset \cone([e_0],[e_1]) \cup \cone([e_1],[e_2]) \cup
  \cone([e_1],[e_3])$ with $ m_C(e_0) =  m_C(e_2) = 0$ and $m_C(e_3) = 1$.
  Applying Proposition \ref{prop:commonray}, we see that $C$ is not realizable
  in $L$.
\end{Example}

A similar obstruction can be proved if the tropical curve is completely
contained in two opposite faces of $L_2^3$.

\vspace{-3ex}

\begin{sidepic}{prop-2} \begin{Proposition} \label{prop:oppositefaces}
  Let $i,j,k,l \in \mathbb N$ such that $\{i,j,k,l\} = \{0,1,2,3\}$, and let
  $C\subset L_2^3$ be a tropical curve such that
    $$ C \subset \cone( [e_i],[e_j] ) \cup \cone([e_k],[e_l] ) $$
  and $m_C(e_i) = m_C(e_j) = m_C(e_k) = 0 $. Then $C$ can only be realizable in
  $L$ if $m_C(e_l) \neq 1$.
\end{Proposition}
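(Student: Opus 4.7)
The proof follows the template of Proposition~\ref{prop:commonray}: assume for contradiction that $C$ is realized by a homogeneous polynomial $f$ of degree $d = \deg(C)$, and derive a contradiction by confronting the shapes of $\Newt(f_0)$ and $\Newt(f_3)$ with the identity $f_0(x_1, x_2, x_3) = f_3(-x_1-x_2-x_3, x_1, x_2)$. After a coordinate permutation one may take $(i, j, k, l) = (0, 1, 2, 3)$, so $C \subset \cone([e_0],[e_1]) \cup \cone([e_2],[e_3])$ with $m_C(e_0) = m_C(e_1) = m_C(e_2) = 0$ and $m_C(e_3) = 1$.

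The first step is to apply Lemma~\ref{interpretationConditions} to each of the four ``mixed'' cones $\cone([e_a],[e_b])$ with $a \in \{0,1\}$ and $b \in \{2,3\}$ (whose relative interiors $C$ avoids). This pins down both Newton polytopes up to interior data: in $\Newt(f_3)$ one obtains $b_{i,0} = 0$ for $i < d$ and $b_{0,j} = 0$ for $j < d$, with $b_{d,0}, b_{0,d} \neq 0$; in $\Newt(f_0)$ one obtains $a_{i,0} = 0$ for $i \geq 1$ and $a_{i,d-i} = 0$ for $i \geq 2$, with $a_{0,0}, a_{0,d}, a_{1,d-1} \neq 0$.

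The crucial structural claim is that $f_3 = b_{d,0}(x_0 + x_1)^d + x_0 x_1 x_2\, r(x_0, x_1, x_2)$ for some homogeneous $r$ of degree $d-3$ (with $r = 0$ when $d = 2$). I will derive it in two moves. First, the identity $a_{0,0} x_3^d = f_0(x_1, 0, x_3) = f_3(-x_1-x_3, x_1, 0)$, combined with the fact that $f_3(x_0, x_1, 0) = \sum_{i+j=d} b_{i,j} x_0^i x_1^j$ is a binary form of degree $d$, forces $f_3(x_0, x_1, 0) = b_{d,0}(x_0+x_1)^d$ after inverting the linear substitution $x_0 \mapsto -x_1 - x_3$; in particular this recovers $b_{0,d} = b_{d,0}$ and $a_{0,0} = b_{d,0}(-1)^d$. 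Hence $f_3 - b_{d,0}(x_0+x_1)^d$ is divisible by $x_2$. Second, setting $x_1 = 0$ and then $x_0 = 0$ in the resulting decomposition and invoking $b_{i,0} = b_{0,j} = 0$ for $i, j < d$ forces the remaining factor to be divisible by both $x_0$ and $x_1$.

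Translating the factorization through $f_0 = f_3(-x_1-x_2-x_3, x_1, x_2)$ yields
\[ f_0 = b_{d,0}(-1)^d (x_2+x_3)^d - x_1 x_2 (x_1+x_2+x_3)\, \tilde r(x_1, x_2, x_3) \]
with $\tilde r(x_1, x_2, x_3) = r(-x_1-x_2-x_3, x_1, x_2)$. Specializing to $x_3 = 0$ and using the hypotenuse conditions $a_{i, d-i} = 0$ for $i \geq 2$ together with the already established $a_{0,d} = b_{d,0}(-1)^d$, the identity collapses to the polynomial identity
\[ (x_1+x_2)\, R(x_1, x_2) = -a_{1, d-1} \, x_2^{d-2} \]
in $K[x_1, x_2]$, where $R(x_1, x_2) = \tilde r(x_1, x_2, 0)$. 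Evaluating both sides at $x_1 = -x_2$ annihilates the left-hand side and forces $a_{1, d-1} = 0$, contradicting $a_{1, d-1} \neq 0$. The argument works uniformly in every characteristic; the main technical obstacle is extracting the factorization of $f_3$ cleanly from the various vanishing conditions.
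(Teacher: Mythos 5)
Your proof is correct. You establish the same Newton-polytope constraints on the coefficients $a_{ij}$ of $f_0$ and $b_{ij}$ of $f_3$ that the paper does (via Lemma~\ref{interpretationConditions}), but you then take a genuinely different route to the contradiction. The paper's proof is more terse: it picks the single linear combination $f_0(1,-1,0)-f_0(0,-1,0)-f_0(1,0,-1)+f_0(0,0,-1) = f_3(0,1,-1)-f_3(1,0,-1)-f_3(0,1,0)+f_3(1,0,0)$ and observes that under the vanishing constraints all terms but $a_{1,d-1}$ drop out. Your argument instead extracts the full structure of $f_3$: you show $f_3(x_0,x_1,0)$ is forced to be the perfect $d$-th power $b_{d,0}(x_0+x_1)^d$, and the edge vanishings then force $f_3 = b_{d,0}(x_0+x_1)^d + x_0x_1x_2\,r$; pushing this through the substitution $f_0(x_1,x_2,x_3)=f_3(-x_1-x_2-x_3,x_1,x_2)$ and restricting to $x_3=0$ gives $(x_1+x_2)R(x_1,x_2)=-a_{1,d-1}x_2^{d-2}$, killed by $x_1\mapsto -x_2$. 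Both arguments work in all characteristics and use exactly the same constraint data, so neither is more general; what your version buys is more structural insight (the forced factorization, showing exactly how the polynomial is over-determined), at the cost of a longer derivation, whereas the paper's clever linear combination is shorter but less transparent about why the obstruction exists. One small point of care you already handled implicitly: the specialization $x_1=-x_2$ is decisive only when $d\ge 2$, and $d=1$ is vacuous here since no balanced curve with $m_C(e_i)=m_C(e_j)=m_C(e_k)=0$, $m_C(e_l)=1$ and degree $1$ exists inside $\cone([e_i],[e_j])\cup\cone([e_k],[e_l])$.
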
 \end{sidepic}

\vspace{-2ex}

\begin{proof}
  We may assume that $(i,j,k,l) = (0,1,2,3)$. Assume moreover that $C$ is
  realizable in $L$ with $m_C(e_3) = 1$. Let $f \in K[x_0,x_1,x_2,x_3] $ be
  homogeneous of degree $ d=\deg(C) $ such that $C = \Trop(L+(f))$.

  We first consider $\Newt(f_3) = \Newt(p_*^3 (C))$. As $C$ intersects neither
  $\cone([e_0],[e_2])$ nor $ \cone([e_1],[e_2])$, we see by Lemma
  \ref{interpretationConditions} \ref{interpretationConditions-a} that the
  lattice points $(d,0)$ and $(0,d)$ corresponding to $ x_0^d $ and $ x_1^d $,
  respectively, are contained in this Newton polytope. Moreover, part
  \ref{interpretationConditions-b} of this lemma shows that $ \Newt(p_*^3 (C))
  $ intersects both $\conv((0,0),(d,0))$ and $\conv((0,0),(0,d))$ with lattice
  length $0$, hence, in the points $(d,0)$ and $(0,d)$, respectively. So with
  Notation \ref{notations:newton} \ref{notations:newton-c} we have
    $$ b_{r,0} = b_{0,r} = 0 \quad \text{for all $ r = 0,\ldots, d-1 $}. $$
  We now have a closer look at $\Newt(f_0) = \Newt(p_*^0 (C))$. As $C$
  does not intersect the relative interiors of $\cone([e_1],[e_2])$ and
  $ \cone([e_1],[e_3]) $, we have $ (0,0),(0,d) \in \Newt(p_*^0 (C)) $. Moreover,
  we see that this Newton polytope meets $\conv((0,0),(d,0))$ in a single
  point and $\conv((d,0),(0,d))$ with lattice length $1$. Hence,
  \begin{align*} 
    a_{r,0} = 0 &\quad \text{for all $ r=1,\ldots,d $}, \\
    a_{d-r,r} = 0 &\quad \text{for all $ r=0,\ldots,d-2 $}, \\
    a_{1,d-1} \neq 0 &.
  \end{align*}
  Since $f_0(x_1,x_2,x_3) = f_3(-x_1-x_2-x_3,x_1,x_2)$, we have
  \begin{align*}
     & f_0(1,-1,0) - f_0(0,-1,0) - f_0(1,0,-1) + f_0(0,0,-1) \\
    =\;& f_3(0,1,-1) - f_3(1,0,-1) - f_3(0,1,0) + f_3(1,0,0),
  \end{align*}
  so considering the coefficients of $f_0$ and $f_3$ we obtain the
  contradiction
    $$ \sum_{r=0}^{d-1} (-1)^r a_{d-r,r} - \sum_{r=1}^{d} (-1)^{d-r} a_{r,0}
       = \sum_{r=0}^{d-1} (-1)^{d-r} b_{0,r} - \sum_{r=0}^{d-1} (-1)^{d-r}
         b_{r,0} $$
  as all terms except $ a_{1,d-1} $ are zero in this equation. Hence, $C$ can
  only be realizable in $L$ if $m_C(e_3) \neq 1$.
\end{proof}

\begin{Example}
  Let $C$ in $L_2^3$ be the tropical curve with 
    $$ \P(C) = \{(3,1,0,0),(1,3,0,0),(0,0,3,1),(0,0,1,2),(0,0,0,1)\}. $$
  Then $C \subset \cone([e_0],[e_1]) \cup \cone([e_2],[e_3])$ with $m_C(e_i) =
  0$ for $i=0,1,2$ and $m_C(e_3) = 1$, so $C$ is not realizable in $L$ by
  Proposition \ref{prop:oppositefaces}. 
\end{Example}

Our next obstruction to realizability, Proposition \ref{prop:oneside} below,
depends on the characteristic of $K$, since the following preparatory lemma
does so.

\begin{sidepic}{lem-bk} \begin{Lemma} \label{lemma:bogartkatz}
  Let $C \subset L_2^3$ be a tropical curve of degree $d$ in $L_2^3 $, and
  assume that $ \ch(K) = 0$ or $ \ch(K) \ge d $. Moreover, let $ c \in
  \{1,\dots,d-1\} $ and set
  \begin{align*}
    A &= \{(d-k,k) : 0\leq k \leq d, k \neq c \} \\
      &\qquad \cup \; \{ (d-1-k,k) : 0 \leq k \leq d-1 \} \quad \text{and} \\
    B &= \{(0,d-k): 0 \leq k \leq d, k \neq c \},
  \end{align*}
\end{Lemma}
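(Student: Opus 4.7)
The plan is to reduce the lemma to a linear identity between coefficients of $f_0$ and $f_3$ in the sense of Notation~\ref{notations:newton}\ref{notations:newton-c}, exactly in the spirit of the proofs of Propositions~\ref{prop:commonray} and \ref{prop:oppositefaces}. First I would translate the hypotheses on $A$ and $B$ into vanishing conditions on the $a_{ij}$ and $b_{ij}$: the assumption encoded by $A$ removes every coefficient of $f_0$ lying on the two top diagonals of $\Delta_d$ except $a_{d-c,\,c}$, while the assumption encoded by $B$ removes every coefficient of $f_3$ on the left edge except $b_{0,\,d-c}$. Then the single algebraic input is the identity
\[
f_0(x_1,x_2,x_3) \;=\; f_3(-x_1-x_2-x_3,\, x_1,\, x_2),
\]
which couples the two Newton polytopes; the whole game is to feed suitable test data into this identity so that only $a_{d-c,\,c}$ and $b_{0,\,d-c}$ survive.

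The key technical step, and the reason for the characteristic hypothesis, is that the obstruction of $A$ only removes a \emph{single} point from the top two diagonals, so a crude evaluation as in Proposition~\ref{prop:commonray} will not isolate $a_{d-c,\,c}$. Instead I would apply a differential operator of the form $\tfrac{\partial^{\,d-c}}{\partial x_1^{\,d-c}}\tfrac{\partial^{\,c-1}}{\partial x_2^{\,c-1}}$ (or an analogous combination tailored to the point $(d-c,c)$) to both sides of the identity, then specialize $x_1=x_2=0$, $x_3=1$. On the $f_0$-side the assumption on $A$ ensures that every surviving monomial is either $x_1^{d-c}x_2^c$ or sits on lower diagonals which are killed by the derivatives; on the $f_3$-side the chain rule $\partial_{x_1}\mapsto -\partial_{x_0}+\partial_{x_1}$, $\partial_{x_2}\mapsto -\partial_{x_0}+\partial_{x_2}$ turns the computation into an evaluation of $f_3$ and its partials at $(-1,0,0)$, where the assumption on $B$ leaves only the coefficient $b_{0,\,d-c}$. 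Comparing the two sides should yield an equality of the form
\[
\lambda\, a_{d-c,\,c} \;=\; \mu\, b_{0,\,d-c}
\]
for explicit scalars $\lambda,\mu\in K$ built from factorials and binomial coefficients of quantities at most $d$; these are invertible precisely under $\ch(K)=0$ or $\ch(K)\geq d$, which is exactly where the hypothesis is used.

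The main obstacle I expect is the bookkeeping in the differentiation step: one has to verify that none of the other $a_{ij}$ or $b_{ij}$ allowed by the hypothesis contributes a stray term under the chosen operator and specialization. This is a purely combinatorial check comparing exponent vectors against the sets $A$ and $B$, but it has to be done carefully, keeping track of which diagonal of $\Delta_d$ each monomial lives on after the substitution $x_0=-x_1-x_2-x_3$. Once this verification is complete, the identity displayed above is the content of the lemma (or, depending on the cut-off conclusion, immediately yields it by rearranging and noting that the scalars $\lambda,\mu$ are nonzero), so there is no further geometric input needed beyond what was already used to derive the vanishing from $A$ and $B$.
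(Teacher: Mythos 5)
You have the right general idea---couple $f_0$ and $f_3$ via $f_0(x_1,x_2,x_3)=f_3(-x_1-x_2-x_3,x_1,x_2)$, then apply a differential operator and specialize to extract a contradiction from the Newton-polytope hypotheses---but several concrete steps in your plan go wrong, and taken together they would not produce a proof.

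First, you misread the hypothesis on $B$. It is $|B\cap\Newt(p_*^3(C))|=1$, \emph{not} $B\cap\Newt(p_*^3(C))=\emptyset$. So the assumption does not ``remove every coefficient of $f_3$ on the left edge except $b_{0,d-c}$''; rather, it says there is exactly one index $t\neq c$ with $(0,d-t)\in\Newt(p_*^3(C))$, and it says nothing about $(0,d-c)$ (which is excluded from $B$ by construction). The whole point of the argument is to isolate \emph{that} surviving coefficient $b_{0,d-t}$ and force it to vanish, contradicting the fact that $(0,d-t)$ is an endpoint of the segment $\Newt(p_*^3(C))\cap\conv((0,0),(0,d))$ and hence a vertex of the Newton polytope. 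Your target $\lambda\,a_{d-c,c}=\mu\,b_{0,d-c}$ with $\lambda,\mu\neq 0$ is not the content of the lemma and is not a contradiction: nothing in the hypotheses says either of these coefficients is nonzero.

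Second, your specialization point is wrong. Setting $x_1=x_2=0$, $x_3=1$ sends $(x_0,x_1,x_2)\mapsto(-1,0,0)$ in the $f_3$-variables, so evaluations of $f_3$ and its partials there pick out coefficients $b_{i,0}$ along the \emph{bottom} edge of $\Newt(f_3)$. The hypothesis on $B$ constrains the \emph{left} edge $b_{0,j}$, so your evaluation point cannot see the hypothesis at all. The paper instead substitutes $(x_1,x_2,x_3)=(1,-1,0)$, which maps to $(x_0,x_1,x_2)=(0,1,-1)$, and it is exactly $x_0=0$ that isolates the left-edge coefficients $b_{0,j}$. Similarly, on the $f_0$ side, setting $x_1=x_2=0$ and applying $\partial_{x_1}^{d-c}\partial_{x_2}^{c-1}$ extracts $a_{d-c,\,c-1}$ (a point of $A$, hence zero by hypothesis), not $a_{d-c,c}$.

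Finally, the paper's operator is much lower order than yours, and the role of the characteristic hypothesis is different from what you anticipate. The paper uses only the first-order operator $\partial_{x_1}-\partial_{x_3}-(d-c)\cdot\mathrm{Id}$, evaluated at $(1,-1,0)$; the factor $(c-k)$ appearing in the resulting sums is \emph{zero} precisely at $k=c$, which cleanly annihilates the two ``allowed'' coefficients $a_{d-c,c}$ and $b_{0,d-c}$ without needing $A$ or $B$ to constrain them. After the $A$ and $B$ hypotheses kill everything else, one is left with $(c-t)\,b_{0,d-t}=0$. The hypothesis $\ch(K)=0$ or $\ch(K)\geq d$ is then used only to guarantee $c-t\neq 0$ in $K$ (since $|c-t|<d$), not to invert factorials. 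Your plan with high-order mixed derivatives $\partial_{x_1}^{d-c}\partial_{x_2}^{c-1}$ would introduce genuine factorials and a large sum of $b_{i,j}$ terms with $i\geq 1$ that the hypothesis on $B$ does not control, so the bookkeeping you flag as ``the main obstacle'' would in fact not close up.
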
 \end{sidepic}

  \textit {as indicated with the black dots in the picture on the right.}

  \textit {If $A \cap \Newt(p_*^0(C)) = \emptyset$ and $ |B \cap \Newt(p_*^3
  (C))| = 1$, then $C$ is not realizable in $L$.}

\begin{proof}
  Assume that $C$ is realizable in $L$, and let $ f \in K[x_0,x_1,x_2,x_3] $ be
  homogeneous of degree $d$ with $C = \Trop(L+(f))$. As $ f_0(x_1,x_2,x_3) =
  f_3(-x_1-x_2-x_3,x_1,x_2) $, we have 
  \begin{align*}
    & \frac{\partial f_0}{\partial x_1}(x_1,x_2,x_3) =
      \left( \frac{\partial f_3}{\partial x_1} -
      \frac{\partial f_3}{\partial x_0} \right) (-x_1-x_2-x_3,x_1,x_2) \\
    \text{and} \quad
    & \frac{\partial f_0}{\partial x_3}(x_1,x_2,x_3) =
      - \frac{\partial f_3}{\partial x_0} (-x_1-x_2-x_3,x_1,x_2).
  \end{align*}
  Thus, we get the equality
  \begin{align*}
    & \frac{\partial f_0}{\partial x_1}(1,-1,0)
      - \frac{\partial f_0}{\partial x_3}(1,-1,0)
      - (d-c)\cdot f_0(1,-1,0) \\
    =\;& \frac{\partial f_3}{\partial x_1}(0,1,-1) - (d-c)\cdot f_3(0,1,-1).
  \end{align*}
  Considering the coefficients of $f_0$ and $f_3$ as in Notation
  \ref{notations:newton} \ref{notations:newton-c}, the above equality reads
    \[ \sum_{k=0}^d (-1)^k (c-k) \, a_{d-k,k}
       - \sum_{k=0}^{d-1} (-1)^k a_{d-1-k,k}
       = \sum_{k=0}^d (-1)^k (c-k) \, b_{0,d-k}. \]
  By assumption this simplifies to
    \[ (c-t) \, b_{0,d-t} = 0, \]
  where $ t \in \{0,\dots,d\} $ with $ t \neq c $ is such that $ B \cap
  \Newt(p_*^3(C)) = \{ (0,d-t) \} $. But $ |c-t| < d $, and hence, $ c-t \neq 0
  \in K $ by our assumption on $ \ch (K) $. This means that $ b_{0,d-t} = 0 $.
  But due to the form of $B$ and the fact that the Newton polytope $\Newt(p_*^3
  (C))$ is convex this implies that $ (0,d-t) \notin \Newt(p_*^3 (C)) $, in
  contradiction to our assumption.
\end{proof}

\vspace{-2ex}

\begin{sidepic}{prop-3} \begin{Proposition} \label{prop:oneside}
  Let $ C \subset L_2^3$ be a tropical curve of degree $d$, and assume that
  $\ch(K) = 0$ or $ \ch(K) \ge d $. Moreover, let $ \{i,j,k,l\} = \{0,1,2,3\} $
  such that $C$ intersects neither $\cone([e_i],[e_j])$, nor $ \cone([e_i],
  [e_i+e_k])$, nor $ \cone([e_i],[e_i+e_l]) $. If
    $$ c_1 := \sum_{a e_j + be_k \in \P(C)} a \neq
       \sum_{a e_i + b e_k \in \P(C)} a =: c_2, $$
  and $ 0 < c_2 < d $, then $C$ is not realizable in $L$.
\end{Proposition}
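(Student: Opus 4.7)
The plan is to reduce the proposition to Lemma \ref{lemma:bogartkatz} by reading off enough of the Newton polytopes $\Newt(p_*^0(C))$ and $\Newt(p_*^3(C))$ from the hypotheses on $C$. By the obvious coordinate symmetry I may assume $(i,j,k,l) = (3,0,1,2)$, and I aim to apply the lemma with $c = d-c_2$, which lies in $\{1,\dots,d-1\}$ by the hypothesis $0 < c_2 < d$. Suppose for a contradiction that $C$ is realizable, with $C = \Trop(L+(f))$ for some homogeneous $f \in K[x_0,x_1,x_2,x_3]$ of degree $d$.

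First I will translate the cone-avoidance hypotheses into information about the two Newton polytopes. Since $C$ avoids $\cone([e_3],[e_0])$ we have $m_C(e_0) = m_C(e_3) = 0$ together with no $03$-plane rays, so Lemma \ref{interpretationConditions} \ref{interpretationConditions-b} (applied once with $(i,j)=(3,0)$ and once with $(i,j)=(0,3)$) shows that $\Newt(p_*^0(C))$ meets the hypotenuse $\{w_0+w_1 = d\}$ of $\Delta_d$ in a single lattice point and $\Newt(p_*^3(C))$ meets the left edge $\{w_0 = 0\}$ in a single lattice point. Using Lemma \ref{lem-pofc} to read off $\P(p_*^0(C))$ and $\P(p_*^3(C))$, applying Lemma \ref{lemma:newton}, and collapsing the resulting sums via Lemma \ref{lem-degree} applied to $C$, a bookkeeping computation identifies these two points as $(c_2, d-c_2)$ and $(0, c_1)$ respectively. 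Since $c_1 \neq c_2$ by hypothesis, the single left-edge point $(0, c_1)$ is distinct from the omitted point $(0, c_2)$ of $B$, so $|B \cap \Newt(p_*^3(C))| = 1$.

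The more delicate step is to verify $A \cap \Newt(p_*^0(C)) = \emptyset$, which in addition requires that $\Newt(p_*^0(C))$ contain no lattice point on the line $\{w_0+w_1 = d-1\}$. Here I use the two remaining hypotheses, that $C$ misses $\cone([e_3],[e_3+e_1])$ and $\cone([e_3],[e_3+e_2])$: these force every $31$-plane ray $[ae_3+be_1] \in \P(C)$ to satisfy $b > a$ and every $32$-plane ray $[ae_3+be_2]$ to satisfy $b > a$. Consequently the two edges of $\Newt(p_*^0(C))$ adjacent to the hypotenuse vertex $(c_2, d-c_2)$ come from projected primitive rays $(0, b', c')$ with $b' > c' \geq 1$ and $(b', 0, a')$ with $b' > a' \geq 1$, so $b' \geq 2$ in either case. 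A direct calculation with the formula of Lemma \ref{lemma:newton} shows that a single primitive step along either edge drops $w_0+w_1$ by $b' \geq 2$, skipping the line $\{w_0+w_1 = d-1\}$, and a brief convexity argument places the polytope's intersection with that line inside an open horizontal strip $\{w_0 \in (c_2-1, c_2)\}$ containing no integer. Lemma \ref{lemma:bogartkatz} (whose characteristic assumption matches ours) now yields that $C$ is not realizable in $L$, contradicting our supposition. The main obstacle is the bookkeeping in the first step: one must carefully align the sort order and relabeling conventions of Lemma \ref{lemma:newton} (which uses coordinates $0,1,2$ on the target of $p^A$) with the coordinates inherited from $p^0$ and $p^3$, and then apply the balancing identity $\sum_{v \in \P(C)} v = d \cdot \mathbf{1}$ to collapse the contributions cleanly into the formulas for the two distinguished vertices.
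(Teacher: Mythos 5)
Your proof is correct and follows essentially the same strategy as the paper: normalize the labeling, identify the unique point of $\Newt(p_*^3(C))$ on the left edge and of $\Newt(p_*^0(C))$ on the hypotenuse via Lemma~\ref{interpretationConditions} and Lemma~\ref{lemma:newton}, and then invoke Lemma~\ref{lemma:bogartkatz}. Your normalization $(i,j,k,l)=(3,0,1,2)$ with $c=d-c_2$ is equivalent to the paper's $(3,0,2,1)$ with $c=c_2$ because, by balancing on the $e_3$-coordinate and the hypothesis that $C$ avoids $\cone([e_0],[e_3])$, the two versions of $c_2$ sum to $d$; so the value of $c$ fed to Lemma~\ref{lemma:bogartkatz} is the same. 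The only substantive difference is in the verification that $A\cap\Newt(p_*^0(C))=\emptyset$: the paper first reduces by convexity to excluding the two lattice points $(d-c-1,c)$ and $(d-c,c-1)$, then rules each out by a contradiction argument pulling a hypothetical edge back to a ray of $C$ in a forbidden cone, whereas you argue directly that the cone conditions force the two edges of $\Newt(p_*^0(C))$ at the hypotenuse vertex to have primitive directions dropping $w_0+w_1$ by at least $2$, so the polytope skips the second row entirely. Both arguments hinge on the same translation of the cone-avoidance hypotheses into slope constraints on the boundary edges near the hypotenuse vertex; yours is a bit more geometric and avoids the indirect reduction, while the paper's is a bit more explicit about which forbidden ray would arise.
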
 \end{sidepic}

\vspace{-1ex}

\begin{proof}
  We may assume that $ (i,j,k,l) = (3,0,2,1) $. The idea of the following proof
  is to show that Lemma \ref{lemma:bogartkatz} can be applied with $ c := c_2
  $. So let us determine the intersection of the Newton polytopes $
  \Newt(p_*^0(C)) $ and $ \Newt(p_*^3 (C)) $ with the sets $A$ and $B$ of this
  lemma, respectively.
  
  As $C$ does not intersect $ \cone([e_0],[e_3])$, we know by Lemma
  \ref{interpretationConditions} \ref{interpretationConditions-b} that
  $\Newt(p_*^3 (C))$ meets $\conv((0,0),(0,d))$ in a single point. Lemma
  \ref{lemma:newton} tells us that this point is $ Q_0 = (0,m) $ with
    \[ m = \sum_{a e_0 + b e_1 \in \P(C)} a
         = d - \sum_{a e_0 + b e_2 \in \P(C)} a = d - c_1, \]
  where the second equality follows from Lemma \ref{lem-degree} together with
  the fact that $C$ does not intersect $ \cone([e_0],[e_3])$. Since $ c_1 \neq
  c_2 = c $ by assumption, this implies that $ B \cap \Newt(p_*^3 (C)) =
  \{(0,d-c_1)\} $, \ie $ |B \cap \Newt(p_*^3 (C))|=1 $.

  In the same way, we see that $ \Newt(p_*^0(C)) $ intersects $
  \conv((d,0),(0,d)) $ in the single point $ (d-c_2,c_2) = (d-c,c) $. We will
  now prove that $ (d-1-k,k) \notin \Newt(p_*^0(C)) $ for all $k=0,\ldots,d-1$.
  As $C$ is of degree $d$ and the Newton polytope is convex, it suffices
  to show that $(d-c-1,c) \notin \Newt(p_*^0 (C))$ and $(d-c,c-1) \notin
  \Newt(p_*^0 (C))$. Let us assume first that $(d-c-1,c) \in \Newt(p_*^0 (C))$.
  Since $(d-(c-1),c-1) \notin \Newt(p_*^0 (C))$ and $\Newt(p_*^0 (C))$ touches 
  $\conv((0,0),(0,d))$, there is an edge of $ \Newt(p_*^0(C)) $ from $(d-c,c)$
  to a point $(a,b)$ with $b \geq c$ and $ a + b < d$. So the Newton polytope
  $\Newt(p_*^0 (C))$ contains an edge with directional vector $(a+c-d,b-c)$.
  This means that 
    $$ \cone([c-b,a+c-d,0]) = \cone([d-a-b,0,d-a-c]) $$ 
  is contained in $ p_*^0(C) $, and thus
    $$ \sigma = \cone([0,d-a-b,0,d-a-c]) $$
  is contained in $C$. But since $ b \geq c $ we have $\sigma \subset
  \cone([e_3],[e_3+e_1])$, in contradiction to the assumption. Similarly, one
  shows that $(d-c,c-1) \notin \Newt(p_*^0(C))$, because otherwise $C$ would
  contain a $1$-dimensional cone in $ \cone([e_3],[e_3+e_2]) $. Altogether this
  means that $ A \cap \Newt(p_*^0(C)) = \emptyset $.

  The statement of the proposition now follows from Lemma
  \ref{lemma:bogartkatz}.
\end{proof}

In \cite{bogartkatz}, Bogart and Katz gave another obstruction to realizability
in the case $ \ch(K)=0 $: If a tropical curve $C$ in $ L^3_2 $ is also
contained in a classical plane $H$ (\ie in a tropical surface which is at the
same time a vector subspace of $ \RR^4/\gen\one $), they proved that $C$ can
only be realizable if it contains a classical line or is a multiple of the
tropical intersection $L_2^3 \cdot H$ (see Lemma \ref{lem-plane}). We will now
reprove this obstruction by applying Proposition \ref{prop:oneside}, thereby
showing that the statement is also true if $ \ch(K) \ge \deg(C) $.

\begin{Proposition} \label{prop:bogartkatz} \cite[Proposition 1.3]{bogartkatz}
  Let $C \subset L_2^3$ be a tropical curve of degree $d$ contained in a
  classical plane $H$, and assume that $ \ch(K)=0 $ or $ \ch(K) \ge d $. If $C$
  is realizable in $L$, then one of the following must hold:
  \begin{enumerate}
  \item \label{prop:bogartkatz-a}
    There is $m \in \mathbb Q_{>0}$ such that $\P(C) = \{ m \cdot v : v \in
    \P(L_2^3 \cdot H)\}$, in which case we call $C$ a multiple of the tropical
    intersection $L_2^3 \cdot H$.
  \item \label{prop:bogartkatz-b}
    The tropical intersection $L_2^3 \cdot H$ contains a classical line.
  \end{enumerate}
\end{Proposition}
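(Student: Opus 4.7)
The plan is to assume $C$ is realizable with neither (a) nor (b) holding, and then derive a contradiction via Proposition~\ref{prop:oneside}. Write $H = \{a_0 x_0 + \cdots + a_3 x_3 = 0\}$ with $\sum a_i = 0$. By Lemma~\ref{lem-plane}, the rays of $L_2^3 \cdot H$ are $v_{ij} = a_i e_j - a_j e_i$ (for $a_i > 0, a_j < 0$) and $d \cdot e_i$ (for $a_i = 0$), and since $C \subset H \cap L_2^3$ each ray of $C$ lies along one of these. Thus $\P(C)$ is determined by the $e_p$-coefficients $\alpha_{pq}$ of its rays along each direction $v_{pq}$, subject to the balancing condition of Lemma~\ref{lem-degree}.

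I would argue by case analysis on the sign pattern of $(a_0,\dots,a_3)$ up to permutation and overall sign change. For the sign patterns $(+,-,0,0)$, $(+,+,-,0)$, and $(+,+,+,-)$, a direct balancing computation (using the slope relations $\beta_{pq} = \alpha_{pq}\, a_p / |a_q|$ on each ray) determines $\P(C)$ uniquely up to the choice of degree $d$, forcing $C$ to be a rational multiple of $L_2^3 \cdot H$; so condition (a) holds automatically. Only the sign pattern $(+,+,-,-)$ requires further work. Permuting coordinates, I may assume $a_0, a_1 > 0 > a_2, a_3$ with $a_0 \le a_1$ and $|a_2| \le |a_3|$.

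Writing each ray of $C$ in $\cone([e_p],[e_q])$ as $\alpha_{pq} e_p + \beta_{pq} e_q$, the balancing equations leave one free parameter (say $\alpha_{02}$), and a short calculation shows that $C$ is a multiple of $L_2^3 \cdot H$ precisely when $\alpha_{02} = d|a_2|/(a_0+a_1)$. A direct computation in $\RR^4/\langle\one\rangle$ shows that $L_2^3 \cdot H$ contains a classical line precisely when the positive values $a_0, a_1$ coincide with the absolute values $|a_2|, |a_3|$ in some pairing; under my WLOG this reduces to $a_0 = |a_2|$ (which automatically forces $a_1 = |a_3|$ from $a_0+a_1=|a_2|+|a_3|$). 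So denying both (a) and (b) gives $a_0 \ne |a_2|$.

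If $a_0 < |a_2|$ (so automatically $a_1 > |a_3| \ge |a_2|$), I apply Proposition~\ref{prop:oneside} with $(i,j,k,l) = (1,0,2,3)$: the absence of rays of $C$ in $\cone([e_1],[e_0])$ follows from $a_0 a_1 > 0$; the absence in $\cone([e_1],[e_1+e_2])$ and $\cone([e_1],[e_1+e_3])$ from the slope inequalities $a_1 > |a_2|$ and $a_1 > |a_3|$; and $c_1 = \alpha_{02} \ne \alpha_{12} = c_2$ precisely because $C$ fails to be a multiple of $L_2^3 \cdot H$, with $0 < c_2 < d$ extractable from the explicit balancing expressions. This contradicts the assumed realizability of $C$. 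The symmetric case $a_0 > |a_2|$ is handled by taking $(i,j,k,l) = (3,2,0,1)$. The main technical obstacle I foresee is verifying that $c_2$ stays inside the open interval $(0,d)$ and that the required cones remain empty in the degenerate subcases where some $\alpha_{pq}$ vanishes (so not all four rays of $L_2^3 \cdot H$ appear in $\P(C)$); this will require checking feasibility of the balancing system case by case and possibly an alternate choice of $(i,j,k,l)$ in each boundary scenario.
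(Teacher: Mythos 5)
Your argument breaks down at the sign pattern $(+,-,0,0)$, \ie the case where two of the coefficients $a_i$ vanish. After permuting and rescaling take $a=(1,-1,0,0)$, so $H=\{x_0=x_1\}$. Then $H$ contains the entire two-dimensional cone $\cone([e_2],[e_3])$ of $L_2^3$ --- this is exactly the non-transversal situation --- and $L_2^3\cap H$ is the union of that whole cone with the single ray $\Span_{\ge 0}\{[e_0+e_1]\}$. Your opening claim that ``since $C\subset H\cap L_2^3$ each ray of $C$ lies along one of [the rays of $L_2^3\cdot H$]'' is therefore false here: $C$ can have rays in the interior of $\cone([e_2],[e_3])$, such as $(0,0,1,1)$, that are not rays of $L_2^3\cdot H$. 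Balancing only pins down the multiplicity $d$ of the ray along $[e_0+e_1]$ and forces the rays inside $\cone([e_2],[e_3])$ to have coordinates summing to $(0,0,d,d)$; this leaves many degrees of freedom, so $\P(C)$ is far from unique and condition~\ref{prop:bogartkatz-a} is not automatic. Proposition~\ref{prop:oneside} cannot fill the gap either, because a curve in $\cone([e_0],[e_1])\cup\cone([e_2],[e_3])$ will generically meet the cones that its hypotheses must exclude. So the non-transversal case is left with no argument at all.

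The paper isolates this non-transversal case and handles it with Proposition~\ref{prop:intprod} rather than Proposition~\ref{prop:oneside}: if $C$ is not a multiple of $L_2^3\cdot H$ then some ray $(b_k,c_k,0,0)$ with $b_k,c_k>0$ must appear in $\P(C)$, which makes the intersection product $C\cdot D_1$ with the classical line $D_1$ too negative. For the transversal sign patterns your reasoning parallels the paper's: balancing forces~\ref{prop:bogartkatz-a} for $(+,+,+,-)$ and $(+,+,-,0)$, and $(+,+,-,-)$ reduces to Proposition~\ref{prop:oneside}. The paper, however, avoids your two-way split $a_0<|a_2|$ versus $a_0>|a_2|$ by normalizing so that $|a_0|=\max_i|a_i|$, and it also carries out the verification you flag as an open obstacle --- namely that $c_1\ne c_2$ and $0<c_2<d$ --- by showing that the multiplicities $\lambda_{02}$ and $\lambda_{03}$ of the corresponding rays are strictly positive.
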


\begin{proof}
  As $H$ is a tropical variety, there is $a \in \mathbb Z^4 \setminus \{0\}$
  with $ a_0+a_1+a_2+a_3 = 0$ such that $H = \{ [x] \in \mathbb R^4 / \gen\one:
  a \cdot x = 0 \}$. We assume that neither the tropical intersection $ L_2^3
  \cdot H$ contains a classical line nor $C$ is a multiple of $ L_2^3 \cdot H$,
  and show that in this case $C$ is not realizable in $L$. The cases of
  transversal and non-transversal intersection of $L^3_2$ with $H$ will in the
  following be handled separately.

  If $L_2^3$ and $H$ do not intersect transversally, then $H$ contains a
  maximal cone of $L_2^3$. Without loss of generality we may assume $ H = \{
  [x] \in \mathbb R^4/\gen\one : x_2 = x_3 \}$. Then the geometric intersection
  is given by
    $$ L_2^3 \cap H = \cone([e_0],[e_1]) \cup \cone([e_2+e_3]), $$
  so $\P(C)$ has the form $\P(C) = \{(b_1,c_1,0,0),\ldots,(b_r,c_r,0,0),
  (0,0,d,d)\}$, where $d = \deg(C)$ and $\sum_k b_k = \sum_k c_k = d $. Since
  $C$ was assumed not to be a multiple of the tropical intersection
    $$ L_2^3 \cdot H = \cone([e_0]) \cup \cone([e_1]) \cup \cone([e_2+e_3])
    $$
  (see Lemma \ref{lem-plane}), there is $k \in \{1,\ldots,r\}$ with $a_k > 0$
  and $b_k > 0$. Let $D_1 \subset L_2^3$ be the classical line with $\P(D_1) =
  \{(1,1,0,0),(0,0,1,1)\}$. Then by Construction \ref{constr-l32} we have $C
  \cdot D_1 = d - \sum_{k=1}^r \min\{a_k,b_k\} - d < 0$, and since $C$ does not
  contain $D_1$ by assumption, the tropical curve $C$ is not realizable in $L$
  by Proposition \ref{prop:intprod}.

  We now assume that $L_2^3$ and $H$ intersect transversally, so that $C$ is
  contained in $ D := L_2^3 \cdot H $ as a set. By possibly replacing $a$ by
  $-a$ we may assume that $|\{i: a_i > 0 \}| \leq |\{ i : a_i < 0 \}|$. In
  particular, we have $|\{ i : a_i > 0 \}| \in \{1,2\}$. We consider the
  following two cases:

  \textbf{Case 1:} $|\{i: a_i > 0\}| = 1$, without loss of generality $ a_0 > 0
  $. By Lemma \ref{lem-plane} we know that 
    $$ \P(D) = \{ (-a_1,a_0,0,0),(-a_2,0,a_0,0),(-a_3,0,0,a_0)\}. $$
  But the only balanced curves supported on these three rays are multiples of
  $D$. So we arrive at a contradiction to our assumption that $C$ is not such a
  multiple.

  \textbf{Case 2:} $|\{i: a_i > 0 \}| = 2$, and hence, also $|\{i: a_i < 0 \}|
  = 2$. By a coordinate permutation and possibly replacing $a$ by $-a$ we can
  assume that $ |a_0| = \max \{ |a_i|: i=0,\dots,3\} $ as well as $ a_0,a_1>0 $
  and $ a_2,a_3<0 $. Then Lemma \ref{lem-plane} tells us that
    \[ P(D) =
        \{ (-a_2,0,a_0,0),(-a_3,0,0,a_0),(0,-a_2,a_1,0),(0,-a_3,0,a_1)\}, \]
  and thus $\P(C)$ is given by
    \[ P(C) = \left\{\lambda_{02} (-a_2,0,a_0,0), \lambda_{03} (-a_3,0,0,a_0),
         \lambda_{12} (0,-a_2,a_1,0), \lambda_{13} (0,-a_3,0,a_1) \right\} \]
  for some $\lambda_{ij} \in \mathbb Q_{\ge 0} $, where by abuse of notation we
  allow some of these coefficients to be zero. Note however that $ \lambda_{02}
  $ cannot be zero: otherwise Lemma \ref{lem-degree} implies
    \[ - \lambda_{03} a_3 = d = \lambda_{03} a_0 + \lambda_{13} a_1 \]
  by adding the $0$-th and last coordinates of these vectors, respectively.
  This means that $ \lambda_{03} (a_0+a_3) + \lambda_{13} a_1 = 0 $, which is
  only possible if $ \lambda_{13}=0 $ as well because $ a_0+a_3 \ge 0 $ and
  $ a_1 > 0 $. But then $C$ consists of only two rays and can thus only be
  balanced if it is a multiple of a classical line, which we excluded. This
  contradiction shows that $ \lambda_{02} > 0 $. Of course, by symmetry we
  then get $ \lambda_{03} > 0 $ as well.

  We want to use Proposition \ref{prop:oneside} with $ (i,j,k,l)=(0,1,2,3) $ to
  show that $C$ is not realizable in $L$. So let us check that the assumptions
  of this proposition are met. It is obvious that $C$ does not intersect $
  \cone([e_0],[e_1]) $. We now claim that $C$ meets neither $
  \cone([e_0],[e_0+e_2])$ nor $\cone([e_0],[e_0+e_3])$. This is equivalent to
  claiming that $ a_0+a_2>0 $ and $ a_0+a_3>0 $. Assume this is not the case,
  so without loss of generality we assume $ a_0+a_2=0 $. But then $D$ contains
  the classical line $ \Span([1,0,1,0]) $, which is a contradiction to our
  assumption.

  So, to apply Proposition \ref{prop:oneside}, the only thing left to show is
  that with
    \[ c_1 := \sum_{ae_1+be_2 \in \P(C)} a = -\lambda_{12} a_2
       \quad\text{and}\quad
       c_2 := \sum_{ae_0+be_2 \in \P(C)} a = -\lambda_{02} a_2 \]
  we have $ c_1 \neq c_2 $ and $ c_2 \notin \{0,d\} $. If $c_1 = c_2$ we get
  $ \lambda_{02} = \lambda_{12} $, and hence, by balancing also $ \lambda_{02}
  = \lambda_{03} = \lambda_{12} = \lambda_{13} $. In this case $C$ would be a
  multiple of $D$, which we excluded. As $ \lambda_{02} > 0 $ we have $ c_2
  \neq 0 $, and since $ d = c_2 - \lambda_{03} a_3 $ by Lemma
  \ref{lem-degree} and $ \lambda_{03} > 0 $, we also have $ c_2 \neq d $.
  So $C$ fulfills all the conditions of Proposition \ref{prop:oneside} and thus
  is not realizable in $L$.
\end{proof}

\begin{Example}
  Proposition \ref{prop:oneside} is more general than the obstruction by
  Bogart and Katz in \cite{bogartkatz}, even in characteristic $0$. Consider
  for instance the tropical curve $C$ of degree $5$ in $L_2^3$ with 
    $$ \P(C) = \{(1,0,2,0),(2,0,3,0),(0,1,0,2),(0,1,0,3),(2,3,0,0)\}. $$
  It does not lie in a classical plane, so the obstruction of Bogart and Katz
  cannot be applied. But $C$ is not realizable in $L$ by Proposition
  \ref{prop:oneside} with $ (i,j,k,l)=(0,3,1,2) $ if $ \ch(K)=0 $ or $ \ch(K)
  \ge 5 $: we have $ c_1 = 5 $, $ c_2 = 2 $, and
    \[ C \cap \cone([e_0],[e_3]) = C \cap \cone([e_0],[e_0+e_1])
       = C \cap \cone([e_0],[e_0+e_2]) = \emptyset. \]
\end{Example}

\begin{example}[Non-realizable curves of small degree] \label{ex-table}
  For characteristic $0$, the following table contains a complete list, as
  obtained by Algorithm \ref{algorithm}, of all tropical curves in $L_2^3$ (up
  to symmetry by coordinate permutations) which have degree at most $3$ and are
  not realizable in $L$. To get a feeling of the power of each obstruction
  presented, we always indicate by which obstruction the non-realizability may
  be proved.

  \pagebreak[3]
  \begin{center} \begin{footnotesize} \begin{longtable}{||c||c|c|c|c|c||}
    $\P(C)$ & Brugall\'e-Shaw & \ref{prop:intprod} &
              \ref{prop:commonray} & \ref{prop:oppositefaces} &
              \ref{prop:oneside} \\ \hline
    $ \{(2,2,0,0),(0,0,2,1),(0,0,0,1)\} $ &
      \checkmark &\checkmark&&\checkmark& \\
    $ \{(3,3,0,0),(0,0,3,2),(0,0,0,1)\} $ &
      \checkmark &\checkmark&&\checkmark& \\
    $ \{(3,3,0,0),(0,0,3,1),(0,0,0,2)\} $ &
      \checkmark&\checkmark&&& \\
    $ \{(3,3,0,0),(0,0,2,1),(0,0,1,2)\} $ &
      \checkmark&\checkmark&&& \\
    $ \{(3,3,0,0),(0,0,2,1),(0,0,1,1),(0,0,0,1)\} $ &
      \checkmark&\checkmark&&\checkmark& \\
    $ \{(3,3,0,0),(0,0,2,1),(0,0,1,0),(0,0,0,2)\} $ &
      \checkmark&\checkmark&&& \\
    $ \{(3,2,0,0),(0,1,1,0),(0,0,2,2),(0,0,0,1)\} $ &
      \checkmark&\checkmark&&& \\
    $ \{(3,2,0,0),(0,1,0,2),(0,0,3,1) \} $ &
      \checkmark&&&&\checkmark \\
    $ \{(3,2,0,0),(0,1,0,1),(0,0,3,2) \} $ &
      \checkmark&\checkmark&&& \\
    $ \{(3,2,0,0),(0,1,0,1),(0,0,2,1),(0,0,1,1) \} $ &
      \checkmark&\checkmark&&& \\
    $ \{(3,2,0,0),(0,1,0,0),(0,0,3,2),(0,0,0,1) \} $ &
      \checkmark&\checkmark&&& \\
    $ \{(3,2,0,0),(0,1,0,0),(0,0,2,2),(0,0,1,0),(0,0,0,1) \} $ &
      \checkmark&\checkmark&&& \\
    $ \{(3,2,0,0),(0,1,0,0),(0,0,2,1),(0,0,1,2) \} $ &
      \checkmark&\checkmark&&\checkmark& \\
    $ \{(3,2,0,0),(0,1,0,0),(0,0,2,1),(0,0,1,1),(0,0,0,1)\} $ &
      \checkmark&\checkmark&&& \\
    $ \{(3,1,0,0),(0,1,3,0),(0,1,0,2),(0,0,0,1)\} $ &
      &&\checkmark&& \\
    $ \{(2,2,0,0),(1,0,0,0),(0,1,0,0),(0,0,2,1),(0,0,1,2) \} $ &
      \checkmark&\checkmark&&& \\
    $ \{(2,1,0,0),(1,2,0,0),(0,0,2,1),(0,0,1,2) \} $ &
      \checkmark&\checkmark&&& \\
    $ \{(2,1,0,0),(1,2,0,0),(0,0,2,1),(0,0,1,1),(0,0,0,1) \} $ &
      \checkmark&\checkmark&&\checkmark&
  \end{longtable} \end{footnotesize} \end{center}

  In the list above, we indicated that the non-realizability of the tropical
  curve $C$ with $\P(C) = \{(3,2,0,0),(0,1,0,2),(0,0,3,1) \}$ can be proved
  using the obstruction by Brugall\'e-Shaw. This results from the fact that $C
  \cdot D < 0$, where $D$ is the tropical curve in $L_2^3$ with $\P(D) =
  \{(6,4,0,0),(0,2,5,0),(0,0,1,3),(0,0,0,3)\}$. Using for instance Algorithm
  \ref{algorithm}, one sees that $D$ is realizable in $L$.

  Contrary to this observation, one can show that if $C$ is the tropical curve
  in $L_2^3$ with $\P(C) = \{(3,1,0,0),(0,1,3,0),(0,1,0,2),(0,0,0,1)\}$, then
  $C \cdot D \geq 0$ for all tropical curves $D$ in $L_2^3$. Hence, the
  obstruction by Brugall\'e-Shaw cannot be used to prove the non-realizability
  of $C$. 

  In degree $4$ there are 138 tropical curves (up to coordinate permutations)
  in $L_2^3$ which are not realizable in $L$ if $\ch(K) = 0$. We see in the
  list above that Proposition \ref{prop:intprod} is a strong obstruction. In
  fact, among these $138$ non-realizable degree-$4$ curves in $L_2^3$ there are
  only $22$ curves whose non-realizability cannot be proved using Proposition
  \ref{prop:intprod}. We are now listing these $22$ curves with indications
  which obstruction can be used to prove the non-realizability.

  \pagebreak[3]
  \begin{center} \begin{footnotesize} \begin{longtable}{||c||c|c|c||}
    $\P(C)$ & \ref{prop:commonray} &
              \ref{prop:oppositefaces} &
              \ref{prop:oneside} \\ \hline
    $ \{ (4,3,0,0),(0,1,0,3),(0,0,4,1) \} $ &
      && \checkmark \\
    $ \{ (4,3,0,0),(0,1,0,3),(0,0,3,1),(0,0,1,0)\} $ &
      && \checkmark \\
    $ \{ (4,3,0,0),(0,1,0,2),(0,0,4,1),(0,0,0,1)\} $ &
      &&\checkmark \\
    $ \{ (4,2,0,0),(0,2,0,3),(0,0,4,1) \} $ &
      &&\checkmark \\
    $ \{ (4,2,0,0),(0,2,0,2),(0,0,3,2),(0,0,1,0)\} $ &
      && \\
    $ \{ (4,2,0,0),(0,2,0,2),(0,0,3,1),(0,0,1,1)\} $ &
      && \\
    $ \{ (4,2,0,0),(0,2,0,1),(0,0,4,2),(0,0,0,1)\} $ &
      && \\
    $ \{ (4,2,0,0),(0,1,4,0),(0,1,0,3),(0,0,0,1)\} $ &
      \checkmark&& \\
    $ \{ (4,1,0,0),(0,3,0,2),(0,0,3,2),(0,0,1,0)\} $ &
      && \\
    $ \{ (4,1,0,0),(0,3,0,1),(0,0,3,2),(0,0,1,1)\} $ &
      && \\
    $ \{ (4,1,0,0),(0,2,0,3),(0,1,4,0),(0,0,0,1)\} $ &
      \checkmark&& \\
    $ \{ (4,1,0,0),(0,1,4,0),(0,1,0,3),(0,1,0,0),(0,0,0,1)\} $ &
      \checkmark&& \\
    $ \{ (4,1,0,0),(0,1,4,0),(0,1,0,2),(0,1,0,1),(0,0,0,1)\} $ &
      \checkmark&& \\
    $ \{ (4,1,0,0),(0,1,3,0),(0,1,1,0),(0,1,0,3),(0,0,0,1)\} $ &
      \checkmark&& \\
    $ \{ (3,2,0,0),(1,1,0,0),(0,1,0,2),(0,0,3,1),(0,0,1,1) \} $ &
      && \\
    $ \{ (3,2,0,0),(1,0,3,0),(0,2,0,3),(0,0,1,1)\} $ &
      && \checkmark \\
    $ \{ (3,2,0,0),(1,0,2,0),(0,2,0,3),(0,0,1,1),(0,0,1,0)\} $ &
      && \checkmark\\
    $ \{ (3,2,0,0),(1,0,2,0),(0,1,0,2),(0,1,0,1),(0,0,2,1)\} $ &
      && \\
    $ \{ (3,2,0,0),(1,0,0,3),(0,2,3,0),(0,0,1,1)\} $ &
      && \checkmark\\
    $ \{ (3,2,0,0),(1,0,0,2),(0,2,1,0),(0,0,2,1),(0,0,1,1)\} $ &
      && \\
    $ \{ (3,2,0,0),(1,0,0,1),(0,2,1,0),(0,0,3,2),(0,0,0,1)\} $ &
      && \\
    $ \{ (3,1,0,0),(1,3,0,0),(0,0,3,1),(0,0,1,2),(0,0,0,1)\} $ &
      &\checkmark& \\
  \end{longtable} \end{footnotesize} \end{center}
\end{example}

\begin{example}[Realizability depends on $ \ch(K) $] \label{ex-char}
  Some of our criteria for realizability in this section had a dependence on
  the characteristic of $K$. The following example shows that the relative
  realizability of a tropical curve in $ L^3_2 $ may in fact depend on $ \ch(K)
  $: consider the curves $C,D$ in $L_2^3$ with
  \begin{align*}
    \P(C) &= \{(0,0,3,1),(0,1,0,2),(3,2,0,0)\}, \\
    \P(D) &= \{(0,0,1,0),(0,0,2,1),(0,1,0,2),(3,2,0,0)\}.
  \end{align*}
  For $ \ch(K)=0 $ we see in the list above that $C$ is not realizable in $L$,
  but $D$ is. However, using our algorithm we get for $\ch(K) = 2$ that $C$ is
  realizable in $L$, while $D$ is not. Hence, the realizability in $L$ depends
  on the characteristic of $K$.
\end{example}


  \bibliographystyle{amsalpha}
  \bibliography{references}
\end{document}